\theoremstyle{definition}
\newtheorem{ntn}{Notation}[section]
\newtheorem{dfn}[ntn]{Definition}
\theoremstyle{plain}
\newtheorem{lem}[ntn]{Lemma}
\newtheorem{prp}[ntn]{Proposition}
\newtheorem{thm}[ntn]{Theorem}
\newtheorem{cor}[ntn]{Corollary}
\theoremstyle{definition}
\newtheorem{rem}[ntn]{Remark}
\newtheorem{exa}[ntn]{Example}
\numberwithin{equation}{section}
\newcommand{\N}{\mathbb{N}}
\newcommand{\z}{\mathbb{Z}}
\newcommand{\q}{\mathbb{Q}}
\newcommand{\C}{\mathbb{C}}
\newcommand{\F}{\mathbb{F}}
\newcommand{\Eb}{\mathbb{E}}
\newcommand{\EE}{\mathcal{E}}
\newcommand{\Aa}{\mathcal{A}}
\newcommand{\BB}{\mathcal{B}}
\newcommand{\GG}{\mathcal{G}}
\newcommand{\WW}{\mathcal{W}}
\newcommand{\RP}{\mathcal{RP}}
\newcommand{\RB}{\mathcal{RB}}
\newcommand{\OO}{\mathcal{O}}
\newcommand{\LL}{\mathcal{L}}
\renewcommand{\aa}{{A^\times}}
\newcommand{\tors}{{{\rm Tor}_1^{\z}}}
\newcommand{\half}{{\Big[\frac{1}{2}\Big]}}
\newcommand{\pth}{{\Big[\frac{1}{p}\Big]}}
\newcommand{\third}{{\Big[\frac{1}{3}\Big]}}
\newcommand{\mt}{\mapsto}
\newcommand{\lan}{\langle}
\newcommand{\ran}{\rangle}
\newcommand{\se}{\subseteq}
\newcommand{\arr}{\rightarrow}
\newcommand{\larr}{\longrightarrow}
\newcommand{\harr}{\hookrightarrow}
\newcommand{\two}{\twoheadrightarrow}
\newcommand{\Lan}{\langle\! \langle}
\newcommand{\Ran}{\rangle \!\rangle}
\newcommand{\stabe}{{\rm Stab}}
\newcommand{\GL}{{\rm GL}}
\newcommand{\PGL}{{\rm PGL}}
\newcommand{\PB}{{\rm PB}}
\newcommand{\PT}{{\rm PT}}
\newcommand{\PN}{{\rm PN}}
\newcommand{\SL}{{\rm SL}}
\newcommand{\PSL}{{\rm PSL}}
\newcommand{\SM}{{\rm SM}}
\newcommand{\PSM}{{\rm PSM}}
\newcommand{\GW}{{\rm GW}}
\newcommand{\Ind}{{\rm Ind}}
\newcommand{\Bb}{{\rm B}}
\newcommand{\Tt}{{\rm T}}
\renewcommand{\char}{{\rm char}}
\newcommand{\im}{{\rm im}}
\newcommand{\ind}{{\rm ind}}
\newcommand{\inc}{{\rm inc}}
\newcommand{\id}{{\rm id}}
\newcommand{\GE}{{\rm GE}}
\newcommand {\mtx}[2]
{\left(\!\!\!
\begin{array}{cc}
#1   \\
#2 
\end{array}
\!\!\!\right)}
\newcommand {\mtxx}[4]
{\left(\!\!
\begin{array}{cc}
\!\!#1 & \!\!#2   \\
\!\!#3 & \!\!#4
\end{array}\!\!
\right)
}
\newtheoremstyle{athm}
{}
{}
{\itshape}
{}
{\scshape}
{}
{.5em}
{\thmnote{#3}}
\theoremstyle{athm}
\newtheorem*{athm}{}
\begin{document}
\title{The third homology of projective special linear group of degree two}
\author{Behrooz Mirzaii}
\author{Elvis Torres P\'erez}

\address{\sf 
Instituto de Ci\^encias Matem\'aticas e de Computa\c{c}\~ao (ICMC), Universidade de S\~ao Paulo, 
S\~ao Carlos, Brazil}
\email{bmirzaii@icmc.usp.br}
\address
{\sf Faculty of Sciences, National University of Engineering (UNI), Lima, Peru}
\email{elvis.torres.p@uni.pe}

\begin{abstract}
In this paper we investigate the third homology of the projective special linear group 
$\PSL_2(A)$. As a result of our investigation we prove a projective refined Bloch-Wigner 
exact sequence over certain class of rings. The projective Bloch-Wigner exact sequence 
over an algebraically closed field of characteristic zero is a classical result and has 
many application in algebra, number theory and geometry.
\end{abstract}
\maketitle

The third homology of $\SL_2(A)$ (and $\PSL_2(A)$) appears in many areas of algebra, number theory 
and geometry. It appears in the study of the third $K$-group of the ring $A$ \cite{suslin1991}, 
\cite{hmm2022}, it has deep connection with the polylogarithm function \cite{bloch2000} and is 
fundamental part of the scissors congruence problem in 3-dimensional hyperbolic and spherical 
geometry \cite{dupont-sah1982}, \cite{sah1989} (when $A\se \C$).

In this article we investigate the third homology of $\PSL_2(A)$ and its connection with the refined 
scissors congruence group of $A$ introduced and studied by Hutchinson \cite{hutchinson-2013},  
\cite{hutchinson2013}. 

Let $A$ be a commutative local ring. Let $\GG_A$ be the square class group of $A$, i.e. $\GG_A:=\aa/(\aa)^2$.
We denote by $\lan a\ran$ the element of $\GG_A$ represented by $a\in \aa$.
Let $\RP(A)$ be the quotient of the free $\z[\GG_A]$-module generated by symbols $[a]$, 
$a\in \WW_A:=\{a\in \aa:a-1\in \aa\}$, by the $\z[\GG_A]$-submodule generated by the elements
\[
[a]-[b]+\lan a\ran\bigg[\frac{b}{a}\bigg]-\lan a^{-1}-1\ran\Bigg[\frac{1-a^{-1}}{1-b^{-1}}\Bigg] +
\lan 1-a\ran\Bigg[\frac{1-a}{1-b}\Bigg],
\]
where $a,b,a/b \in \WW_A$. The map $\lambda_1:\RP(A)\arr \z[\GG_A]$ given by $[a]\mapsto\Lan a \Ran\Lan 1-a\Ran$,
is a well-defined $\z[\GG_A]$-homomorphism, where for $a\in \aa$ we set $\Lan a\Ran:=\lan a\ran-1$. If we consider
\[
S_\z^2(\aa):=(\aa \otimes_\z \aa)/\lan a\otimes b+b\otimes a:a,b \in \aa\ran
\]
as a trivial $\z[\GG_A]$-module, then $\lambda_2: \RP(A) \arr S_\z^2(\aa)$, given 
by $\overline{[a]} \mapsto \overline{a \otimes (1-a)}$, is a homomorphism of $\z[\GG_A]$-modules. 

Hutchinson defined the refined scissors congruence group $\RP_1(A)$ and the refined Bloch group
$\RB(A)$ of $A$ as follows:
\[
\RP_1(A):=\ker\Big(\lambda_1:\RP(A) \arr \z[\GG_A]\Big), \ \ \ 
\RB(A):=\ker\Big(\lambda_2|_{\RP_1(A)}:\RP_1(A) \arr S_\z^2(\aa)\Big)
\] 
(see \cite[page 28]{hutchinson-2013}, \cite[Subsection 2.3]{hutchinson2013}). The refined Bloch group 
$\RB(A)$ of $A$ is closely related to the classical Bloch group of $A$ \cite{hutchinson-2013}, 
\cite{C-H2022}, \cite{suslin1991}.

Hutchinson formulated the first version of a refined Bloch-Wigner sequence in 
\cite[Theorem 4.3]{hutchinson-2013}. In fact, he proved that for any infinite field $F$ 
there is a complex of $\z[F^\times/(F^\times)^2]$-modules
\[
0 \arr\tors(\mu(F),\mu(F)) \arr H_3(\SL_2(F),\z)\arr\RB(F)\arr 0,
\]
which is exact at every term except possibly at the term $H_3(\SL_2(F), \z)$, where the homology of the 
complex is annihilated by $4$. Here $\mu(F)$ is the group of roots of unity in $F$.
In a later work \cite[Theorem 3.22]{hutchinson2017} he generalized this 
result to any local domain with ``sufficiently large" residue field. In \cite[page 3]{C-H2022}, Coronado and 
Hutchinson asked if the sequence
\begin{equation*}\label{problem}
0 \arr\tors(\mu(A),\mu(A))^\sim \arr H_3(\SL_2(A),\z)\arr\RB(A)\arr 0
\end{equation*}
is exact for any local domain $A$, where $\tors(\mu(A),\mu(A))^\sim$ is the unique non-trivial extension of 
$\tors(\mu(A),\mu(A))$ by $\mu_2(A)$, the group of $2$-roots of unity in $A$.

A refined Bloch-Wigner exact sequence will reduce the study of the group
$H_3(\SL_2(A),\z)$, modulo a certain known torsion subgroup, to the study of the refined 
Bloch group $\RB(A)$ and the scissors congruence group $\RP_1(A)$. The algebra of $\RP_1(A)$ has been 
studied by Hutchinson in a series of papers \cite{hutchinson2013}, \cite{hutchinson2017}, 
\cite{hutchinson-2017}, \cite{C-H2022}.

In this article we prove a refined Bloch-Wigner exact sequence involving the third homology
of the projective special linear group $\PSL_2(A)$ (see Theorem \ref{Proj-BW}, Theorem \ref{G<4}
and Corollary \ref{PRBW-cor}).

\begin{athm}[{\bf Theorem A.}]
Let $A$ be a local domain such that its residue field either is infinite or it has $p^d$ elements, 
where $(p-1)d>6$. Moreover, let either $-1$ be square or the square class group of $A$ has at most 
$4$ elements. Then we have the exact sequence of $\z[\aa/(\aa)^2]$-modules
\begin{equation*}\label{PRBW}
0 \arr\tors(\widetilde{\mu}(A),\widetilde{\mu}(A)) \arr H_3(\PSL_2(A),\z)\arr\RB(A)\arr 0,
\end{equation*}
where $\widetilde{\mu}(A):=\mu(A)/\mu_2(A)$.
\end{athm}

In particular, we show that the above result is valid for real closed fields, most of local fields 
and most of finite fields.

If we remove the restricted condition on $-1$ or the square class group of $A$, we prove the following result
(see Theorems \ref{general-exa} and \ref{general-exa-1} and Corollary \ref{cor-last}). 
Here $\PSM_2(A)$ is the group of monomial matrices in $\PSL_2(A)$.

\begin{athm}[{\bf Theorem B.}]
Let $A$ be a local domain such that its residue field either is infinite or it has $p^d$ elements, 
where $(p-1)d>6$. Then we have the exact sequence of $\z[\aa/(\aa)^2]$-modules
\begin{equation*}\label{PSM2-1}
H_3(\PSM_2(A),\mathbb{Z})\arr H_3(\PSL_2(A),\mathbb{Z}) \arr \RB(A)/\lan\psi_1(-1)\ran\arr 0,
\end{equation*}
where $\lan \psi(-1)\ran$ is the $\z[\aa/(\aa)^2]$-submodule of $\RB(A)$ generated by  $\psi(-1)$. Moreover,
\begin{equation*}\label{PSM2-2}
H_3(\PSL_2(A),\PSM_2(A);\z)\simeq \RP_1(A)/\lan\psi_1(-1) \ran.
\end{equation*}
\end{athm}

Although understanding the structure of $H_3(\PSL_2(A),\z)$ is interesting and important in geometry and 
number theory, one of our motivations for the study of this group and exact sequences such as in {\bf Theorem A} 
and {\bf Theorem B}, is the natural map from $H_3(\SL_2(A),\z)$ to the indecomposable part of the 
third $K$-group of $A$, i.e. $K_3^\ind(A)$. In fact, if $A$ is a local ring, then there is a natural map 
\[
H_3(\SL_2(A),\z) \arr K_3^\ind(A)
\]
(see \cite[\S5]{hutchinson-tao2009}). Suslin asked the following interesting question 
\cite[Question 4.4]{sah1989}.

\begin{athm}[{\bf Question (Suslin).}]
If $F$ is an infinite field, is the natural map $\alpha_F: H_3(\SL_2(F),\z)_{F^\times} \arr K_3^\ind(F)$ 
an isomorphism?
\end{athm}

It is known that if $A$ is a local ring with ``sufficiently large'' residue field, then the kernel 
and cokernel of $\alpha_A$ are $2$-groups \cite[Proposition~6.4]{mirzaii-2008}. Hutchinson and Tao 
proved that $\alpha_F$, $F$ an infinite field, always is surjective \cite[Lemma~5.1]{hutchinson-tao2009}. 
When $F$ is finite, $\alpha_F$ is studied in \cite[Proposition~6.4]{mirzaii2017}.

Here we outline the organization of the present paper. In Section~\ref{sec1} we introduce the refined
scissors congruence group, its refined Bloch group and some of their elementary properties. 
In Section~\ref{sec2} we introduce and study a spectral sequence which will be our main tool in 
handling the homology of $\PSL_2(A)$. Here we calculate some of the differentials of the 
spectral sequence which are very important for us. In particular we prove Lemmas \ref{surj1} 
and \ref{wedge3} which are fundamental for our proof of the projective Bloch-Wigner exact sequence. In 
Section~\ref{sec3} we prove our projective refined Bloch-Wigner exact sequence over some classes of rings, i.e. 
Theorem A (see Theorem \ref{Proj-BW} and Theorem \ref{G<4}). In Section~\ref{sec4}, 
we prove Theorem B (see Theorem \ref{general-exa} and Theorem \ref{general-exa-1}).\\
 ~\\
{\bf Notations.} 
In this paper all rings are commutative, except possibly group rings, and have the unit 
element $1$. For a ring $A$ let $\GG_A:=\aa/(\aa)^2$ by $\GG_A$. The element of $\GG_A$ 
represented by $a\in \aa$ will be denoted by $\lan a \ran$. Moreover, let
\[
\mu_2(A):=\{a\in A: a^2=1\}, \ \ \ \mu(A):=\{a\in A: \text{there is $n\in \N$ such that $a^n=1$} \}.
\]
If $G$ is a subgroup of $\aa$ containing $\mu_2(A)$, we denote $G/\mu_2(A)$ by $\widetilde{G}$.
We set 
\[
\PGL_2(A):=\GL_2(A)/\{aI_2: a\in \aa\}, \ \ \ \ \PSL_2(A):=\SL_2(A)/\{aI_2: a\in\mu_2(A)\}. 
\]
Note that the determinant gives us the extension $1 \arr \PSL_2(A) \arr \PGL_2(A) \arr \GG_A \arr 1$.\\
~\\
{\bf Acknowledgments.}
We would like to thank the anonymous referee for many valuable comments and suggestions which improved 
the readability and the presentation of the present article. 

\section{The refined scissors congruence group}\label{sec1}

Let $A$ be a commutative ring. A column vector 
${\pmb u}={\mtx {u_1} {u_2}}\in A^2$ is called unimodular if 
\[
u_1A+u_2A=A.
\]

For any non-negative integer $n$, let $X_n(A^2)$ be the free abelian group generated 
by the set of all $(n+1)$-tuples $(\lan{\pmb v_0}\ran, \dots, \lan{\pmb v_n}\ran)$, where 
every ${\pmb v_i} \in A^2$ is unimodular and any two vectors ${\pmb v_i}, {\pmb v_j}$, 
$i\neq j$, are a basis of $A^2$. Observe that $\lan{\pmb v}\ran={\pmb v}A$.

We consider $X_n(A^2)$ as a left $\PGL_2(A)$-module (respectively left 
$\PSL_2(A)$-module) in a natural way. If necessary, we convert this action 
to a right action by the definition $m.g:=g^{-1}m$. We define the $n$-th 
differential operator
\[
\partial_n : X_n(A^2) \arr X_{n-1}(A^2), \ \ n\ge 1,
\]
as an alternating sum of face operators which throws away the $i$-th component of generators. 
It is straightforward to check that $\partial_{n-1}\circ\partial_n=0$. The complex 
\[
X_\bullet(A^2): \cdots \larr X_2(A^2) \overset{\partial_2}{\larr} X_1(A^2) \overset{\partial_1}{\larr} 
X_0(A^2) \arr 0
\]
will be our main tool for the study of the third homology of $\PSL_2(A)$. Let $\epsilon: X_0(A^2) \arr \z$ 
be defined by $\sum_i n_i(\lan{\pmb  v_{0,i}}\ran) \mt \sum_i n_i$. 

\begin{dfn}
A ring $A$ is called a ${\pmb \GE_2}${\bf -ring} if $\SL_2(A)$ is generated by elementary 
matrices $E_{12}(a)={\mtxx 1 a 0 1}$ and $E_{21}(a)={\mtxx 1 0 a 1}$, $a\in A$.
\end{dfn}

For a definition of the unstable ${\rm K}_2$-group of degree $2$, denoted by ${\rm K}_2(2,A)$ and its 
central subgroup ${\rm C}(2,A)$ generated by Steinberg symbols we refer the reader to 
\cite[App. A]{hutchinson2022}.

\begin{prp}[Hutchinson \cite{hutchinson2022}]\label{GE2C}
{\rm (i)} $H_0(X_\bullet(A^2))\overset{\bar{\epsilon}}{\simeq} \z$ if and only if $A$ is a $\GE_2$-ring. 
\par {\rm (ii)} $H_1(X_\bullet(A^2))=0$ if and only if ${\rm K}_2(2,A)/{\rm C}(2, A)$ is a perfect group.
\end{prp}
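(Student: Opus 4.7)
The plan is to handle the two parts separately: part (i) is a combinatorial identification of $H_0$ with the connected components of a ``bases graph'', while part (ii) identifies the vanishing of $H_1$ with a presentation-theoretic condition inside the unstable ${\rm K}_2$.

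For (i), I would first observe that $\partial_1(\lan \pmb{v_0}\ran, \lan \pmb{v_1}\ran) = \lan \pmb{v_1}\ran - \lan \pmb{v_0}\ran$, so $H_0(X_\bullet(A^2)) \simeq \z[\pi_0(\Gamma)]$, where $\Gamma$ is the graph whose vertices are the lines $L = \pmb{v}A$ (for $\pmb{v}$ unimodular) and whose edges are the bases $(L, L')$ of $A^2$. The augmentation $\bar\epsilon$ sends every component to $1$, so it is an isomorphism onto $\z$ iff $\Gamma$ is connected. To show this is equivalent to the $\GE_2$ condition: in one direction, given a unimodular $\pmb{v}$ one extends it to a matrix in $\SL_2(A)$ (always possible over a commutative ring) and writes that matrix as a product of elementaries $E_1 \cdots E_k$; right multiplication by each $E_i$ changes only one column, so the sequence of partial products gives a walk in $\Gamma$ from $\lan e_1\ran$ to $\lan \pmb{v}\ran$. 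In the other direction, one proceeds by induction on the $\Gamma$-distance: an adjacency $\lan e_1\ran \sim \lan \pmb{v}\ran$ forces the second entry of $\pmb{v}$ to be a unit, and an explicit factorization using $E_{12}$, $E_{21}$ and the Weyl element $w(u):= E_{12}(u)E_{21}(-u^{-1})E_{12}(u)$ exhibits $\pmb{v}$ as an element of $\Ee_2(A) \cdot e_1$. Iterating proves $\Ee_2(A)$ acts transitively on unimodular vectors, hence $\SL_2(A) = \Ee_2(A)$.

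For (ii), I would construct a natural isomorphism $H_1(X_\bullet(A^2)) \simeq ({\rm K}_2(2,A)/{\rm C}(2,A))^{\rm ab}$; the equivalence then follows from the tautology that a group is perfect iff its abelianization vanishes. A $1$-cycle is a formal closed walk in $\Gamma$, and lifting such a walk via the $\GE_2$-decomposition yields a word in the Steinberg generators $x_{12}(a), x_{21}(a)$ whose image in $\SL_2(A)$ is trivial, hence represents an element of ${\rm K}_2(2,A)$. The boundary of a $2$-simplex, i.e.\ a triple of pairwise-basis lines, encodes exactly a standard Steinberg symbol relation, so $\partial_2 X_2$ maps into ${\rm C}(2,A)$. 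Since $H_1$ is abelian, the induced map factors through the abelianization, and the nontrivial content is verifying that this gives an isomorphism with $({\rm K}_2(2,A)/{\rm C}(2,A))^{\rm ab}$ and not some larger quotient.

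The main technical obstacle is the verification in part (ii) that the lifting map from $1$-cycles to ${\rm K}_2(2,A)/{\rm C}(2,A)$ is well-defined modulo triangles and induces precisely the abelianization. I would follow the argument of Hutchinson in \cite{hutchinson2022}, using the explicit presentation of ${\rm K}_2(2,A)$ over $\GE_2$-rings and its comparison with the low-degree part of $X_\bullet(A^2)$.
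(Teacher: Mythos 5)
The paper offers no argument of its own here---its ``proof'' is a bare citation of \cite[Theorems 3.3, 7.2 and Corollary 7.3]{hutchinson2022}---and your sketch is a correct outline of exactly those cited arguments: the identification of $H_0(X_\bullet(A^2))$ with $\z[\pi_0]$ of the graph of unimodular lines and its connectivity being equivalent to $\SL_2(A)=\Ee_2(A)$ for (i), and the identification of $H_1(X_\bullet(A^2))$ with the abelianization of ${\rm K}_2(2,A)/{\rm C}(2,A)$ for (ii), with the technical core of (ii) deferred, as you acknowledge, to the same source. So this is essentially the same approach as the paper.
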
 
\begin{proof}
See \cite[Theorem 3.3, Theorem 7.2 and Corollary 7.3]{hutchinson2022}. 
\end{proof} 

\begin{exa}\label{exact-11}
We mostly will work with a ring $A$ which satisfies the condition that $X_\bullet(A^2)\arr \z$ 
is exact in dimension $<2$, i.e. $H_0(X_\bullet(A^2))\overset{\bar{\epsilon}}{\simeq} \z$ and 
$H_1(X_\bullet(A^2))=0$. The following rings satisfy this condition:
\par (1) Any local ring (see \cite[Theorem~4.1]{cohn1966}, 
\cite[Theorem 7.2 and Corollary 7.3]{hutchinson2022}),
\par (2) Any semi-local ring such that none of the rings 
$\z/2 \times \z/2$, $\z/6$ is a direct factor of $A/J(A)$, where $J(A)$ is the 
Jacobson radical of $A$ \cite[Theorem 2.14]{menal1979}.
\par (3) Any ring with many units \cite[\S2]{mirzaii-2008}.
\par (4) $A=\z[\frac{1}{m}]$, where $m$ can be expressed as a product of primes 
$m=p_1^{\alpha_1}\cdots p_t^{\alpha_t}$ ($\alpha_i\geq 1$) with property that $(\z/p_i)^\times$ 
is generated by the residue classes $\{-1, p_1,\dots, p_{i-1}\}$ for all $i\leq t$ (in particular,
$p_1\in \{2,3\}$) \cite[Example 6.14]{hutchinson2022}.
\end{exa}

For any non-negative integer $i$, let 
\[
Z_i(A^2)=\ker(\partial_i), \ \ \ \ B_i(A^2)=\im(\partial_{i+1}). 
\]
Following Coronado and Hutchinson \cite[\S3]{C-H2022} we define $\RP(A)$ as follows.
\begin{dfn}
For a commutative ring $A$, let
\[
\RP(A):=H_0(\SL_2(A), Z_2(A^2))=H_0(\PSL_2(A), Z_2(A^2)).
\]
\end{dfn}

See \cite[\S6]{C-H2022} and \cite[\S2]{B-E--2023} for a justification of this definition. Note that 
$\RP(A)$ has a natural $\GG_A$-module structure. The inclusion \inc: $Z_2(A^2) \harr X_2(A^2)$ 
induces the map
\[
\lambda:=\inc_\ast: \RP(A)  \larr H_0(\PSL_2(A), X_2(A^2)).
\]
Let ${\pmb e_1}:={\mtx 1 0}$ and $ {\pmb e_2}:={\mtx 0 1}$ and set
\[
{\pmb\infty}:=\lan {\pmb e_1}\ran, \ \ \  {\pmb 0}:=\lan {\pmb e_2}\ran , \ \ \  
{\pmb a}:=\lan {\pmb e_1}+ a{\pmb e_2}\ran, \ \ \ a\in \aa.
\]
The group $\PSL_2(A)$ acts transitively on the generators of $X_i(A^2)$ for $i=0,1$.
We choose $({\pmb \infty})$ and $({\pmb \infty} ,{\pmb 0})$ as representatives of the orbit of 
the generators of $X_0(A^2)$ and $X_1(A^2)$, respectively. Therefore 
\[
X_0(A^2)\simeq \Ind _{\PB(A)}^{\PSL_2(A)}\z, \ \ \ \ \ \ X_1(A^2)\simeq \Ind _{\PT(A)}^{\PSL_2(A)}\z,
\]
where 
\[
\PB(A):=\stabe_{\PSL_2(A)}({\pmb \infty})=\Bigg\{\begin{pmatrix}
a & b\\
0 & a^{-1}
\end{pmatrix}:a\in \aa, b\in A\bigg\}/\mu_2(A)I_2,
\]
\[
\PT(A):=\stabe_{\PSL_2(A)}({\pmb \infty},{\pmb 0})=\Bigg\{\begin{pmatrix}
a & 0\\
0 & a^{-1}
\end{pmatrix}:a\in \aa\bigg\} /\mu_2(A)I_2.
\]
Thus  by Shapiro's lemma we have
\[
H_q(\PSL_2(A), X_0(A^2)) \simeq H_q(\PB(A),\z),
\]
\[
H_q(\PSL_2(A), X_1(A^2)) \simeq H_q(\PT(A),\z).
\]
Note that $\PT(A)\simeq \aa/\mu_2(A)$. For $a\in \aa$, let 
\[
D(a):=\overline{{\mtxx a 0 0 {a^{-1}}}} \in \PSL_2(A).
\]
The orbits of the action of  $\PSL_2(A)$ on $X_2(A^2)$ are represented by
$\lan a\ran[\ ]:=({\pmb \infty}, {\pmb 0}, {\pmb a})$, $\lan a\ran\in \GG_A$. 
The stabilizer of $({\pmb \infty}, {\pmb 0}, {\pmb a})$ is trivial. Thus
\[
X_2(A^2)\simeq \bigoplus_{\lan a\ran \in \GG_A} \Ind _{1}^{\PSL_2(A)}\z.
\]
Again by Shapiro's lemma 
\[
H_q(\PSL_2(A),X_2(A^2)) \simeq \bigoplus_{\lan a\ran\in \GG_A} H_q(1, \z)\simeq
\begin{cases}
 \z[\GG_A]  & \text{if $q=0$} \\
 0          & \text{if $q\neq  0$}
\end{cases}. 
\]
\begin{dfn}
The $\GG_A$-module  
\[
\RP_1(A):=\ker(\lambda:\RP(A) \arr \z[\GG_A])
\]
is called the {\bf refined scissors congruence group of $A$}.
\end{dfn} 

\begin{lem}
For any commutative ring $A$,  $\RP_1(A)\simeq H_1(\PSL_2(A),B_1(A^2))$.  In particular, if
$A$ satisfies the condition that $X_\bullet(A^2)$ is exact in dimension $1$, then 
$\RP_1(A)\simeq H_1(\PSL_2(A),Z_1(A^2))$.
\end{lem}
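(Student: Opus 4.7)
The plan is to derive both isomorphisms from a single long exact sequence in $\PSL_2(A)$-equivariant homology. First I would consider the short exact sequence of $\PSL_2(A)$-modules
\[
0 \arr Z_2(A^2) \arr X_2(A^2) \overset{\partial_2}{\arr} B_1(A^2) \arr 0,
\]
which exists by the very definitions of $Z_2(A^2)$ and $B_1(A^2)$, and take its associated long exact sequence in homology. The relevant segment reads
\[
H_1(\PSL_2(A),X_2(A^2)) \arr H_1(\PSL_2(A), B_1(A^2)) \arr H_0(\PSL_2(A), Z_2(A^2)) \arr H_0(\PSL_2(A), X_2(A^2)).
\]

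Next I would substitute the Shapiro-lemma calculations already performed in the excerpt, namely $H_1(\PSL_2(A), X_2(A^2)) = 0$ and $H_0(\PSL_2(A), X_2(A^2)) \simeq \z[\GG_A]$, together with the definitional identity $H_0(\PSL_2(A), Z_2(A^2)) = \RP(A)$. This collapses the sequence to
\[
0 \arr H_1(\PSL_2(A), B_1(A^2)) \arr \RP(A) \arr \z[\GG_A].
\]
The rightmost arrow is induced by the inclusion $\inc:Z_2(A^2) \harr X_2(A^2)$, which is precisely the map $\lambda$ used in the definition of $\RP_1(A)$; hence $H_1(\PSL_2(A), B_1(A^2)) \simeq \ker(\lambda) = \RP_1(A)$, establishing the first claim.

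For the second claim, exactness of $X_\bullet(A^2)$ in dimension $1$ is the assertion $H_1(X_\bullet(A^2)) = 0$, i.e.\ $B_1(A^2) = Z_1(A^2)$ as $\PSL_2(A)$-modules, so the first claim immediately upgrades to $\RP_1(A) \simeq H_1(\PSL_2(A), Z_1(A^2))$. There is no substantive obstacle to this argument; the one mild subtlety is verifying that the functorially induced map $\RP(A) \arr \z[\GG_A]$ really coincides with $\lambda$, which is just a matter of tracing definitions, since $\lambda$ was itself defined as $\inc_\ast$.
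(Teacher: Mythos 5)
Your proposal is correct and is essentially identical to the paper's own proof: both take the long exact homology sequence of $0 \arr Z_2(A^2) \arr X_2(A^2) \overset{\partial_2}{\arr} B_1(A^2) \arr 0$, use the vanishing of $H_1(\PSL_2(A),X_2(A^2))$ to identify $H_1(\PSL_2(A),B_1(A^2))$ with $\ker(\lambda)=\RP_1(A)$, and then substitute $B_1(A^2)=Z_1(A^2)$ under the exactness hypothesis. No differences worth noting.
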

\begin{proof}
From the exact sequence $0 \arr Z_2(A^2) \overset{\inc}{\larr} X_2(A^2) \overset{\partial_2}{\larr} B_1(A^2) \arr 0$
we obtain the long exact sequence
\[
 H_1(\PSL_2(A),X_2(A^2)) \arr H_1(\PSL_2(A),B_1(A^2)) \overset{\delta}{\larr} \RP(A)  
 \overset{\lambda}{\larr} \z[\GG_A].
\]
We showed in above that $H_1(\PSL_2(A),X_2(A^2))=0$.  This implies the claim.
\end{proof}

Motivated by this lemma, we define

\begin{dfn}
For any commutative ring $A$, we define
\[
\RP_1'(A):=H_1(\PSL_2(A),Z_1(A^2)).
\]
\end{dfn}

\begin{rem}\label{RPRP'}
We always have a natural map 
\[
\RP_1(A) \arr \RP_1'(A),
\]
induced by the inclusion
$B_1(A^2)\harr Z_1(A^2)$. Clearly if $X_\bullet(A^2)$ is exact in dimension 1, i.e. 
$H_1(X_\bullet(A^2))=0$, then $\RP_1(A) = \RP_1'(A)$. 
\end{rem}

From the exact sequence 
\[
0 \arr  Z_1(A^2) \overset{\inc}{\larr} X_1(A^2) \overset{\partial_1}{\larr} B_0(A^2) \arr 0
\]
we obtain the long exact sequence
\[
H_2(\PSL_2(A),X_1(A^2)) \arr H_2(\PSL_2(A),B_0(A^2))\arr \RP_1'(A) \arr H_1(\PSL_2(A),X_1(A^2)).
\]
The right hand map is trivial (see the proof of Lemma~\ref{d21} below). 
Thus we have the exact sequence
\[
H_2(\PT(A),\z) \arr H_2(\PSL_2(A),B_0(A^2))\arr \RP_1'(A) \arr 0.
\]
Consider the inclusion $B_0(A^2) \harr X_0(A^2)$. The composition
\[
H_2(\PT(A),\z)\simeq H_2(\PSL_2(A),X_1(A^2)) \arr H_2(\PSL_2(A),B_0(A^2)) 
\]
\[
\arr 
H_2(\PSL_2(A),X_0(A^2))=H_2(\PB(A),\z)
\]
is trivial. In fact, this is the differential $d_{1,2}^1$ of the spectral sequence introduced in the 
next section (see the paragraph above Lemma~\ref{d21}).
Thus we obtain a natural map
\[
\RP_1'(A) \simeq H_2(\PSL_2(A),B_0(A^2))/H_2(\PT(A),\z) \arr H_2(\PB(A),\z),
\]
which we denote it by $\lambda_1'$:
\[
\lambda_1': \RP_1'(A) \arr H_2(\PB(A),\z).
\]
We denote the composite
\[
\RP_1(A) \arr  \RP_1'(A) \overset{\lambda_1'}{\larr} H_2(\PB(A),\z)
\]
with $\lambda_1$.

\begin{dfn}
The kernel of $\lambda_1$ is called the {\bf refined Bloch group} of $A$ and is denoted by $\RB(A)$:
\[
\RB(A):=\ker(\RP_1(A) \overset{\lambda_1}{\larr} H_2(\PB(A),\z)).
\]
Moreover, we denote the kernel of $\lambda_1'$ by $\RB'(A)$:
\[
\RB'(A):=\ker(\RP_1'(A) \overset{\lambda_1'}{\larr} H_2(\PB(A),\z)).
\]
\end{dfn}

\begin{rem}\label{IA}
This definition of $\RB(A)$ is a bit different than the one defined in \cite[\S4]{C-H2022} and 
\cite[\S4]{B-E--2023} (see Remark \ref{RB(Z)} below). But when $H_2(\Tt(A),\z)\simeq H_2(\Bb(A),\z)$ 
and $H_2(\PT(A),\z)\simeq H_2(\PB(A),\z)$, these two definitions coincide.
\end{rem}
For any commutative ring $A$, let
\[
\GW'(A):=H_0(\PSL_2(A), Z_1(A^2)).
\] 
The inclusion $\inc:Z_1(A^2) \harr X_1(A^2)$ induces the map
\[
\varepsilon:=\inc_\ast: \GW'(A) \arr H_0(\PSL_2(A), X_1(A^2))\simeq H_0(\PT(A),\z)=\z.
\]
This map is surjective, since the composite 
\[
\z[\GG_A]\simeq H_0(\PSL_2(A), X_2(A^2))\arr \GW'(A) \arr H_0(\PSL_2(A), X_1(A^2))\simeq\z
\]
is the augmentation map (which clearly is surjective). We denote the kernel of $\varepsilon$ by $I'(A)$:
\[
I'(A):=\ker(\GW'(A) \overset{\varepsilon}{\larr} \z).
\]


\begin{rem}\label{GW}
If $A$ satisfies the condition that $X_\bullet(A^2)\arr \z$ is exact in dimension $< 3$,
then $\GW'(A)$ is isomorphic to 
\[
\overline{\GW}(A):=\z[\GG_A]/\lan \Lan a\Ran\Lan 1-a\Ran:a\in \WW_A\ran
\]
(see \cite[\S2]{B-E--2023}), where $\WW_A:=\{a\in A: a(a-1)\in \aa\}$.
When $A$ is a field of odd characteristic or a local ring 
with sufficiently large residue field $k$, where $\char(k)\neq 2$, this latter group coincides 
with the classical Grothendieck-Witt group of symmetric bilinear forms over $A$ (see 
\cite[Lemma 1.1, Chap. 4]{HM1973}, \cite[page 203]{mazzoleni2005}, \cite[Proposition 4.6]{{suslin1985}}). 
\end{rem}

\section{The homology of projective special linear group of degree 2}\label{sec2}

Let $L_\bullet(A^2)$ be the complex
\begin{equation}\label{comp1}
0 \arr Z_1(A^2)  \overset{\inc}{\arr} X_1(A^2)  \overset{\partial_1}{\arr} X_0(A^2)  \arr 0.
\end{equation}
Let $D_{\bullet,\bullet}$ be the double complex $F_\bullet\otimes_{\PSL_2(A)} L_\bullet(A^2)$,
where $F_\bullet \arr \z$ is a projective resolution of $\z$ over $\PSL_2(A)$. From 
$D_{\bullet,\bullet}$ we obtain the first quadrant spectral sequence
\[
E^1_{p.q}=H_q(\PSL_2(A),L_p(A^2))\Longrightarrow H_{p+q}(\PSL_2(A),L_\bullet(A^2))
\]
(see \cite[\S5, Chap. VII]{brown1994}).
Note that in this spectral sequence (and any other spectral sequence) our notation for the 
differential $d_{p,q}^r$ is as follows:
\[
d_{p,q}^r: E_{p,q}^r \arr E_{p-r,q+r-1}^r.
\]
In this spectral sequence, $H_n(\PSL_2(A),L_\bullet(A^2))$ is the $n$-th hyperhomology of $\PSL_2(A)$ 
with coefficients in the complex $L_\bullet(A^2)$ (see \cite[\S5.7]{weibel1994}, 
\cite[1.3,\S1, Chap. III]{brown1994}). Thus $H_n(\PSL_2(A),L_\bullet(A^2))$ is the $n$-th homology of the 
total complex of the double complex $D_{\bullet,\bullet}$. 

\begin{lem}
If $A$ is a $\GE_2$-ring, then for any $n$
\[
H_n(\PSL_2(A),L_\bullet(A^2))\simeq H_n(\PSL_2(A),\z).
\]
\end{lem}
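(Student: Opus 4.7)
The plan is to compute the homology of the complex $L_\bullet(A^2)$ itself, then exploit the second hyperhomology spectral sequence associated to the double complex $D_{\bullet,\bullet}$.

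First I would analyze $L_\bullet(A^2)$ as an abstract complex of $\PSL_2(A)$-modules. By construction, the differential $\inc : Z_1(A^2)\harr X_1(A^2)$ is injective and its image equals $Z_1(A^2)=\ker(\partial_1)$, so the homology of $L_\bullet(A^2)$ vanishes in degrees $1$ and $2$. In degree $0$ we have
\[
H_0(L_\bullet(A^2))=X_0(A^2)/\im(\partial_1)=X_0(A^2)/B_0(A^2)=H_0(X_\bullet(A^2)).
\]
Here the hypothesis that $A$ is a $\GE_2$-ring enters: by Proposition \ref{GE2C}(i), the augmentation $\epsilon$ induces an isomorphism $\bar\epsilon:H_0(L_\bullet(A^2))\overset{\simeq}{\larr}\z$ with trivial $\PSL_2(A)$-action.

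Next I would invoke the second standard hyperhomology spectral sequence of the double complex $D_{\bullet,\bullet}=F_\bullet\otimes_{\PSL_2(A)}L_\bullet(A^2)$, namely
\[
\widetilde{E}^2_{p,q}=H_p(\PSL_2(A),H_q(L_\bullet(A^2)))\Longrightarrow H_{p+q}(\PSL_2(A),L_\bullet(A^2))
\]
(see e.g.\ \cite[\S5.7]{weibel1994}). From the vanishing computed above, $\widetilde{E}^2_{p,q}=0$ for $q\neq 0$, and $\widetilde{E}^2_{p,0}=H_p(\PSL_2(A),\z)$. The spectral sequence therefore degenerates at the $E^2$-page and yields the claimed isomorphism
\[
H_n(\PSL_2(A),L_\bullet(A^2))\simeq H_n(\PSL_2(A),\z).
\]

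There is no serious obstacle here; the only subtle point is making sure that the $\GE_2$ hypothesis is applied at the right spot, namely to identify $H_0(L_\bullet(A^2))$ with the trivial module $\z$ so that the second hyperhomology spectral sequence collapses onto the ordinary group homology of $\PSL_2(A)$.
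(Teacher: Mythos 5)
Your proposal is correct and follows essentially the same route as the paper: compute that $L_\bullet(A^2)$ has homology $\z$ concentrated in degree $0$ (using the $\GE_2$ hypothesis via Proposition \ref{GE2C}(i)) and then conclude via the collapse of the second hyperhomology spectral sequence. The paper simply outsources this last step to a citation of \cite[Lemma 4.6]{hutchinson-2013}, whereas you make the spectral-sequence argument explicit; the content is the same.
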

\begin{proof}
Since $A$ is a $\GE_2$-ring, by Proposition \ref{GE2C} we have $H_0(L_\bullet(A^2))\simeq \z$. 
Moreover, by definition, $H_i(L_\bullet(A^2))=0$ for any $i\geq 1$. Now the claim follows from
\cite[Lemma 4.6]{hutchinson-2013}.
\end{proof}

In our calculations, for the resolution $F_\bullet \arr \z$ we usually use either the bar resolution 
$B_\bullet(\PSL_2(A))\arr \z$ or the standard resolution $C_\bullet(\PSL_2(A))\arr \z$ 
\cite[Chap.I, \S 5]{brown1994}. Note that we have the natural morphisms of complexes 
$B_\bullet(\PSL_2(A)) \arr C_\bullet(\PSL_2(A))$ and 
$C_\bullet(\PSL_2(A)) \arr B_\bullet(\PSL_2(A))$ given by
\[
B_n(\PSL_2(A)) \arr C_n(\PSL_2(A)),  \ \ \ [h_1|h_2|\cdots|h_n]\mapsto (1,h_1,h_1h_2, 
\dots, h_1h_2\cdots h_n),
\]
\[
C_n(\PSL_2(A)) \arr B_n(\PSL_2(A)), \ \ \ (g_0,\dots, g_n) \mapsto  g_0[g_0^{-1}g_1|
g_1^{-1}g_2|\cdots |g_{n-1}^{-1}g_n],
\]
which induce the identity on the homology of $\PSL_2(A)$ with any coefficients.

We have seen that
\[
E_{0,q}^1 \simeq H_q(\PB(A),\z), \  \  \ 
E_{1,q}^1 \simeq H_q(\PT(A),\z).
\]
In particular, $E_{0,0}^1\simeq\z\simeq E_{1,0}^1$. Moreover 
\[
d_{1,q}^1=H_q(\sigma)-H_q(\inc),
\]
where $\sigma: \PT(A) \arr \PB(A)$ is given by $\sigma(D(a))= wD(a) w^{-1}=D(a)^{-1}$ for 
$w=\overline{{\mtxx 0 1 {-1} 0}}$. This easily implies that $d_{1,0}^1$ is trivial and 
\[
d_{1,1}^1:\PT(A)=H_1(\PT(A),\z)\arr H_1(\PB(A),\z)=\PB(A)/[\PB(A),\PB(A)]
\]
is given by $D(a)\mt \overline{D(a)}^{-2}$. Moreover, $d_{1,2}^1$ is trivial. In fact, if 
under the isomorphism $\PT(A)\wedge \PT(A)\simeq H_2(\PT(A),\z)$, the image of $D(a)\wedge D(b)$
is ${\bf c}(D(a), D(b))$, then 
\[
d_{1,2}^1:H_2(\PT(A),\z)\arr H_2(\PB(A),\z),
\]
is given by
\[
d_{1,2}^1({\bf c}(D(a), D(b))) = {\bf c}(D(a)^{-1}, D(b)^{-1}) - {\bf c}(D(a), D(b))=0.
\]

\begin{lem}\label{d21}
The differential $d^1_{2,1}$ is trivial. In particular, $E_{2,1}^2=\RP_1'(A)$
and $E_{1,1}^2\simeq \widetilde{\mu}_4(A)$.
\end{lem}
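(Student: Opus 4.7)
The differential $d^1_{2,1}\colon E^1_{2,1}\to E^1_{1,1}$ is, by construction of the spectral sequence, the map on $H_1(\PSL_2(A),-)$ induced by the only non-zero map $L_2\to L_1$ in the complex $L_\bullet(A^2)$, namely the inclusion $\inc\colon Z_1(A^2)\hookrightarrow X_1(A^2)$. My plan is to prove this map is trivial, and then read off the two stated conclusions from routine spectral-sequence bookkeeping.

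First I would identify the target via Shapiro's lemma, $H_1(\PSL_2(A),X_1(A^2))\simeq H_1(\PT(A),\z)\simeq \PT(A)=\aa/\mu_2(A)$, under which a class $D(a)$ is realised by the $1$-cycle $[D(a)]\otimes(\pmb\infty,\pmb 0)$ in the inhomogeneous bar complex. To prove $d^1_{2,1}=0$, I would invoke the long exact sequence in $H_\ast(\PSL_2(A),-)$ coming from $0\to Z_1(A^2)\to X_1(A^2)\overset{\partial_1}{\to}B_0(A^2)\to 0$,
\[
H_2(\PSL_2(A),B_0(A^2))\overset{\delta}{\to}H_1(\PSL_2(A),Z_1(A^2))\overset{d^1_{2,1}}{\to}H_1(\PSL_2(A),X_1(A^2))\to H_1(\PSL_2(A),B_0(A^2)),
\]
which reduces triviality of $d^1_{2,1}$ to injectivity of the rightmost arrow $\PT(A)\to H_1(\PSL_2(A),B_0(A^2))$. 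This injectivity I would check at the chain level: the cycle $[D(a)]\otimes(\pmb\infty,\pmb 0)$ maps to $[D(a)]\otimes(\pmb 0-\pmb\infty)$, which I would show represents a non-trivial class in $H_1(\PSL_2(A),B_0(A^2))$ unless $a\in\mu_2(A)$, by pushing forward into $H_1(\PB(A),\z)\simeq\PB(A)^{\rm ab}$ and using its structure computed below.

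The main obstacle is that the further map $H_1(\PSL_2(A),B_0(A^2))\to H_1(\PB(A),\z)$ need not be injective, since its kernel receives a contribution from $H_2(\PSL_2(A),\z)$ via the long exact sequence of $0\to B_0(A^2)\to X_0(A^2)\to H_0(X_\bullet(A^2))\to 0$. I therefore expect the delicate point is to check that the image of $\PT(A)$ in $H_1(\PSL_2(A),B_0(A^2))$ intersects this kernel trivially; I would do this by an explicit chain-homotopy argument using the Weyl element $w=\overline{{\mtxx 0 1 {-1} 0}}$ together with a direct computation on the basepoint cycle.

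Once $d^1_{2,1}=0$ is proved, both consequences are immediate. Since $L_p(A^2)=0$ for $p\geq 3$ we have $E^1_{3,1}=0$, and hence $E^2_{2,1}=\ker(d^1_{2,1})=E^1_{2,1}=H_1(\PSL_2(A),Z_1(A^2))=\RP_1'(A)$ by definition. For $E^2_{1,1}=\ker(d^1_{1,1})$, I would use the semidirect decomposition $\PB(A)\simeq A\rtimes\PT(A)$ with action $D(a)\cdot c=a^2c$; a direct commutator calculation $[D(a),E_{12}(c)]=E_{12}((a^2-1)c)$ gives $\PB(A)^{\rm ab}\simeq (A/I)\oplus\PT(A)$, where $I$ is the ideal of $A$ generated by all $(a^2-1)c$ with $a\in\aa$ and $c\in A$. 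The differential $d^1_{1,1}\colon D(a)\mapsto\overline{D(a)}^{-2}$ then becomes $D(a)\mapsto(0,a^{-2})\in (A/I)\oplus\PT(A)$, whose kernel in $\aa/\mu_2(A)$ is exactly $\{a:a^4=1\}/\mu_2(A)=\mu_4(A)/\mu_2(A)=\widetilde{\mu}_4(A)$.
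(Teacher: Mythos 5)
Your reduction is sound as far as it goes: by the long exact sequence of $0\to Z_1(A^2)\to X_1(A^2)\to B_0(A^2)\to 0$, the vanishing of $d^1_{2,1}=\inc_\ast$ is indeed equivalent to injectivity of $(\partial_1)_\ast\colon H_1(\PT(A),\z)\to H_1(\PSL_2(A),B_0(A^2))$, and your bookkeeping for the two consequences is correct (in particular your computation $\ker(d^1_{1,1})=\widetilde{\mu}_4(A)$ via $\PB(A)^{\rm ab}\simeq (A/I)\oplus\PT(A)$ agrees with the paper, and $E^1_{3,1}=0$ gives $E^2_{2,1}=\ker(d^1_{2,1})$). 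But there is a genuine gap exactly where the lemma has its content. Pushing forward to $H_1(\PB(A),\z)$ detects only the classes outside $\ker(d^1_{1,1})=\widetilde{\mu}_4(A)$, and since $\im(d^1_{2,1})\subseteq\ker(d^1_{1,1})$ automatically, the entire statement amounts to showing that the classes $D(a)$ with $a\in\mu_4(A)\setminus\mu_2(A)$ are \emph{not} hit by $\inc_\ast$, i.e.\ survive in $H_1(\PSL_2(A),B_0(A^2))$. You acknowledge this and defer it to an unspecified ``explicit chain-homotopy argument using the Weyl element''; that is a promissory note for the whole lemma, not a proof, and it is not evident that such a direct computation in $H_1(\PSL_2(A),B_0(A^2))$ is feasible (one would also have to control the contribution of $H_2(\PSL_2(A),H_0(X_\bullet(A^2)))$ that you mention).

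The paper circumvents this difficulty by a comparison that your argument lacks: it maps the analogous spectral sequence for $\SL_2(A)$ onto the one for $\PSL_2(A)$. There one has $\ker({d'}^1_{1,1})=\mu_2(A)I_2$ inside $\Tt(A)=\aa$ (since $a\mapsto a^{-2}$ on $\aa$ has kernel $\mu_2(A)$, with no $\mu_4$ ambiguity), so $\im({d'}^1_{2,1})\subseteq\mu_2(A)I_2$, which dies under $\Tt(A)\to\PT(A)$. Combining this with the surjectivity of $H_1(\SL_2(A),Z_1(A^2))\to H_1(\PSL_2(A),Z_1(A^2))$, extracted from the Lyndon/Hochschild--Serre spectral sequence of the central extension $1\to\mu_2(A)\to\SL_2(A)\to\PSL_2(A)\to1$ (the action of $\mu_2(A)I_2$ on $Z_1(A^2)$ being trivial), forces $d^1_{2,1}=0$. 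If you want to salvage your direct approach, you would need to supply the missing detection of the $\widetilde{\mu}_4(A)$-classes in $H_1(\PSL_2(A),B_0(A^2))$; otherwise the lift to $\SL_2(A)$ is the efficient route.
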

\begin{proof}
The natural map $\SL_2(A) \arr \PSL_2(A)$ induces the morphism of spectral sequences
\[
\begin{tikzcd}
{E'}_{p,q}^1\ar[r,Rightarrow] \ar[d]& H_{p+q}(\SL_2(A),L_\bullet(A^2))\ar[d]\\
E_{p,q}^1\ar[r,Rightarrow]& H_{p+q}(\PSL_2(A),L_\bullet(A^2)),
\end{tikzcd}
\]
where
\[
{E'}^1_{p.q}=H_q(\SL_2(A),L_p(A^2)).
\]
The spectral sequence ${E'}_{\bullet,\bullet}$ has been studied in \cite{B-E-2023} and \cite{B-E--2023}.
From the above morphism we obtain the commutative diagram
\begin{equation*}
\begin{tikzcd}
H_1(\SL_2(A),Z_1(A^2))\ar[r,"{d'}^1_{2,1}"] \ar[d,"u"]& H_1(\Tt(A),\z) 
\ar[r,"{d'}^1_{1,1}"] \ar[d,"u'"] & H_1(\Bb(A),\z) \ar[d,"u''"] \\
H_1(\PSL_2(A),Z_1(A^2))\ar[r,"d^1_{2,1}"]& H_1(\PT(A),\z) \ar[r,"d^1_{1,1}"] & H_1(\PB(A),\z),
\end{tikzcd}	
\end{equation*}
where 
\[
\Bb(A):=\Bigg\{\begin{pmatrix}
a & b\\
0 & a^{-1}
\end{pmatrix}:a\in \aa, b\in A\bigg\},
\ \ \ \ 
\Tt(A):=\Bigg\{\begin{pmatrix}
a & 0\\
0 & a^{-1}
\end{pmatrix}:a\in \aa\bigg\}.
\]
From the Lyndon/Hochschild-Serre spectral sequence of the central extension
\[
1\arr \mu_2(A) \arr \SL_2(A) \arr \PSL_2(A)\arr 1,
\]
it follows that $u$ is surjective (See \cite[Corollary 6.4, Chap. VII]{brown1994}). 
Using the fact that $H_1(G,\z)\simeq G/[G,G]$, $G$ a group, 
we see that $u'$ and $u''$ are surjective.

The differential ${d'}^1_{1,1}: H_1(\Tt(A),\z) \arr H_1(\Bb(A),\z)$ is induced by the
map $\Tt(A) \arr \Bb(A)$, $X\mapsto X^{-2}$ \cite[\S2]{B-E-2023}. So
\[
\ker({d'}^1_{1,1})=\mu_2(A)I_2
\]
(see the last paragraph of \cite[\S2]{B-E-2023}). Since 
\[
u'(\ker({d'}^1_{1,1}))=u'(\mu_2(A))=1, \ \ \ \ \im({d'}^1_{2,1})\subseteq \ker({d'}^1_{1,1})=\mu_2(A)I_2,
\]
we have $u'\circ {d'}^1_{2,1}=0$. Therefore, from the above commutative diagram we get $d_{2,1}^1=0$.
\end{proof}

\begin{lem}\label{coincide1}
The differential $d_{2,1}^2:\RP_1'(A) \arr H_2(\PB(A),\z)$ coincides with the map $\lambda_1'$. 
In particular, $E_{2,1}^\infty\simeq E_{2,1}^3=\RB'(A)$.
\end{lem}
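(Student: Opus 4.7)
The plan is to unwind the recipe for the second differential of the column-filtration spectral sequence of the double complex $D_{\bullet,\bullet}=F_\bullet\otimes_{\PSL_2(A)} L_\bullet(A^2)$ and match it term-by-term with the composite defining $\lambda_1'$. Write $d^v$ for the vertical differential coming from $F_\bullet$ and $d^h$ for the horizontal one coming from $L_\bullet(A^2)$. A class $[z]\in E_{2,1}^2=\RP_1'(A)$ is represented by a $d^v$-cycle $z\in F_1\otimes_{\PSL_2(A)} Z_1(A^2)$. Lemma~\ref{d21} shows that $d^1_{2,1}=0$, so the image $d^h z=\inc(z)$ in $F_1\otimes_{\PSL_2(A)} X_1(A^2)$ is a $d^v$-boundary, and we may choose $u\in F_2\otimes_{\PSL_2(A)} X_1(A^2)$ with $d^v u=\inc(z)$. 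The standard zig-zag prescription then gives
\[
d^2_{2,1}([z])=[\partial_1(u)]\in E_{0,2}^2\se H_2(\PB(A),\z),
\]
where $\partial_1(u)\in F_2\otimes_{\PSL_2(A)} X_0(A^2)$.

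Next I would observe that $\partial_1(u)$ in fact lies in $F_2\otimes_{\PSL_2(A)} B_0(A^2)$, since the image of $\partial_1$ is $B_0(A^2)$, and that it is a $d^v$-cycle because $d^v(\partial_1(u))=\partial_1(d^v u)=\partial_1(\inc(z))=0$. Hence it defines a class $[\partial_1(u)]_{B_0}\in H_2(\PSL_2(A),B_0(A^2))$. Now apply the explicit formula for the connecting homomorphism $\delta$ of the short exact sequence
\[
0\arr Z_1(A^2)\overset{\inc}{\arr} X_1(A^2)\overset{\partial_1}{\arr} B_0(A^2)\arr 0
\]
to this class: the element $u$ is itself a lift of $\partial_1(u)$ along the surjection $X_1(A^2)\two B_0(A^2)$, so $\delta([\partial_1(u)]_{B_0})=[d^v u]=[\inc(z)]=[z]$ in $H_1(\PSL_2(A),Z_1(A^2))=\RP_1'(A)$. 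Thus $[\partial_1(u)]_{B_0}$ is a preimage of $[z]$ under the quotient $H_2(\PSL_2(A),B_0(A^2))\two\RP_1'(A)$ used in the construction of $\lambda_1'$. By definition, $\lambda_1'([z])$ is the image of this preimage under the map $H_2(\PSL_2(A),B_0(A^2))\arr H_2(\PSL_2(A),X_0(A^2))\simeq H_2(\PB(A),\z)$ induced by $B_0(A^2)\harr X_0(A^2)$, which is exactly $[\partial_1(u)]=d^2_{2,1}([z])$. This proves $d^2_{2,1}=\lambda_1'$, hence $E_{2,1}^3=\ker(d^2_{2,1})=\ker(\lambda_1')=\RB'(A)$. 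For the collapse statement $E_{2,1}^\infty=E_{2,1}^3$, note that $L_\bullet(A^2)$ is supported in columns $0,1,2$, so $E^r_{p,q}=0$ for $p\notin\{0,1,2\}$; any higher differential into or out of position $(2,1)$ for $r\ge 3$ originates from or lands in a column outside this range, hence vanishes.

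The main obstacle is essentially bookkeeping: one must check carefully that the zig-zag defining $d^2$ in the column spectral sequence of $D_{\bullet,\bullet}$ agrees (up to the sign conventions of the double complex) with the connecting map $\delta$ of the short exact sequence $0\arr Z_1(A^2)\arr X_1(A^2)\arr B_0(A^2)\arr 0$, and that the same choice of $u$ serves as a witness for both computations. This is a standard identification in the theory of double complexes and hyperhomology: when one column of a spectral sequence corresponds to the kernel of a map of the other columns, the induced $d^2$ differential is precisely the connecting homomorphism of the associated short exact sequence, followed by the edge map of the remaining column.
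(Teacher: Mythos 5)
Your proposal is correct and follows essentially the same route as the paper: both identify $d^2_{2,1}$ with $\lambda_1'$ by running the zig-zag for the second differential through the diagram relating $F_2\otimes X_1(A^2)$, $F_2\otimes B_0(A^2)$ and $F_1\otimes Z_1(A^2)$, and matching it with the connecting homomorphism $\delta$ of $0\arr Z_1(A^2)\arr X_1(A^2)\arr B_0(A^2)\arr 0$ followed by the edge map to $H_2(\PB(A),\z)$ (the paper traces $\delta$ forward from $H_2(\PSL_2(A),B_0(A^2))$ and then runs the $d^2$ recipe backward through the same diagram, whereas you start from a class in $E^2_{2,1}$ and exhibit a $\delta$-preimage, but this is the same computation read in the opposite direction). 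Your explicit justification of the degeneration $E^\infty_{2,1}=E^3_{2,1}$ from the three-column support of $L_\bullet(A^2)$ is a correct filling-in of a step the paper leaves implicit.
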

\begin{proof}
It is sufficient to show that $d^2_{2,1}$ can replace $\lambda'_1$ in the following diagram 
with exact rows
\[
\begin{tikzcd}
& H_2(\PT(A),\z)\ar[r]\ar[d] & H_2(\PSL_2(A),B_0(A^2)) \ar[r,"\delta"]\ar[d,"\inc_*"] & 
\RP'_1(A) \ar[r]\ar[d,"\lambda'_1"] & 0\\
0 \ar[r] & 0 \ar[r] & H_2(\PB(A),\z)\ar[r,equal] & H_2(\PB(A),\z) \ar[r] & 0.
\end{tikzcd}
\]
The map $\delta:H_2(\PSL_2(A),B_0(A^2))\arr \RP'_1(A)=H_1(\PSL_2(A),Z_1(A^2))$ is the connecting 
homomorphism of the long exact sequence associated to the short exact sequence
\[
0\arr Z_1(A^2) \overset{\inc}{\arr} X_1(A^2) \overset{\partial_1}{\arr} B_0(A^2)\arr 0.
\]
Let $\delta(\overline{x})\in\RP_1'(A)$, where $\overline{x}\in H_2(\PSL_2(A),B_0(A^2))$ is represented 
by a cycle $x\in F_2\otimes_{\PSL_2(A)} B_0(A^2)$. Here $F_{\bullet}$ is a projective resolution of 
$\z$ over $\PSL_2(A)$ (with differentials $d_{\bullet}$). Consider the diagram
\[
\begin{tikzcd}
&  F_2 \otimes_{\PSL_2(A)} X_1(A^2) \ar[r, "\id_{F_2}\otimes\partial_1"]\ar[d, "d_2\otimes\id_{X_1}"] & 
F_2 \otimes_{\PSL_2(A)} B_0(A^2) \\
F_1 \otimes_{\PSL_2(A)} Z_1(A^2) \ar[r, "\id_{F_1}\otimes\inc"] & F_1 \otimes_{\PSL_2(A)} X_1(A^2). &
\end{tikzcd}
\]
Observe that
\[
x=(\id_{F_2}\otimes\partial_1)(y)
\]
for some $y\in F_2\otimes_{\PSL_2(A)}X_1(A^2)$. For the element
$(d_2\otimes\id_{X_1})(y)\in F_1\otimes_{\PSL_2(A)} X_1(A^2)$, we have 
\[
(\id_{F_1}\otimes \partial_1)(d_2\otimes\id_{X_1})(y)=0.
\]
So there is $z\in F_1\otimes_{\PSL_2(A)} Z_1(A^2)$ such that
\[
(d_2\otimes\id_{X_1})(y)=(\id_{F_1}\otimes\inc)(z). 
\]
Note that $\delta(\overline{x})=\overline{z} \in H_1(\PSL_2(A),B_0(A^2))$. Moreover, observe that
\[
\lambda_1'(\delta(\overline{x}))=\lambda_1'(\overline{z})=\overline{(\id_{F_2}\otimes \partial_1)(x)}.
\]
Now applying $d^2_{2,1}$ to the element $\overline{z}=\delta(\overline{x})$ requires going through 
the diagram
\[
\begin{tikzcd}
F_2\otimes_{\PSL_2(A)}B_0(A^2) & F_2\otimes_{\PSL_2(A)} X_1(A^2) 
\ar[l,"\id_{F_2}\otimes\partial_1"']\ar[d,"d_2\otimes\id_{X_1}"] & \\
& F_1\otimes_{\PSL_2(A)}X_1(A^2) & F_1\otimes_{\PSL_2(A)}Z_1(A^2) 
\ar[l,"\id_{F_1}\otimes\inc"']
\end{tikzcd}
\]
upward. As in above this will end up with the element $\overline{(\id_{F_2}\otimes\partial_1)(x)}$.
Therefore $d^2_{2,1}$ coincides with $\lambda_1'$.
\end{proof}

It is straightforward to check that
\[
E_{0,1}^1\simeq \widetilde{\GG}_A \oplus A_\aa,\ \ \ \ \ 
E_{1,2}^1\simeq \PT(A)\wedge\PT(A)\simeq\frac{\aa \wedge \aa}{\mu_2(A) \wedge\aa},
\]
where $\widetilde{\GG}_A:=\GG_A/\mu_2(A)$.

\begin{lem}\label{GG}
The group $E_{1,2}^2$ is a quotient of the $2$-torsion group 
$\displaystyle\frac{\GG_A\wedge \GG_A}{\mu_2(A)\wedge \GG_A}$.
\end{lem}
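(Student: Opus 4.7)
The plan is to identify $E^2_{1,2}$ as an explicit quotient of $E^1_{1,2} \simeq (\aa \wedge \aa)/(\mu_2(A) \wedge \aa)$, and then show that the composite surjection $\aa \wedge \aa \twoheadrightarrow E^2_{1,2}$ factors through $(\GG_A \wedge \GG_A)/(\mu_2(A) \wedge \GG_A)$. Since $d^1_{1,2} = 0$ by the paragraphs preceding Lemma~\ref{d21}, we have
\[
E^2_{1,2} = E^1_{1,2}/\im(d^1_{2,2}) \simeq \frac{\aa \wedge \aa}{\mu_2(A) \wedge \aa + \im(d^1_{2,2})}.
\]
Since
\[
\frac{\GG_A \wedge \GG_A}{\mu_2(A) \wedge \GG_A} \simeq \frac{\aa \wedge \aa}{\mu_2(A) \wedge \aa + (\aa)^2 \wedge \aa},
\]
the claim reduces to showing that for every $a,b \in \aa$ the element $a^2 \wedge b = 2(a \wedge b)$ lies in $\mu_2(A) \wedge \aa + \im(d^1_{2,2})$; in particular, the resulting quotient is $2$-torsion.

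The heart of the argument is an explicit diagonal cycle construction. First I would observe that
\[
\xi := ({\pmb \infty}, {\pmb 0}) + ({\pmb 0}, {\pmb \infty})
\]
satisfies $\partial_1(\xi)=0$, so $\xi \in Z_1(A^2)$, and is fixed by every $D(c) \in \PT(A)$ since $D(c)$ fixes both ${\pmb \infty}$ and ${\pmb 0}$. Because $D(a)$ and $D(b)$ commute in $\PSL_2(A)$, the element
\[
y_{a,b} := \big([D(a)|D(b)] - [D(b)|D(a)]\big) \otimes \xi \in B_2(\PSL_2(A)) \otimes_{\PSL_2(A)} Z_1(A^2)
\]
is a $2$-cycle, defining a class in $E^1_{2,2}=H_2(\PSL_2(A), Z_1(A^2))$. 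To compute $d^1_{2,2}([y_{a,b}])$, I would view $\xi$ inside $X_1(A^2)$ and apply Shapiro's lemma to the two $\PSL_2(A)$-orbit summands of $\xi$. Noting that $({\pmb 0}, {\pmb \infty}) = w\cdot ({\pmb \infty}, {\pmb 0})$ for $w = \overline{\mtxx 0 {-1} 1 0}$, and that conjugation by $w$ acts on $\PT(A)$ by $D(c) \mapsto D(c)^{-1}$ (as used for $d^1_{1,2}$), the two Shapiro contributions sum to
\[
d^1_{2,2}([y_{a,b}]) = (D(a) \wedge D(b)) + (D(a)^{-1} \wedge D(b)^{-1}) = 2(a \wedge b) = a^2 \wedge b
\]
in $H_2(\PT(A), \z) \simeq (\aa \wedge \aa)/(\mu_2(A) \wedge \aa)$, using $a^{-1} \wedge b^{-1} = a \wedge b$ in any wedge of an abelian group. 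This exhibits $a^2 \wedge b$ in $\im(d^1_{2,2})$, completing the reduction.

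The main obstacle will be the careful sign-tracking in Shapiro's identification for the second orbit representative, namely confirming that $({\pmb 0}, {\pmb \infty})$ contributes $+(D(a)^{-1} \wedge D(b)^{-1})$ rather than its negative, so that the two contributions \emph{add} to give $2(a \wedge b)$ instead of cancelling. Consistency with the computation of $d^1_{1,2}$ in the paragraph preceding Lemma~\ref{d21} --- where the conjugation twist produces ${\bf c}(D(a)^{-1}, D(b)^{-1}) - {\bf c}(D(a), D(b))=0$, i.e.\ the difference of contributions vanishes precisely because $D(c)^{-1}\mapsto (-a)\wedge(-b)=a\wedge b$ --- supports this sign convention: here $\xi$ is a sum (not a difference) of its two orbit summands, so the same identity yields $2(a\wedge b)$ rather than $0$.
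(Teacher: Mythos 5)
Your proposal is correct and follows essentially the same route as the paper: you construct the identical cycle $([D(a)|D(b)]-[D(b)|D(a)])\otimes Y$ with $Y=({\pmb\infty},{\pmb 0})+({\pmb 0},{\pmb \infty})$, compute $d^1_{2,2}$ of its class to be $2(D(a)\wedge D(b))$, and conclude that $E^2_{1,2}$ is a quotient of $H_2(\PT(A),\z)/2\simeq(\GG_A\wedge\GG_A)/(\mu_2(A)\wedge\GG_A)$. The only cosmetic difference is that the paper packages the final identification via its Lemma~\ref{wedge} ($(\bigwedge^n_\z\Aa)/m\simeq\bigwedge^n_\z(\Aa/m)$), whereas you write the quotient of $\aa\wedge\aa$ by $\mu_2(A)\wedge\aa+(\aa)^2\wedge\aa$ directly.
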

\begin{proof}
We have seen that $E_{1,2}^1\simeq H_2(\PT(A),\z) \simeq \displaystyle\frac{\aa \wedge \aa}{\aa \wedge \mu_2(A)}$.
Consider the differential 
\[
d_{2,2}^1: H_2(\PSL_2(A),Z_1(A^2))\arr H_2(\PT(A),\z)\simeq \PT(A) \wedge \PT(A). 
\]
It is straightforward to check that 
\[
([D(a)|D(b)]-[D(b)|D(a)])\otimes Y \in B_2(\PSL_2(A))\otimes_{\PSL_2(A)} Z_1(A^2)
\]
is a cycle
and 
\[
d_{2,2}^1(\overline{([D(a)|D(b)]-[D(b)|D(a)])\otimes Y})=2(d(a)\wedge D(b)),
\]
where $Y=({\pmb\infty},{\pmb 0})+({\pmb 0},{\pmb \infty})$. 
Thus $E_{1,2}^2$ is a quotient of
\begin{align*}
H_2(\PT(A),\z)/2 & 
\simeq \displaystyle \frac{\aa\wedge \aa}{2(\aa\wedge \aa)+(\aa\wedge \mu_2(A))} \\
& \simeq \frac{(\aa\wedge \aa)/2(\aa\wedge \aa)}{\Big(2(\aa\wedge \aa)+(\aa\wedge \mu_2(A))\Big)/2(\aa\wedge \aa)}.
\end{align*}
The rest follows from the following lemma.
\end{proof}

\begin{lem}\label{wedge}
Let $\Aa$ be an abelian group. Then for any positive integers $m$ and $n$, we have the isomorphism
\[
\begin{array}{c}
(\bigwedge_\z^n \Aa)/m  \simeq \bigwedge_\z^n (\Aa/m)\simeq \bigwedge_{\z/m}^n (\Aa/m).
\end{array}
\]
\end{lem}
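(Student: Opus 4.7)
The plan is to establish the two isomorphisms separately, using the universal property of exterior powers and the fact that the $n$-th exterior power functor preserves surjections.

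For the first isomorphism $(\bigwedge_\z^n \Aa)/m \simeq \bigwedge_\z^n(\Aa/m)$, I would start from the surjection $\pi: \Aa \two \Aa/m$. Since exterior powers preserve surjections (this follows directly from their construction as a quotient of the tensor power), the induced map $\bigwedge^n\pi:\bigwedge^n\Aa \two \bigwedge^n(\Aa/m)$ is surjective. Because $m(a_1\wedge\cdots\wedge a_n)=(ma_1)\wedge a_2\wedge\cdots\wedge a_n$ lies in the kernel, $\bigwedge^n\pi$ factors through a surjective map $(\bigwedge^n\Aa)/m \two \bigwedge^n(\Aa/m)$. To produce its inverse, I would define
\[
(\Aa/m)^n \arr (\bigwedge\nolimits^n\Aa)/m, \qquad (\overline{a_1},\dots,\overline{a_n}) \mt \overline{a_1\wedge\cdots\wedge a_n},
\]
and check well-definedness: replacing $a_i$ by $a_i+mb$ adds the term $m(a_1\wedge\cdots\wedge b\wedge\cdots\wedge a_n)$, which vanishes modulo $m$. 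Since this assignment is $\z$-multilinear and alternating, the universal property of $\bigwedge_\z^n(\Aa/m)$ produces the required inverse.

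For the second isomorphism $\bigwedge_\z^n(\Aa/m)\simeq \bigwedge_{\z/m}^n(\Aa/m)$, I would exploit that $\Aa/m$ is already a $\z/m$-module, so
\[
m(x_1\wedge\cdots\wedge x_n)=(mx_1)\wedge x_2\wedge\cdots\wedge x_n=0
\]
in $\bigwedge_\z^n(\Aa/m)$. Hence this group is naturally a $\z/m$-module. On a $\z/m$-module, $\z$-multilinearity and $\z/m$-multilinearity are equivalent, so the two exterior powers satisfy the same universal property: both classify alternating multilinear maps out of $(\Aa/m)^n$ into $\z/m$-modules. The comparison map in either direction is therefore an isomorphism.

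The only mildly subtle point is the well-definedness check in the construction of the inverse in the first step, but this reduces to the observation that $m$ can be absorbed into one wedge factor. Everything else is a formal appeal to universal properties, so no serious obstacle is expected.
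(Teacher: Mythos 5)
Your argument is correct and is essentially the proof the paper gives: the paper likewise defines the two maps $\overline{a_1\wedge\cdots\wedge a_n}\mapsto \overline{a_1}\wedge\cdots\wedge\overline{a_n}$ and its inverse, asserts well-definedness, and notes that the second isomorphism follows naturally. You have simply filled in the details (the kernel containing $m\bigwedge^n\Aa$, absorbing $m$ into one wedge factor, and the coincidence of the $\z$- and $\z/m$-universal properties on $\z/m$-modules), all of which are accurate.
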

\begin{proof}
The maps  $(\bigwedge_\z^n \Aa)/m  \arr \bigwedge_\z^n (\Aa/m)$, given by 
$\overline{a_1\wedge \dots \wedge a_n}\mapsto \overline{a_1}\wedge \dots \wedge \overline{a_1}$, and
$\bigwedge_\z^n (\Aa/m)\arr (\bigwedge_\z^n \Aa)/m$, given by 
$\overline{a_1}\wedge \dots \wedge \overline{a_1}\mapsto \overline{a_1\wedge \dots \wedge a_n}$, are 
well-defined and one is the inverse of the other. The other isomorphism follows naturally.
\end{proof}

In general we think that $E_{1,2}^2$ always is trivial! We prove this in the following
interesting case.

\begin{lem}\label{surj1}
If $-1\in \aa^2$, then the differential $d^1_{2,2}$ is surjective. In particular, $E_{1,2}^2=0$.
\end{lem}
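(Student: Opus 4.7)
The plan is to enlarge the image of $d^1_{2,2}$ computed in Lemma~\ref{GG} by using the additional 2-torsion element $J:=D(i)\in\PT(A)$ provided by the hypothesis, together with the Weyl element
$w:=\overline{\smtxx{0}{1}{-1}{0}}\in\PSL_2(A)$.

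From the proof of Lemma~\ref{GG} the image of $d^1_{2,2}$ already contains
$2\,(D(a)\wedge D(b))$ for every $a,b\in\aa$, produced by the cycles
$\bigl([D(a)|D(b)]-[D(b)|D(a)]\bigr)\otimes Y$. Thus the image contains $2\cdot H_2(\PT(A),\z)$. The hypothesis $-1=i^2$ gives $\mu_2(A)\subseteq(\aa)^2$, which implies $\PT(A)/\PT(A)^2\simeq\GG_A$; so by Lemma~\ref{wedge} the cokernel of $d^1_{2,2}$ is a quotient of $\bigwedge^{2}\GG_A$. It remains to show that the image also contains, for every $a,b\in\aa$, a representative of $D(a)\wedge D(b)$ modulo $2\bigwedge^{2}\PT(A)$.

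The key structural input provided by $-1=i^2$ is that $J=D(i)$ satisfies $J^{2}=D(-1)=1$ in $\PSL_2(A)$, and that $J$ and $w$ commute in $\PSL_2(A)$: indeed $wJw^{-1}=D(i^{-1})=D(-i)=D(i)=J$, the last equality holding in $\PT(A)=\aa/\mu_2(A)$ since $-i\in\mu_2(A)\cdot i$. Hence $\langle J,w\rangle\subseteq\PSL_2(A)$ is a Klein four-subgroup that stabilises $Y=({\pmb\infty},{\pmb 0})+({\pmb 0},{\pmb\infty})$. I would then, for each $a\in\aa$, assemble a 2-chain of the schematic form
\[
\xi_{a}:=\bigl([D(a)|w]-[w|D(a)]\bigr)\otimes Y \;+\; (\text{bar terms in }\PT(A))\otimes Y
\]
in $B_2(\PSL_2(A))\otimes_{\PSL_2(A)}Z_1(A^2)$, where the correction terms in $\PT(A)$ are chosen so that the ``off-torus'' contribution to $d([D(a)|w]-[w|D(a)])\otimes Y=([D(a)w]-[wD(a)])\otimes Y=([D(a)w]-[D(a^{-1})w])\otimes Y$ is cancelled. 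The cancellation uses $J$: since $D(a)D(a^{-1})=1$ and since $J$-translates of bar elements remain in $\PT(A)$, the term $[D(a)w]-[D(a^{-1})w]$ can be rewritten as a bar boundary inside $\PT(A)\otimes Y$ after adding $J$-dependent correction, producing an actual cycle $\xi_{a}$.

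Once $\xi_{a}$ is a cycle, I would compute its image under $d^{1}_{2,2}$ through the chain-level Shapiro isomorphism $H_{2}(\PSL_2(A),X_{1}(A^{2}))\simeq H_{2}(\PT(A),\z)\simeq\bigwedge^{2}\PT(A)$, using the decomposition $Y=1\cdot({\pmb\infty},{\pmb 0})+w\cdot({\pmb\infty},{\pmb 0})$ and the fact that $w$-translation reverses the $\PT(A)$-bar complex via the automorphism $D(c)\mapsto D(c^{-1})$, which acts trivially on $\bigwedge^{2}\PT(A)$. The term $\bigl([D(a)|w]-[w|D(a)]\bigr)\otimes Y$ then contributes (up to $2\bigwedge^{2}\PT(A)$) precisely $D(a)\wedge D(i)$, by identifying the image of $w$ in $B_{2}\otimes_{\PT(A)}\z$ with $J=D(i)$ through the relation $w^{2}=1=J^{2}$. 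Varying $a$ produces every element $\langle a\rangle\wedge\langle i\rangle\in\bigwedge^{2}\GG_{A}$; an entirely parallel construction, replacing $w$ by the element $wD(b)\in\PSL_2(A)$ whenever it commutes with $D(a)J$ (a condition that can be forced by symmetrising over $a\leftrightarrow a^{-1}$), yields the full $\langle a\rangle\wedge\langle b\rangle$. Combined with the image $2\bigwedge^{2}\PT(A)$ already available, these cycles cover all of $\bigwedge^{2}\PT(A)$, and hence $d^{1}_{2,2}$ is surjective. Since $d^{1}_{1,2}=0$ was shown above, this yields $E^{2}_{1,2}=0$.

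The main obstacle is step three: verifying that the $J$-corrected 2-chain $\xi_{a}$ is genuinely a cycle, and then translating its image through Shapiro's lemma into a concrete element of $\bigwedge^{2}\PT(A)$. The cleanest way to carry this out is probably a direct diagrammatic computation in the double complex $B_{\bullet}(\PSL_2(A))\otimes_{\PSL_2(A)}L_{\bullet}(A^{2})$, comparing with the analogous $\SL_2$-construction in \cite{B-E--2023} and using the morphism of spectral sequences from the proof of Lemma~\ref{d21} to transport the identifications between the two settings.
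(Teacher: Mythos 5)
Your opening reduction is sound --- the proof of Lemma~\ref{GG} already places $2\big(\bigwedge^2_{\z}\PT(A)\big)$ in the image of $d^1_{2,2}$, so it would suffice to hit each $D(a)\wedge D(b)$ modulo $2$ --- but the construction that is supposed to supply the missing classes has a genuine gap, and it sits exactly at the point you yourself flag as ``the main obstacle.'' A chain of the shape $([D(a)|w]-[w|D(a)])\otimes Y+(\text{bar terms in }\PT(A))\otimes Y$ can never be completed to a cycle. Since $Y$ is fixed by $\PT(A)$ and by $w$, the boundary of the first summand is $([wD(a)]-[D(a)w])\otimes Y=([D(a^{-1})w]-[D(a)w])\otimes Y$, supported on basis elements indexed by the off-torus group elements $D(a^{\pm1})w$; whereas the boundary of any $[t_1|t_2]\otimes Y$ with $t_1,t_2\in\PT(A)$ is supported on elements $[t]\otimes Y$ with $t\in\PT(A)$. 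No ``$J$-dependent correction'' lying in $(\text{bar terms in }\PT(A))\otimes Y$ can cancel these terms, and the fact that $J=D(i)$ commutes with $w$ does not help. The paper's cycle $X_{a,b}$ is forced, for precisely this reason, to include chains tensored with elements of $Z_1(A^2)$ \emph{other than} $Y$ --- namely the terms $[wD(iz)|wD(iz)]\otimes\partial_2(X_z)$ for $z=ab,a,b,1$, where $wD(iz)$ has order $2$ in $\PSL_2(A)$ --- and it is in checking that these extra terms close up the boundary that the hypothesis $i^2=-1$ is actually used. Your proposal omits this ingredient entirely, so the existence of the cycles $\xi_a$ is not established.

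The second half of the argument is also not in place. Even granting a cycle contributing $D(a)\wedge D(i)$, the passage to a general $D(a)\wedge D(b)$ by ``replacing $w$ by $wD(b)$ whenever it commutes with $D(a)J$'' rests on a commutation condition ($wD(b)$ commutes with $D(a)$ in $\PSL_2(A)$ only when $a^2\in\mu_2(A)$) that fails for general $a$, and symmetrising over $a\leftrightarrow a^{-1}$ does not remove it. By contrast, the paper writes down one explicit closed chain $X_{a,b}$ for each pair $(a,b)$ and traces it through the Shapiro isomorphism (via an explicit coset section, Lemma~\ref{G-H}) to land exactly on $\hat{a}\wedge\hat{b}$, not merely on a class modulo $2$. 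Your mod-$2$ reduction would genuinely shorten that final bookkeeping, but the essential content --- an explicit cycle in $B_2(\PSL_2(A))\otimes_{\PSL_2(A)}Z_1(A^2)$ mapping onto $\hat{a}\wedge\hat{b}$ --- is missing. A minor further point: the identification $\PT(A)/\PT(A)^2\simeq\GG_A$ requires $\mu_2(A)\subseteq(\aa)^2$, which follows from $-1\in\aa^2$ only when $\mu_2(A)=\{\pm1\}$ (e.g.\ for domains), whereas the lemma is stated without that restriction.
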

\begin{proof}
In this proof, for simplicity, we denote the matrix  $D(a)$ with $\hat{a}$.
Let $i\in \aa$ such that $i^2=-1$. 
For $a\in A^{\times}$, denote  $({\pmb \infty}, {\pmb 0}, {\pmb a})\in X_2(A^2)$ 
by $X_a$. Let $Y:=({\pmb\infty},{\pmb 0})+({\pmb 0},{\pmb \infty})\in Z_1(A^2)$. 
For $a,b\in \aa$, let 
\[
\lambda(a,b)\in H_2(\PSL_2(A),Z_1(A))=H_2(B_\bullet(\PSL_2(A))\otimes_{\PSL_2(A)} Z_1(A))
\]
 be represented by $X_{a,b}\in B_2(\PSL_2(A))\otimes_{\PSL_2(A)} Z_1(A^2)$, where
\begin{align*}
X_{a,b}&:=([\hat{a}|\hat{b}]+[\hat{i}|\widehat{ab}]-[\hat{i}|\hat{a}]-[\hat{i}|\hat{b}]+[w|\widehat{iab}]
-[w|\widehat{ia}]-[w|\widehat{ib}]+[w|\hat{i}])\otimes Y\\
&+[w\widehat{iab}|w\widehat{iab}]\otimes \partial_2(X_{ab})-[w\widehat{ia}|w\widehat{ia}]\otimes 
\partial_2(X_a)-[w\widehat{ib}|w\widehat{ib}]\otimes\partial_2(X_b)\\
&+ [w\hat{i}|w\hat{i}]\otimes \partial_2(X_1).
\end{align*}
Recall that $w=\overline{\mtxx{0}{1}{-1}{0}}$. (Observe that we need the condition $i^2=-1$ to show that 
$X_{a,b}$ is a cycle.) Now 
\[
d^1_{2,2}(\lambda(a,b))\in H_2(\PSL_2(A),X_1(A^2))=H_2(B_\bullet(\PSL_2(A))\otimes_{\PSL_2(A)} X_1(A))
\]
is represented by the cycle $Y_{a,b}\in B_2(\PSL_2(A))\otimes_{\PSL_2(A)} X_1(A)$, where
\begin{align*}
Y_{a,b}&:=(w+1)([\hat{a}|\hat{b}]+[\hat{i}|\widehat{ab}]-[\hat{i}|\hat{a}]-
[\hat{i}|\hat{b}]+[w|\widehat{iab}]-[w|\widehat{ia}]-[w|\widehat{ib}]+[w|\hat{i}])\otimes ({\pmb \infty}, {\pmb 0})\\
&+(g_{ab}^{-1}-h_{ab}^{-1}+1)([w\widehat{iab}|w\widehat{iab}])\otimes ({\pmb \infty}, {\pmb 0})
-(g_{a}^{-1}-h_{a}^{-1}+1)([w\widehat{ia}|w\widehat{ia}])\otimes ({\pmb \infty}, {\pmb 0})\\
&-(g_{b}^{-1}-h_{b}^{-1}+1)([w\widehat{ib}|w\widehat{ib}])\otimes ({\pmb \infty}, {\pmb 0})
+(g_1^{-1}-h_1^{-1}+1)([w\hat{i}|w\hat{i}])\otimes ({\pmb \infty}, {\pmb 0}),
\end{align*}
with
$g_x:=\overline{\begin{pmatrix}
0 & 1\\
-1 & x
\end{pmatrix}}$ and 
$h_x:=\overline{\begin{pmatrix}
1 & x^{-1}\\
0 & 1
\end{pmatrix}}$.
The morphisms
\[
B_\bullet(\PSL_2(A))\otimes_{\PSL_2(A)} X_1(A) \arr B_\bullet(\PSL_2(A))\otimes_{\PT(A)} \z 
\arr C_\bullet(\PSL_2(A))\otimes_{\PT(A)} \z,
\]
where on degree $n$ is given by
\[
[g_1|\cdots|g_n]\otimes ({\pmb\infty}, {\pmb 0}) \mt [g_1|\cdots|g_n]\otimes 1 
\mt \otimes (1, g_1, \dots, g_1\cdots g_n)\otimes 1,
\]
induce the isomorphisms

\begin{align*}
H_2(B_\bullet(\PSL_2(A))\otimes_{\PSL_2(A)} X_1(A)) & \simeq H_2(B_\bullet(\PSL_2(A))\otimes_{\PT(A)} \z)\\
& \\
&\simeq  H_2(C_\bullet(\PSL_2(A))\otimes_{\PT(A)} \z).
\end{align*}
Following these maps we see that $d^1_{2,2}(\lambda(a,b))=\overline{Y_{a,b}'} \in H_2(C_\bullet(\PSL_2(A))\otimes_{\PT(A)} \z)$,
where $Y_{a,b}'\in C_2(\PSL_2(A))\otimes_{\PT(A)} \z$ is the cycle
\begin{align*}
Y_{a,b}'&:=((w,w\hat{a},w\widehat{ab})+(w,w\hat{i},w\widehat{iab})-(w,w\hat{i},w\widehat{ia})-(w,w\hat{i},w\widehat{ib})\\
&+(w,1,\widehat{iab})-(w,1,\widehat{ia})-(w,1,\widehat{ib})+(w,1,\hat{i})\\
&+(1,\hat{a},\widehat{ab})+(1,\hat{i},\widehat{iab})+(1,\hat{i},\widehat{ia})+(1,\hat{i},\widehat{ib})\\
&+(1,w,w\widehat{iab})+(1,w,w\widehat{ia})+(1,w,w\widehat{ib})+(1,w,w\hat{i}))\otimes 1\\
&+((g^{-1}_{ab},g^{-1}_{ab}w\widehat{iab},g^{-1}_{ab})
-(h^{-1}_{ab},h^{-1}_{ab}w\widehat{iab},h^{-1}_{ab})+(1,w\widehat{iab},1))\otimes 1\\
&-((g^{-1}_{a},g^{-1}_{a}w\widehat{ia},g^{-1}_{a})-(h^{-1}_{a},h^{-1}_{a}w\widehat{ia},h^{-1}_{a})
+(1,w\widehat{ia},1))\otimes 1\\
&-((g^{-1}_{b},g^{-1}_{b}w\widehat{ib},g^{-1}_{b}))-(h^{-1}_{b},h^{-1}_{b}w\widehat{ib},h^{-1}_{b})
+(1,w\widehat{ib},1)\otimes 1\\
&+((g^{-1}_1,g^{-1}_1w\hat{i},g^{-1}_1)-(h^{-1}_1,h^{-1}_1w\hat{i},h^{-1}_1)+(1,w\hat{i},1))\otimes 1.
\end{align*}
We know that
\[
H_2(C_\bullet(\PSL_2(A))\otimes_{\PT(A)} \z)\simeq H_2(C_\bullet(\PT(A))\otimes_{\PT(A)} \z).
\]
We want to find a representative of $d^1_{2,2}(\lambda(a,b))$ in $C_2(\PT(A))\otimes_{\PT(A)}\z$. Let  
\[
s:\PT(A)\backslash\PSL_2(A)\to \PSL_2(A)
\]
be any (set-theoretic) section of the canonical projection $\pi:\PSL_2(A)\to \PT(A)\backslash\PSL_2(A)$, 
$g \mapsto \PT(A)g$. For $g\in \PSL_2(A)$, set
\[
\overline{g}:=g(s\circ \pi(g))^{-1}=gs(gH)^{-1}.
\]
Then by Lemma \ref{G-H} below, the morphism
\[
C_\bullet(\PSL_2(A))\otimes_{\PT(A)} \z \overset{s_\bullet}{\larr} C_\bullet(\PT(A))\otimes_{\PT(A)}\z, 
\ \ \ (g_0,\dots,g_n)\otimes 1 \mt (\overline{g_0},\dots,\overline{g_n})\otimes 1
\]
induces the above isomorphism. Choose a section 
\[
s:\PSL_2(A)\backslash \PT(A)\to \PSL_2(A)
\]
such that
\[
s(\PT(A)\begin{pmatrix}
a&b\\
c&d
\end{pmatrix})
=
\begin{pmatrix}
1&a^{-1}b\\
ac&ad
\end{pmatrix}, \ \ \ \ \ \  \text{if  $a\in \aa$,}\\
\]
and
\[
s(\PT(A)\begin{pmatrix}
a&b\\
c&d
\end{pmatrix})
=
\begin{pmatrix}
0&1\\
-1&bd
\end{pmatrix}, \ \ \ \ \ \ \text{if $a=0$.}
\]
It is straightforward to check that $d^1_{2,2}(\lambda(a,b))=\overline{T_{a,b}} \in H_2(C_\bullet(\PT(A))\otimes_{\PT(A)}\z)$, 
where
\begin{align*}
T_{a,b}&:=\bigg((1,{\hat{a}^{-1}},{(\widehat{ab})^{-1}})+(1,{\hat{i}},{\hat{i}(\widehat{ab})^{-1}})
-(1,{\hat{i}},{\hat{i}\hat{a}^{-1}})-(1,{\hat{i}},{\hat{i}\hat{b}^{-1}})\\
&+(1,\hat{a},\widehat{ab})+(1,\hat{i},\widehat{iab})-(1,\hat{i},\widehat{ia})-(1,\hat{i},\widehat{ib})+(1,{\hat{i}(\widehat{ab})^{-1}},1)\\
&-(1,{\hat{i}\hat{a}^{-1}},1)-(1,{\hat{i}\hat{b}^{-1}},1)+(1,\hat{i},1)\bigg)\otimes 1.
\end{align*}
In $H_2(B_{\bullet}(\PT(A))\otimes_{\PT(A)} \z)$ this elements corresponds to $d^1_{2,2}({\lambda}(a,b))=\overline{S_{a,b}}$,
where
\begin{align*}
S_{a,b}&:=([{\hat{a}^{-1}}|{\hat{b}^{-1}}]+[\hat{i}|{(\widehat{ab})^{-1}}]-[\hat{i}|{\hat{a}^{-1}}]-[\hat{i}|{\hat{b}^{-1}}]
+[\hat{a}|\hat{b}]+[\hat{i}|\widehat{ab}]-[\hat{i}|\hat{a}]-[\hat{i}|\hat{b}]\\
&+[{\hat{i}\hat{a}^{-1}\hat{b}^{-1}}|\widehat{iab}]-[{\hat{i}\widehat{a}^{-1}}|\widehat{ia}]
-[{\hat{i}\hat{b}^{-1}}|\widehat{ib}]+[\hat{i}|\hat{i}])\otimes 1.
\end{align*}
Now by adding the null element (in $B_2(\PT(A))\otimes_{\PT(A)} \z$)
\[
d_3(-[\hat{i}|{\hat{a}^{-1}}|{\hat{b}^{-1}}]-[\hat{i}|\hat{a}|\hat{b}]+[{\hat{i}\hat{a}^{-1}}|{\hat{b}^{-1}}|\widehat{iab}]
-[{\hat{b}^{-1}}|\hat{b}|\widehat{ia}]-[\hat{i}|{\hat{b}^{-1}}|\widehat{ib}]+[{\hat{b}^{-1}}|\hat{b}|\hat{i}])\otimes 1,
\]
it is easy to see that
\[
d^1_{2,2}(\lambda(a,b))=\overline{([\widehat{ia}|\hat{b}]-[\hat{b}|\widehat{ia}]-[\hat{i}|\hat{b}]+[\hat{b}|\hat{i}])\otimes 1}
\in H_2(B_{\bullet}(\PT(A))\otimes_{\PT(A)} \z).
\]
Note that
\[
H_2(B_{\bullet}(\PT(A))\otimes_{\PT(A)} \z)=H_2(\PT(A),\z)\simeq \PT(A) \wedge \PT(A).
\]
In $\PT(A) \wedge \PT(A)$, we have 
\[
d^1_{2,2}(\lambda(a,b))=\widehat{ia}\wedge \hat{b}-\hat{i}\wedge \hat{b}=\hat{a}\wedge \hat{b}.
\]
This shows that $d^1_{2,2}$ is surjective.
\end{proof}

\begin{lem}\label{G-H}
Let $H$ be a subgroup of $G$ an $H\backslash G$ the set of right cosets of $H$ in $G$. 
Let $s:H\backslash G \arr G$ be any (set theoretic)
section of the natural map (of sets) $\pi:G \arr H\backslash G$, $g \mt Hg$. For $g \in G$, let 
$\overline{g}:=g\Big(s\circ\pi(g)\Big)^{-1}$. Then the map $\phi_n:C_n(G) \arr C_n(H)$
given by $(g_0, \dots, g_n) \mt (\overline{g_0}, \dots, \overline{g_n})$ induces a left 
$H$-morphism of the standard complexes $\phi_\bullet:C_\bullet(G) \arr C_\bullet(H)$ and, 
for any $k$, the homomorphism 
\[
H_k(H,\z)= H_k(C_\bullet(G)\otimes_H \z) \arr H_k(C_\bullet(H)\otimes_H \z) = H_k(H,\z)
\]
coincides with the identity map $\id_{H_k(H,\z)}$.
\end{lem}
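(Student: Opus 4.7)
The plan is to verify three items in order: each $\phi_n$ takes values in $C_n(H)$ and is $H$-equivariant; the family $\phi_\bullet$ commutes with the differentials; and the induced map on $H_k(-\otimes_H\z)$ agrees with the identity on $H_k(H,\z)$.

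For the first item I would write $s\pi(g)=h_g g$ for the unique $h_g\in H$ (which exists because $s\pi(g)\in Hg$), so that $\overline{g}=g(h_g g)^{-1}=h_g^{-1}\in H$; this places each tuple $(\overline{g_0},\dots,\overline{g_n})$ inside $C_n(H)=\z[H^{n+1}]$. Equivariance follows from the observation that $\pi(hg)=\pi(g)$ for $h\in H$, which gives $\overline{hg}=h\overline{g}$ and therefore $\phi_n(h\cdot(g_0,\dots,g_n))=h\cdot\phi_n(g_0,\dots,g_n)$. The chain-map property is essentially automatic, since the differential $d$ is an alternating sum of face maps that delete a single coordinate while $\phi$ acts componentwise without interaction between coordinates.

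The interesting content lies in the last item, and here I would appeal to the comparison theorem for projective resolutions. Restricting scalars makes $C_\bullet(G)\arr\z$ into a free $H$-resolution of $\z$, because each $C_n(G)=\z[G^{n+1}]$ is $\z[G]$-free and hence $\z[H]$-free. The canonical inclusion $\iota_\bullet:C_\bullet(H)\harr C_\bullet(G)$ is an $H$-chain map between two free $H$-resolutions of $\z$ lifting $\id_\z$, and the induced isomorphism $\iota_\ast:H_k(C_\bullet(H)\otimes_H\z)\simeq H_k(C_\bullet(G)\otimes_H\z)$ is precisely the identification $H_k(H,\z)=H_k(C_\bullet(G)\otimes_H\z)$ that appears implicitly in the statement. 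The composite $\phi_\bullet\circ\iota_\bullet:C_\bullet(H)\arr C_\bullet(H)$ is then another $H$-chain endomorphism of the free $H$-resolution $C_\bullet(H)\arr\z$ lifting $\id_\z$, so by the uniqueness clause of the comparison theorem it is $H$-chain homotopic to $\id_{C_\bullet(H)}$. Passing to $-\otimes_H\z$ then yields $\phi_\ast\circ\iota_\ast=\id$ on $H_k(H,\z)$, which is the desired assertion.

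The substantive step is the final one, but it becomes routine once one notes that $\iota_\ast$ realizes the implicit identification in the statement. A more hands-on alternative would be to write down an explicit $H$-contracting homotopy between $\phi_\bullet\circ\iota_\bullet$ and $\id_{C_\bullet(H)}$ using the standard cone construction on $C_\bullet(H)$; the comparison-theorem argument simply bypasses that calculation.
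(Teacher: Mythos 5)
Your proof is correct, and it reaches the conclusion by a more abstract route than the paper. The preliminary verifications (that $\overline{g}=h_g^{-1}\in H$, that $\overline{hg}=h\overline{g}$, and that $\phi_\bullet$ commutes with the face maps) match the paper's. The difference is in the homological step: the paper argues entirely by hand, computing that the composite $\phi_\bullet\circ\inc_\bullet$ on $C_\bullet(H)\otimes_H\z$ is (essentially) the identity at the chain level, and then writing down an explicit contracting homotopy $s_n(g_0,\dots,g_n)=(1,\overline{g_0},\dots,\overline{g_n})-(1,g_0,\dots,g_n)$ to show that $\inc_\bullet\circ\phi_\bullet$ also induces the identity on $H_k(C_\bullet(G)\otimes_H\z)$. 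You instead observe that $C_\bullet(G)$, restricted to $H$, is a free $H$-resolution of $\z$, so that $\iota_\bullet$ and $\phi_\bullet\circ\iota_\bullet$ are both augmentation-preserving $H$-chain maps between free resolutions, and invoke the uniqueness clause of the comparison theorem to conclude $\phi_\bullet\circ\iota_\bullet\simeq\id$ without producing the homotopy; since $\iota_\ast$ is itself the canonical identification $H_k(H,\z)\simeq H_k(C_\bullet(G)\otimes_H\z)$, this suffices and the second composite never needs to be examined. Your version is shorter and makes the conceptual reason transparent (indeed, $\phi_\bullet$ alone is already a chain map of free $H$-resolutions over $\id_\z$, so it must induce the canonical identification); the paper's version is self-contained at the price of an explicit homotopy, and as a by-product exhibits the concrete chains realizing the homotopy, which is in the spirit of the very explicit cycle manipulations carried out elsewhere in the article.
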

\begin{proof}
It is straightforward to check that $\phi_\bullet$ is a morphism of complexes. Moreover,
note that for any $h\in H$, 
\begin{align*}
\phi_n(h(g_0,\dots,g_n))&=(\overline{hg_0},\dots,\overline{hg_n})\\
&=((hg_0)s(Hhg_0)^{-1}, \dots, (hg_n)s(Hhg_n)^{-1})\\
&=(hg_0s(Hg_0)^{-1}, \dots, hg_0s(Hg_n)^{-1})\\
&=h\phi_n(g_0,\dots,g_n).
\end{align*}
Thus $\phi_\bullet$ is a $H$-morphism. The inclusion $\inc: H \arr G$, induces the morphism 
\[
\inc_\bullet: C_\bullet(H) \arr C_\bullet(G),
\]
which induces the homomorphism
\[
H_n(H,\z)=H_n(C_\bullet(H)\otimes_H \z) \arr H_n(C_\bullet(G)\otimes_H \z)= H_n(H,\z).
\]
Let study the compositions $(\inc_\bullet\circ\phi_\bullet)\otimes_H \id_\z$ and 
$(\phi_\bullet\circ\inc_\bullet)\otimes_H \id_\z$. If $h'=s(H)\in H$, then the composition
\[
C_n(H)\otimes_H \z \overset{\inc_n\otimes \id_\z}{\larr} C_n(G)\otimes_H \z \overset{\phi_n\otimes 
\id_\z}{\larr} C_n(H)\otimes_H \z
\]
is given by
\[
(h_0,\dots,h_n)\otimes 1 \mapsto (\overline{h_0},\dots,\overline{h_n})\otimes 1
=(h_0{h'}^{-1}, \dots, h_n{h'}^{-1})\otimes 1=(h_0, \dots, h_n)\otimes 1.
\]
Thus 
\[
H_n(\phi_\bullet)\circ H_n(\inc_\bullet)=\id_{H_n(H,\z)}. 
\]
On the other hand, the composition
\[
C_n(G)\otimes_H \z \overset{\phi_n\otimes \id_\z}{\larr} C_n(H)\otimes_H \z \overset{\inc_n\otimes 
\id_\z}{\larr} C_n(G)\otimes_H \z
\]
is given by
\[
(g_0,\dots,g_n)\otimes 1 \mapsto (\overline{g_0},\dots,\overline{g_n})\otimes 1.
\]
It is straightforward to check that the maps
\[
s_{-1}=0: C_{-1}(G)=0 \arr C_0(G), 
\]
\[
s_n:C_n(G)\arr C_{n+1}(G), \ \ \ (g_0,\dots,g_n) \mapsto (1,\overline{g_0},\dots,\overline{g_n})-(1, g_0,\dots,g_n)
\]
define a homotopy between  $(\inc_\bullet\otimes \phi_\bullet)\otimes \id_\z$ and the identity morphism
$\id_\bullet:C_\bullet(G)\otimes_H \z \arr C_\bullet(G)\otimes_H \z$. Therefore, 
\[
H_n(\inc_\bullet)\circ H_n(\phi_\bullet)=\id_{H_n(H,\z)}. 
\]
This proves our claim.
\end{proof}

\begin{rem}
From the central extension $ 1 \arr \mu_2(A) \arr \SL_2(A) \arr \PSL_2(A) \arr 1$ we obtain the 
Lyndon/Hochschild-Serre spectral sequence
\[
\EE_{p,q}^2=H_p(\PSL_2(A), H_q(\mu_2(A), Z_1(A^2)))\Rightarrow H_{p+q}(\SL_2(A), Z_1(A^2)).
\]
Since the action of $\mu_2(A)$ on $Z_1(A^2)$ is trivial, from this spectral sequence we obtain the 
exact sequence
\[
\mu_2(A)\otimes_\z \RP_1'(A)\arr H_2(\SL_2(A), Z_1(A^2)) \arr H_2(\PSL_2(A), Z_1(A^2)) \arr 
\mu_2(A)\otimes_\z \GW'(A) 
\]
\[
\arr H_1(\SL_2(A), Z_1(A^2)) \arr \RP_1'(A) \arr 0.
\]
Now the element $\lambda(a,b)\in H_2(\PSL_2(A), Z_1(A^2))$, from the proof of Lemma \ref{surj1}, 
maps to the element
\[
(-1)\otimes(\Lan ab\Ran-\Lan a\Ran -\Lan b\Ran)\in \mu_2(A)\otimes_\z \GW'(A),
\]
where 
\[
\Lan a\Ran:=\lan a\ran-1=\partial_2(({\pmb \infty},{\pmb 0},{\pmb a})-({\pmb\infty},{\pmb 0},{\pmb 1})).
\]
To construct $\lambda(a,b)$ we used the assumption $-1 \in \aa^2$.
We believe that an element $\lambda(a,b)$ with the above condition exists over any ring. 
\end{rem}

Let $\Aa$ be an abelian group. Let $\sigma_1:\tors(\Aa,\Aa)\arr \tors(\Aa,\Aa)$
be obtained by interchanging the copies of $\Aa$. This map is induced by
the involution $\Aa\otimes_\z \Aa \arr \Aa\otimes_\z \Aa$, $a\otimes b\mapsto -b\otimes a$
\cite[\S2]{mmo2022}.
Let $\Sigma_2'=\{1, \sigma'\}$ be the symmetric group of order 2. Consider the following action of 
$\Sigma_2'$ on $\tors(\Aa,\Aa)$:
\[
(\sigma', x)\mapsto -\sigma_1(x).
\]

To study $E_{0,3}^2$ we need the following well-known fact. 

\begin{prp}\label{H3A}
For any abelian group $\Aa$ we have the exact sequence
\[
\begin{array}{c}
0 \arr \bigwedge_\z^3 \Aa\arr H_3(\Aa,\z) \arr \tors(\Aa,\Aa)^{\Sigma_2'}\arr 0.
\end{array}
\]
\end{prp}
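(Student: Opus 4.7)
The plan is a standard reduction: first to finitely generated abelian groups via direct limits, then to cyclic groups via the Künneth formula, and finally a direct computation on cyclic groups. The natural map on the left is the Pontryagin (shuffle) product from the bar resolution, and injectivity together with identification of the cokernel both follow once the cyclic case is in hand.

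First I would verify that all three functors $\bigwedge^3_\z(-)$, $H_3(-,\z)$, and $\tors(-,-)^{\Sigma_2'}$ commute with filtered colimits of abelian groups (for group homology this is a standard fact about classifying spaces and singular homology; the other two are elementary). Since every abelian group $\Aa$ is the filtered colimit of its finitely generated subgroups and the two maps in the sequence are natural, this reduces us to finitely generated $\Aa$. Writing $\Aa=\z^r\oplus\bigoplus_i\z/n_i$, I would apply the Künneth formula iteratively to expand $H_3(\Aa,\z)$, and then match the tensor-product summands with those of $\bigwedge^3\Aa$ via the Pontryagin shuffle product, and the $\tors$-summands with $\tors(\Aa,\Aa)^{\Sigma_2'}$.

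This reduces the problem to $\Aa$ cyclic. For $\Aa=\z$ all three groups vanish (note $B\z\simeq S^1$). For $\Aa=\z/n$, the standard periodic resolution gives $H_3(\z/n,\z)\simeq\z/n$, while $\bigwedge^3(\z/n)=0$ and $\tors(\z/n,\z/n)\simeq\z/n$; one checks that under the convention $a\otimes b\mapsto-b\otimes a$ the swap $\sigma_1$ acts as $-\id$ on $\tors(\z/n,\z/n)$, so that the action $x\mapsto-\sigma_1(x)$ is trivial and the $\Sigma_2'$-invariants form all of $\z/n$. This matches the exact sequence $0\arr 0\arr\z/n\arr\z/n\arr 0$.

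The main obstacle is the Künneth bookkeeping: one must verify that, under the decomposition $H_3(\Aa_1\oplus\Aa_2,\z)$, the off-diagonal $\tors$-summands $\tors(H_1(\Aa_1),H_1(\Aa_2))\oplus\tors(H_1(\Aa_2),H_1(\Aa_1))$ together with the diagonal summands $\tors(H_1(\Aa_i),H_1(\Aa_i))$ assemble into precisely the $\Sigma_2'$-invariants of $\tors(\Aa_1\oplus\Aa_2,\Aa_1\oplus\Aa_2)$, with the sign convention ``$a\otimes b\mapsto-b\otimes a$'' of $\sigma_1$ entering essentially. Once this sign-tracking is carried out, the short exact sequence assembles tautologically from the Künneth decomposition, so the computation for cyclic $\Aa$ propagates to all finitely generated $\Aa$, and the direct-limit step then handles the general case.
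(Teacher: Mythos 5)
Your argument is correct and is, in substance, exactly the proof the paper outsources: the paper's ``proof'' is only a citation to Suslin's Lemma 5.5 and to Breen, and both of those sources argue precisely as you do --- reduce to finitely generated groups by filtered colimits, split off cyclic factors via K\"unneth and the Pontryagin product, and compute the cyclic case directly (your sign bookkeeping is right: $\sigma_1=-\id$ on $\tors(\z/n,\z/n)$, so the $\Sigma_2'$-action is trivial there and the invariants are all of $\z/n$). The one point you should make explicit is a natural, decomposition-free definition of the surjection $H_3(\Aa,\z)\arr\tors(\Aa,\Aa)^{\Sigma_2'}$ --- e.g.\ the diagonal $\Aa\arr\Aa\times\Aa$ followed by the K\"unneth projection onto $\tors(H_1(\Aa),H_1(\Aa))$ --- since both the colimit step and the matching of K\"unneth summands presuppose that the maps in the sequence are natural transformations defined for all $\Aa$ at once.
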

\begin{proof}
See \cite[Lemma 5.5]{suslin1991} or \cite[Section~\S6]{breen1999}.
\end{proof}
Now let study the differential $d^1_{1,3}: H_3(\PT(A),\z) \arr H_3(\PB(A),\z)$. Let
\[
\PN(A):= \Bigg\{\begin{pmatrix}
b & x\\
0 & b
\end{pmatrix}:b\in \mu_2(A), x\in A\bigg\}/\mu_2(A)I_2\se \PB(A).
\]
Note that $\PN(A)\simeq A$. From the split extension 
$1 \arr \PN(A) \arr \PB(A) \arr \PT(A)\arr 1$, we obtain the decomposition 
\[
H_n(\PB(A), \z)\simeq H_n(\PT(A), \z)\oplus \Aa_n,
\]
where $\Aa_n=H_n(\PB(A),\PT(A); \z)$. By Proposition \ref{H3A}, $H_3(\PT(A),\z)$ sits in the exact sequence
\[
\begin{array}{c}
0 \arr \bigwedge_\z^3 \PT(A)\arr H_3(\PT(A),\z) \arr \tors(\widetilde{\mu}(A),\widetilde{\mu}(A))^{\Sigma_2'}\arr 0.
\end{array}
\]
It is straightforward to show that $d_{1,3}^1\mid_{\bigwedge_\z^ 3 \PT(A)}$ coincides with
multiplication by $2$ and $d_{1,3}^1\mid_{\tors(\widetilde{\mu}(A),\widetilde{\mu}(A))^{\Sigma_2'}}=0$ (see 
\cite[Corollary 1.4]{B-E-2025}). Now it follows from this that 
\[
E_{0,3}^2\simeq \LL\oplus \Aa_3,
\]
where (again by 
By Proposition \ref{H3A} and the Snake lemma) $\LL$ sits in the exact sequence
\[
\begin{array}{c}
(\bigwedge_\z^3 \PT(A))/2\arr \LL \arr \tors(\widetilde{\mu}(A),\widetilde{\mu}(A))^{\Sigma_2'}\arr 0.
\end{array}
\]
Observe that if $A$ is a domain, then $\mu(A)$ is direct limit of its finite cyclic subgroups. This implies that 
\begin{equation}\label{tor}
\tors(\widetilde{\mu}(A),\widetilde{\mu}(A))^{\Sigma_2'}=\tors(\widetilde{\mu}(A),\widetilde{\mu}(A)).
\end{equation}
In fact, if $\mu(A)$ is finite, then we may assume that 
\[
\tors(\widetilde{\mu}(A),\widetilde{\mu}(A))=\widetilde{\mu}(A)\otimes_\z \widetilde{\mu}(A)\simeq \widetilde{\mu}(A)
\]
(see \cite[Corollary 6.3.15]{vermani2003}). In this case (\ref{tor}) follows easily by the action defined before
Proposition \ref{H3A}. If $\mu(A)$ is infinite, then it is straightforward to check that
\begin{align*}
\tors(\widetilde{\mu}(A),\widetilde{\mu}(A)) 
\simeq \underset{\larr}{\lim}\tors(\widetilde{\mu}_{2n}(A),\widetilde{\mu}_{2n}(A))
 \simeq \underset{\larr}{\lim}\Big(\widetilde{\mu}_{2n}(A)\otimes_\z\widetilde{\mu}_{2n}(A)\Big) \simeq \widetilde{\mu}(A),
\end{align*}
which again (\ref{tor}) follows easily.

We usually will work with rings that have the property that $\Aa_3=0$, i.e. $H_3(\PT(A),\z)\simeq H_3(\PB(A),\z)$.
The natural map
\[
H_k(\Tt(A),\z)\arr H_k(\Bb(A),\z)
\]
is studied extensively in \cite[\S3.2]{hutchinson2017} and \cite[\S3]{B-E--2023}. Most of the same techniques work 
for the study of the natural map 
\[
H_k(\PT(A),\z)\arr H_k(\PB(A),\z).
\] 
But using the following lemma we can transfer certain results from the subgroups $\Tt(A)\se \Bb(A)$
of $\SL_2(A)$ to the related subgroups $\PT(A)\se \PB(A)$ of $\PSL_2(A)$.

\begin{lem}\label{T-PT}
If $H_k(\Tt(A),\z)\!\simeq\! H_k(\Bb(A),\z)$ for $k\leq 3$, then 
$H_k(\PT(A),\z)\!\simeq\! H_k(\PB(A),\z)$ for $k\leq 3$.
\end{lem}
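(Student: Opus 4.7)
The plan is to exploit the two compatible central extensions
\[
1\to \mu_2(A) \to \Tt(A) \to \PT(A) \to 1,\qquad 1\to \mu_2(A) \to \Bb(A) \to \PB(A) \to 1,
\]
together with the split structures already used in the paper. Indeed $\PB(A)=\PT(A)\ltimes \PN(A)$ is split, as is the analogous $\Bb(A)=\Tt(A)\ltimes N(A)$ with $N(A):=\{E_{12}(a):a\in A\}\simeq A$. The inclusions $\Tt(A)\harr\Bb(A)$ and $\PT(A)\harr\PB(A)$ are sections of these split extensions, so the induced maps in homology are split injective. Thus the hypothesis is equivalent to $H_k(\Bb(A),\Tt(A);\z)=0$ for $k\leq 3$, while the conclusion amounts to $H_k(\PB(A),\PT(A);\z)=0$ for $k\leq 3$.

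To transfer the vanishing from $\Bb/\Tt$ to $\PB/\PT$, the key tool is the relative Lyndon/Hochschild--Serre spectral sequence for this morphism of central extensions -- equivalently, the Leray--Serre spectral sequence of the fibration of pairs $(B\Bb(A), B\Tt(A))\to (B\PB(A), B\PT(A))$ with fiber $B\mu_2(A)$:
\[
E^2_{p,q}=H_p(\PB(A),\PT(A);H_q(\mu_2(A),\z)) \Longrightarrow H_{p+q}(\Bb(A),\Tt(A);\z).
\]
Since $\mu_2(A)$ sits centrally inside $\Bb(A)$, the action on $H_q(\mu_2(A),\z)$ is trivial, so the spectral sequence is well defined with untwisted coefficients.

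The vanishing of $H_k(\PB(A), \PT(A); \z)$ for $k\leq 3$ then follows by strong induction on $k$. The case $k=0$ is immediate. For the inductive step, assuming the vanishing in all degrees $j<k$, the universal coefficient theorem for relative homology immediately gives $E^2_{p,q}=0$ for every $p<k$ and every $q\geq 0$, regardless of the coefficient group $H_q(\mu_2(A),\z)$. An inspection of the position $(k,0)$ on later pages shows there are no nontrivial incoming differentials (the sources sit at negative $q$) while the outgoing differentials land in columns $p-r<k$ that are killed by the induction. Hence $E^\infty_{k,0}=E^2_{k,0}=H_k(\PB(A),\PT(A);\z)$, and since this is the unique possibly nonzero summand of the vanishing abutment $H_k(\Bb(A),\Tt(A);\z)=0$, it must itself vanish. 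Combining this with the split injectivity from the first paragraph yields the desired isomorphism.

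The main obstacle I expect is justifying the relative Hochschild--Serre spectral sequence cleanly for this pair of central extensions, together with the compatibility of the natural maps on each page; the subsequent induction and universal-coefficient-theorem arguments are routine.
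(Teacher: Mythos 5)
Your argument is correct and takes essentially the same route as the paper, whose proof is precisely to invoke the Lyndon/Hochschild--Serre spectral sequences of the morphism of central extensions $1\to\mu_2(A)\to\Tt(A)\to\PT(A)\to 1$ and $1\to\mu_2(A)\to\Bb(A)\to\PB(A)\to 1$. Your relative (mapping-cone) formulation of that comparison, together with the split injectivity coming from $\Bb(A)=\Tt(A)\ltimes N(A)$ and $\PB(A)=\PT(A)\ltimes\PN(A)$, the universal coefficient theorem and induction on the degree, is a legitimate way of supplying the details the paper leaves implicit.
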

\begin{proof}
This follows from the Lyndon/Hochschild-Serre spectral sequence of the morphism of the
extensions
\[
\begin{tikzcd}
1 \ar[r]&\mu_2(A) \ar[r]\ar[d]&\Tt(A) \ar[r]\ar[d] &\PT(A)\ar[r]\ar[d]& 1\\
1 \ar[r]&\mu_2(A) \ar[r]&\Bb(A) \ar[r] &\PB(A)\ar[r]& 1.
\end{tikzcd}
\]
\end{proof}

\begin{lem}\label{wedge3}
Let $H_3(\PT(A),\z)\simeq H_3(\PB(A),\z)$ and $-1\in \aa^2$. Then the image of $(\bigwedge_\z^3 \PT(A))/2$ 
in $E_{0,3}^2$ sits in the image of $d^2_{2,2}$. In particular, $E_{0,3}^3$ is a quotient of the group
$\tors(\widetilde{\mu}(A),\widetilde{\mu}(A))^{\Sigma_2}$.
\end{lem}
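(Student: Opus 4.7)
The plan is to mimic the strategy of Lemma \ref{surj1}: I would use the explicit cycles $\lambda(a,b)\in H_2(\PSL_2(A),Z_1(A^2))=E^1_{2,2}$ constructed there (whose definition required the hypothesis $-1\in(\aa)^2$) and form from them a three-variable combination whose image under $d^2_{2,2}$ realises the triple wedge. Concretely, for $a,b,c\in\aa$ I would set
\[
\xi(a,b,c):=\lambda(ab,c)-\lambda(a,c)-\lambda(b,c)\in E^1_{2,2}.
\]
By Lemma \ref{surj1}, $d^1_{2,2}(\lambda(x,y))=\hat{x}\wedge\hat{y}$ in $\PT(A)\wedge\PT(A)\simeq H_2(\PT(A),\z)$, hence
\[
d^1_{2,2}(\xi(a,b,c))=\widehat{ab}\wedge\hat{c}-\hat{a}\wedge\hat{c}-\hat{b}\wedge\hat{c}=0
\]
by bilinearity of the wedge product. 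Since $L_3(A^2)=0$, this yields a well-defined class $[\xi(a,b,c)]\in E^2_{2,2}$.

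Next I would compute $d^2_{2,2}([\xi(a,b,c)])$ by the standard zigzag in the double complex $F_\bullet\otimes_{\PSL_2(A)} L_\bullet(A^2)$: lift a representing $2$-chain of $\xi(a,b,c)$ across the inclusion $Z_1(A^2)\harr X_1(A^2)$; because $d^1_{2,2}(\xi(a,b,c))$ vanishes, the horizontal boundary of this lift is the vertical boundary of an explicit $3$-chain in $F_3\otimes_{\PSL_2(A)} X_1(A^2)$; applying $\partial_1$ to that chain yields a cycle in $F_3\otimes_{\PSL_2(A)} X_0(A^2)$ whose class in $H_3(\PSL_2(A),X_0(A^2))\simeq H_3(\PB(A),\z)$ represents $d^2_{2,2}([\xi(a,b,c)])$. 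Under the running hypothesis $H_3(\PT(A),\z)\simeq H_3(\PB(A),\z)$ and Proposition \ref{H3A}, the subgroup $\bigwedge_\z^3\PT(A)\subset H_3(\PT(A),\z)$ survives in $E^2_{0,3}$ as the quotient $(\bigwedge_\z^3\PT(A))/2$, because the differential $d^1_{1,3}$ was identified in the text with multiplication by $2$ on this summand. I expect the bar-level calculation to identify $d^2_{2,2}([\xi(a,b,c)])$ with $\overline{\hat{a}\wedge\hat{b}\wedge\hat{c}}$ in this quotient.

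Once this is in place the conclusion is immediate: by Lemma \ref{wedge} the classes $\overline{\hat{a}\wedge\hat{b}\wedge\hat{c}}$ generate $(\bigwedge_\z^3\PT(A))/2$, so this entire summand lies in $\im(d^2_{2,2})$. The hypothesis $H_3(\PT(A),\z)\simeq H_3(\PB(A),\z)$ forces $\Aa_3=0$, hence $E^2_{0,3}\simeq\LL$, and the short exact sequence
\[
(\bigwedge_\z^3\PT(A))/2\arr\LL\arr\tors(\widetilde{\mu}(A),\widetilde{\mu}(A))^{\Sigma_2'}\arr 0
\]
then shows that $E^3_{0,3}=E^2_{0,3}/\im(d^2_{2,2})$ is a quotient of $\tors(\widetilde{\mu}(A),\widetilde{\mu}(A))^{\Sigma_2'}$, as asserted.

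The main obstacle is the explicit zigzag identification $d^2_{2,2}([\xi(a,b,c)])=\overline{\hat{a}\wedge\hat{b}\wedge\hat{c}}$. The cycle $\xi(a,b,c)$ is a lengthy alternating sum of bar symbols inherited from the formula for $\lambda(-,-)$ in the proof of Lemma \ref{surj1}, and the two successive steps of the zigzag — producing the intermediate $3$-chain in $F_3\otimes_{\PSL_2(A)} X_1(A^2)$ and then identifying the resulting $H_3$-class (after passage through $H_3(\PB(A),\z)\simeq H_3(\PT(A),\z)$) as a triple wedge modulo $2$ — amount to a bar-complex bookkeeping task strictly longer than, but structurally parallel to, the concluding computation of Lemma \ref{surj1}. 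Conceptually, $\xi(a,b,c)$ is precisely the obstruction to bilinearity of $\lambda$ in its first argument, and such a defect is naturally detected by the primitive trilinear piece $\hat{a}\wedge\hat{b}\wedge\hat{c}$ of $H_3(\PT(A),\z)$, modulo the $2$-torsion already killed by $d^1_{1,3}$.
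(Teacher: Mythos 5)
Your proposal follows essentially the same route as the paper: the element you call $\xi(a,b,c)$ is exactly the paper's $\Lambda(a,b,c)=\lambda(ab,c)-\lambda(a,c)-\lambda(b,c)$, it is killed by $d^1_{2,2}$ for the same bilinearity reason, and its image under $d^2_{2,2}$ is computed by the same zigzag through the double complex, with the concluding deduction from the exact sequence for $\LL$ identical to the paper's. The one step you defer --- the explicit bar/standard-resolution computation of the lift $Y_{a,b,c}$ and its pushforward to $H_3(\PT(A),\z)$ --- is precisely what occupies the bulk of the paper's proof, and your predicted answer agrees (up to sign) with the paper's result $d^2_{2,2}(\Lambda(a,b,c))=-{\bf c}(\hat{a},\hat{b},\hat{c})$.
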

\begin{proof}
As in the proof of Lemma \ref{surj1} we denote the matrix  $D(a)$ with $\hat{a}$. Let 
\[
\begin{array}{c}
\overline{\hat{a}\wedge \hat{b} \wedge \hat{c}}\in (\bigwedge_\z^3 \PT(A))/2
\end{array}
\]
and denote its image in $E_{0,3}^2$ by ${\bf c}(\hat{a}, \hat{b}, \hat{c})$.
It is sufficient to show that this element sits in the image of $d^2_{2,2}$. Let $i\in A$ 
such that $i^2=-1$ and consider the diagram
\[
\begin{tikzcd}
\hspace{-1cm}
B_3(\PSL_2(A))\otimes_{\PSL_2(A)} 
X_0(A^2) & B_3(\PSL_2(A))\otimes_{\PSL_2(A)} 
X_1(A^2) \ar["\id_{B_3}\otimes \partial_1"',  l] \ar[d, "d_3\otimes \id_{X_1}"] & \\
& B_2(\SL_2(A))\otimes_{\PSL_2(A)} 
X_0(A^2) & B_2(\PSL_2(A))\otimes_{\PSL_2(A)}  Z_1(A^2).\ar[l, "\id_{B_2}\otimes \inc"']
\end{tikzcd}
\]
Let $\lambda(a,b)=\overline{X_{a,b}}\in H_2(\PSL_2(A),Z_1(A^2))$, where the cycle $X_{a,b}$ is as in the 
proof of Lemma~\ref{surj1}.
Then 
\[
\Lambda(a,b,c):=\lambda(ab,c)-\lambda(a,c)-\lambda(b,c)=\overline{X_{a,b,c}}\in H_2(\PSL_2(A),Z_1(A^2)),
\]
where $X_{a,b,c}\in B_2(\PSL_2(A))\otimes_{\PSL_2(A)} Z_1(A^2)$ is the cycle
\begin{align*}
X_{a,b,c}:=&([\widehat{ab}|\hat{c}]-[\hat{a}|\hat{c}]-[\hat{b}|\hat{c}]+
[\hat{i}|\widehat{abc}]-[\hat{i}|\widehat{ab}]-[\hat{i}|\widehat{bc}]
-[\hat{i}|\widehat{ac}]+[\hat{i}|\hat{a}]+[\hat{i}|\hat{b}]+[\hat{i}|\hat{c}]\\
&+[{w}|\widehat{iabc}]-[{w}|\widehat{iac}]-[{w}|\widehat{ibc}]-[{w}|\widehat{iab}]+
[{w}|\widehat{ia}]+[{w}|\widehat{ib}]+[{w}|\widehat{ic}]-[{w}|\hat{i}])\otimes Y\\
&+[w\widehat{iabc}|w\widehat{iabc}]\otimes \partial_2(X_{abc})-
[w\widehat{iab}|w\widehat{iab}]\otimes \partial_2(X_{ab})
-[w\widehat{ibc}|w\widehat{ibc}]\otimes \partial_2(X_{bc})\\
&-[w\widehat{iac}|w\widehat{iac}]\otimes \partial_2(X_{ac})
+[w\widehat{ia}|w\widehat{ia}]\otimes \partial_2(X_a)
+[w\widehat{ib}|w\widehat{ib}]\otimes \partial_2(X_b)\\
&+[w\widehat{ic}|w\widehat{ic}]\otimes \partial_2(X_c) 
-[w\hat{i}|w\hat{i}]\otimes \partial_2(X_1).
\end{align*}
Let 
\begin{align*}
\theta_z&:=[{g^{-1}_z}|w\widehat{iz}|w\widehat{iz}]-[{h^{-1}_z}|w\widehat{iz}|w\widehat{iz}]
+[\hat{i}\hat{z}^{-1}|{g^{-1}_z}|w\widehat{iz}]-[\widehat{iz}|{h^{-1}_z}|w\widehat{iz}]\\
&+[\widehat{iz}|{\hat{i}\hat{z}^{-1}}|{g^{-1}_z}]-[\hat{i}\hat{z}^{-1}|\widehat{iz}|{h^{-1}_z}]
+[\hat{i}\hat{z}^{-1}|\widehat{iz}|\hat{i}\hat{z}^{-1}],
\end{align*}
where $g_z:=\overline{\mtxx{0}{1}{-1}{z}}$ and $h_z:=\overline{\mtxx{1}{z^{-1}}{0}{1}}$.
Let $Y_{a,b,c}\in B_3(\PSL_2(A))\otimes_{\PSL_2(A)} X_1(A^2)$ be defined as follows
\begin{align*}
Y_{a,b,c}:&=\{[{w}|\widehat{iab}|\hat{c}]-[{w}|\widehat{ia}|\hat{c}]
-[{w}|\widehat{ib}|\hat{c}]+[{w}|\hat{i}|\hat{c}]+[\hat{i}|\widehat{ab}|\hat{c}]
-[\hat{i}|\hat{a}|\hat{c}]-[\hat{i}|\hat{b}|\hat{c}]\}\otimes Y\\
&+(\Phi_{a,b, c}+\Psi_{a,b, c})\otimes ({\pmb \infty},{\pmb 0})),
\end{align*}
where
\begin{align*}
\Phi_{a,b, c}:=&\theta_{abc}-\theta_{ab}-\theta_{bc}-\theta_{ac}+\theta_{a}+\theta_{b}
+\theta_{c}-\theta_{1},\\
\Psi_{a,b, c}:=&[w\widehat{iab}|w\widehat{iab}|\hat{c}]-[w\widehat{ia}|w\widehat{ia}|\hat{c}]
-[w\widehat{ib}|w\widehat{ib}|\hat{c}]+[w\hat{i}|w\hat{i}|\hat{c}]\\
&+[\hat{c}|w\widehat{iabc}|w\widehat{iabc}]-[\hat{c}|w\widehat{iac}|w\widehat{iac}]
-[\hat{c}|w\widehat{ibc}|w\widehat{ibc}]+[\hat{c}|w\widehat{ic}|w\widehat{ic}]\\
&+[\widehat{iab}|w\widehat{iab}|\hat{c}]-[\widehat{ia}|w\widehat{ia}|\hat{c}]
-[\widehat{ib}|w\widehat{ib}|\hat{c}]+[\hat{i}|w\widehat{i}|\hat{c}]\\
&+[\widehat{iab}|\hat{c}|w\widehat{iabc}]-[\widehat{ia}|\hat{c}|w\widehat{iac}]
-[\widehat{ib}|\hat{c}|w\widehat{ibc}]+[\hat{i}|\hat{c}|w\widehat{ic}]\\
&-[\hat{c}|\widehat{iab}|w\widehat{iabc}]+[\hat{c}|\widehat{ia}|w\widehat{iac}]
+[\hat{c}|\widehat{ib}|w\widehat{ibc}]-[\hat{c}|\hat{i}|w\widehat{ic}]\\
&-[\hat{b}|\widehat{ia}|\hat{c}]+[\hat{b}|\hat{c}|\widehat{ia}]
-[\hat{c}|\hat{b}|\widehat{ia}]+[\hat{b}|\hat{i}|\hat{c}]
-[\hat{b}|\hat{c}|\hat{i}]+[\hat{c}|\hat{b}|\hat{i}].
\end{align*}
Then it is straightforward to check that
\[
(d_3\otimes \id_{X_1})(Y_{a,b,c})=(\id_{B_2}\otimes\inc)(X_{a,b,c}).
\]
Now 
\[
d_{2,2}^2(\Lambda(a,b,c))=\overline{(\id_{B_3}\otimes\partial_1)(Y_{a,b,c})},
\]
where
\[
(\id_{B_3}\otimes\partial_1)(Y_{a,b,c})\in B_3(\PSL_2(A))\otimes_{\PSL_2(A)} X_0(A^2).
\]
Since $\partial_1(Y)=0$, we need only to study
\[
(\id_{B_3}\otimes \partial_1)((\Phi_{a,b, c}+\Psi_{a,b, c})\otimes ({\pmb \infty},{\pmb 0})).
\]
We have
\[
(\id_{B_3}\otimes\partial_1)(Y_{a,b,c})=
(w\Phi_{a,b, c}-\Phi_{a,b, c})\otimes ({\pmb \infty})+ (w\Psi_{a,b, c}-\Psi_{a,b, c})\otimes ({\pmb \infty}).
\]
Now consider the composite
\[
B_3(\PSL_2(A))\otimes_{\PSL_2(A)} X_0(A^2) \arr C_3(\PSL_2(A))\otimes_{\PSL_2(A)} X_0(A^2) 
\arr C_3(\PSL_2(A))\otimes_{\PB(A)} \z.
\]
Then
\begin{align*}
(w\theta_z-\theta_z)\otimes ({\pmb \infty})\!\mt&\Big\{({w},{wg^{-1}_z},{wg^{-1}_zw\widehat{iz}},{wg^{-1}_z})
-({w},{wh^{-1}_z},{wh^{-1}_zw\widehat{iz}},{wh^{-1}_z})\\
&+({w},{w\hat{i}\hat{z}^{-1}},{w\hat{i}\hat{z}^{-1}g^{-1}_z},{w\hat{i}\hat{z}^{-1}g^{-1}_zw\widehat{iz}})
-({w},{w\widehat{iz}},{w\widehat{iz}h^{-1}_z},{w\widehat{iz}h^{-1}_zw\widehat{iz}})\\
&+({w},{w\widehat{iz}},{w},{wg^{-1}_z})-({w},{w\hat{i}\hat{z}^{-1}},{w},{wh^{-1}_z})
-({w},{w\hat{i}\hat{z}^{-1}},{w},{w\hat{i}\hat{z}^{-1}})\\
&-(1,{g^{-1}_z},{g^{-1}_zw\widehat{iz}},{g^{-1}_z})+(1,{h^{-1}_z},{h^{-1}_zw\widehat{iz}},{h^{-1}_z})\\
&-(1,{\hat{i}\hat{z}^{-1}},{\hat{i}\hat{z}^{-1}g^{-1}_z},{\hat{i}\hat{z}^{-1}g^{-1}_zw\widehat{iz}})
+(1,\widehat{iz},{\widehat{iz}h^{-1}_z},{\widehat{iz}h^{-1}_zw\widehat{iz}})\\
&-(1,\widehat{iz},1,{g^{-1}_z})+(1,{\hat{i}\hat{z}^{-1}},1,{h^{-1}_z})
+(1,{\hat{i}\hat{z}^{-1}},1,{\hat{i}\hat{z}^{-1}})\Big\}\otimes 1
\end{align*}
and
\begin{align*}
(w\Psi_{a,b, c}\!-\!\Psi_{a,b, c})\otimes\! ({\pmb \infty})\!\mt
&\{({w},\widehat{iab},{w},w\hat{c})\!-\!({w},\widehat{ia},{w},w\hat{c})\!
-\!({w},\widehat{ib},{w},w\hat{c})+({w},\hat{i},{w},w\hat{c})\\
&+\!({w},w\hat{c},\widehat{iab},w\hat{c})\!-\!({w},w\hat{c},\widehat{ia},w\hat{c})\!
-\!({w},w\hat{c},\widehat{ib},w\hat{c})\!+\!({w},w\hat{c},\hat{i},w\hat{c})\\
&+({w},w\widehat{iab},1,\hat{c})-({w},w\widehat{ia},1,\hat{c})
-({w},w\widehat{ib},1,\hat{c})+({w},w\hat{i},1,\hat{c})\\
&+({w},w\widehat{iab},w\widehat{iabc},1)-({w},w\widehat{ia},w\widehat{iac},1)
-({w},w\widehat{ib},w\widehat{ibc},1)\\
&+({w},w\hat{i},w\widehat{ic},1)-({w},w\hat{c},w\widehat{iabc},1)+({w},w\hat{c},w\widehat{iac},1)\\
&+({w},w\hat{c},w\widehat{ibc},1)-({w},w\hat{c},w\widehat{ic},1)
-({w},w\hat{b},w\widehat{iab},w\widehat{iabc})\\
&+({w},w\hat{b},w\widehat{bc},w\widehat{iabc})-({w},w\hat{c},w\widehat{bc},w\widehat{iabc})\\
&+({w},w\hat{b},w\widehat{ib},w\widehat{ibc})-({w},w\hat{b},w\widehat{bc},w\widehat{ibc})
+({w},w\hat{c},w\widehat{bc},w\widehat{ibc})\\
&-(1,w\widehat{iab},1,\hat{c})+(1,w\widehat{ia},1,\hat{c})
+(1,w\widehat{ib},1,\hat{c})-(1,w\hat{i},1,\hat{c})\\
&-(1,\hat{c},w\widehat{iab},\hat{c})+(1,\hat{c},w\widehat{ia},\hat{c})
+(1,\hat{c},w\widehat{ib},\hat{c})-(1,\hat{c},w\hat{i},\hat{c})\\
&-(1,\widehat{iab},{w},w\hat{c})+(1,\widehat{ia},{w},w\hat{c})
+(1,\widehat{ib},{w},w\hat{c})-(1,\hat{i},{w},w\hat{c})\\
&-(1,\widehat{iab},\widehat{iabc},{w})+(1,\widehat{ia},\widehat{iac},{w})
+(1,\widehat{ib},\widehat{ibc},{w})-(1,\hat{i},\widehat{ic},{w})\\
&+(1,\hat{c},\widehat{iabc},{w})-(1,\hat{c},\widehat{iac},{w})
-(1,\hat{c},\widehat{ibc},{w})+(1,\hat{c},\widehat{ic},{w})\\
&+(1,\hat{b},\widehat{iab},\widehat{iabc})-(1,\hat{b},\widehat{bc},\widehat{iabc})
+(1,\hat{c},\widehat{bc},\widehat{iabc})\\
&-(1,\hat{b},\widehat{ib},\widehat{ibc})+(1,\hat{b},\widehat{bc},\widehat{ibc})
-(1,\hat{c},\widehat{bc},\widehat{ibc})\}\otimes 1.
\end{align*}
Now we want to follow these elements through  the maps
\[
C_3(\PSL_2(A))\otimes_{\PB(A)} \z \overset{s_3}{\larr}  C_3(\PB(A))\otimes_{\PB(A)} 
\z \arr C_3(\PT(A))\otimes_{\PT(A)} \z 
\]
\[
\arr B_3(\PT(A))\otimes_{\PT(A)} \z,
\]
where $s_3$ is induced by the section 
\[
s:\PT(A)\backslash\PSL_2(A)\to \PSL_2(A)
\]
discussed in the proof of Lemma \ref{surj1} (see also Lemma \ref{G-H}).
It is straightforward to check that modulo $\im(d_4)$ we have	
\[
(w\theta_z-\theta_z)\otimes({\pmb \infty})\mapsto
(-[\hat{i}\hat{z}^{-1}|\widehat{iz}|\hat{i}\hat{z}^{-1}]+[\hat{i}|\hat{i}|\hat{i}])\otimes 1
=([\widehat{iz}|\hat{i}\hat{z}^{-1}|\widehat{iz}]-[\hat{i}|\hat{i}|\hat{i}])\otimes 1.
\]	
Moreover,
\begin{align*}
(w\Phi_{a,b,c}-\Phi_{a,b,c})\otimes ({\pmb \infty})\mt&
\bigg\{[\widehat{iabc}|(\widehat{iabc})^{-1}|\widehat{iabc}]-[\widehat{iab}|(\widehat{iab})^{-1}|\widehat{iab}]
-[\widehat{ibc}|(\widehat{ibc})^{-1}|\widehat{ibc}]\\
&-[\widehat{iab}|(\widehat{iac})^{-1}|\widehat{iac}]+[\widehat{ia}|(\widehat{ia})^{-1}|\widehat{ia}]
+[\widehat{ib}|(\widehat{ib})^{-1}|\widehat{ib}]\\
&+[\widehat{ic}|(\widehat{ic})^{-1}|\widehat{ic}]-[\hat{i}|\hat{i}|\hat{i}]\bigg\}\otimes 1,
\end{align*}
\begin{align*}
(w\Psi_{a,b,c}-\Psi_{a,b,c})\otimes ({\pmb \infty})\mt&
\bigg\{[\hat{c}^{-1}|\widehat{iabc}|(\widehat{iabc})^{-1}]
-[\hat{c}^{-1}|\widehat{iac}|(\widehat{iac})^{-1}]-[\hat{c}^{-1}|\widehat{ibc}|(\widehat{ibc})^{-1}]\\
&+[\hat{c}^{-1}|\widehat{ic}|(\widehat{ic})^{-1}]+[(\widehat{iab})^{-1}|\hat{c}^{-1}|\widehat{iabc}]
-[(\widehat{ia})^{-1}|\hat{c}^{-1}|\widehat{iac}]\\
&-[(\widehat{ib})^{-1}|\hat{c}^{-1}|\widehat{ibc}]+[\hat{i}|\hat{c}^{-1}|\widehat{ic}]
-[\hat{c}^{-1}|(\widehat{iab})^{-1}|\widehat{iabc}]+[\hat{c}^{-1}|(\widehat{ia})^{-1}|\widehat{iac}]\\
&+[\hat{c}^{-1}|(\widehat{ib})^{-1}|\widehat{ibc}]-[\hat{c}^{-1}|\hat{i}|\widehat{ic}]
-[\hat{b}^{-1}|(\widehat{ia})^{-1}|\hat{c}^{-1}]+[\hat{b}^{-1}|\hat{c}^{-1}|(\widehat{ia})^{-1}]\\
&-[\hat{c}^{-1}|\hat{b}^{-1}|(\widehat{ia})^{-1}]+[\hat{b}^{-1}|\hat{i}|\hat{c}^{-1}]
-[\hat{b}^{-1}|\hat{c}^{-1}|\hat{i}]+[\hat{c}^{-1}|\hat{b}^{-1}|\hat{i}]\\
&-[\hat{c}|(\widehat{iabc})^{-1}|\widehat{iabc}]+[\hat{c}|(\widehat{iac})^{-1}|\widehat{iac}]
+[\hat{c}|(\widehat{ibc})^{-1}|\widehat{ibc}]-[\hat{c}|(\widehat{ic})^{-1}|\widehat{ic}]\\
&-[\widehat{iab}|\hat{c}|(\widehat{iabc})^{-1}]+[\widehat{ia}|\hat{c}|(\widehat{iac})^{-1}]
+[\widehat{ib}|\hat{c}|(\widehat{ibc})^{-1}]-[\hat{i}|\hat{c}|(\widehat{ic})^{-1}]\\
&+[\hat{c}|\widehat{iab}|(\widehat{iabc})^{-1}]-[\hat{c}|\widehat{ia}|(\widehat{iac})^{-1}]
-[\hat{c}|\widehat{ib}|(\widehat{ibc})^{-1}]+[\hat{c}|\hat{i}|(\widehat{ic})^{-1}]\\
&+[\hat{b}|\widehat{ia}|\hat{c}]-[\hat{b}|\hat{c}|\widehat{ia}]+[\hat{c}|\hat{b}|\widehat{ia}]
-[\hat{b}|\hat{i}|\hat{c}]+[\hat{b}|\hat{c}|\hat{i}]-[\hat{c}|\hat{b}|\hat{i}]\bigg\}\otimes 1.
\end{align*}
Combining all these and adding the null element
\begin{align*}
d_4(\bigg\{&-[\hat{c}|\hat{c}^{-1}|\widehat{iabc}|(\widehat{iabc})^{-1}]
+[\hat{c}|\hat{c}^{-1}|\widehat{ia}c|(\widehat{iac})^{-1}]
+[\hat{c}|\hat{c}^{-1}|\widehat{ibc}|(\widehat{ibc})^{-1}]\\
&-[\hat{c}|\hat{c}^{-1}|\widehat{ic}|(\widehat{ic})^{-1}]
+[\hat{c}|\hat{c}^{-1}|(\widehat{iab})^{-1}|\widehat{iabc}]
-[\hat{c}|\hat{c}^{-1}|(\widehat{ia})^{-1}|\widehat{iac}]
-[\hat{c}|\hat{c}^{-1}|(\widehat{ib})^{-1}|\widehat{ibc}]\\
&+[\hat{c}|\hat{c}^{-1}|\hat{i}|\widehat{ic}]
-[\widehat{iabc}|(\widehat{iab})^{-1}|\hat{c}^{-1}|\widehat{iabc}]
+[\widehat{iac}|(\widehat{ia})^{-1}|\hat{c}^{-1}|\widehat{iac}]
+[\widehat{ibc}|(\widehat{ib})^{-1}|\hat{c}^{-1}|\widehat{ibc}]\\
&-[\widehat{ic}|\hat{i}|\hat{c}^{-1}|\widehat{ic}]
+[\hat{c}|\widehat{iab}|(\widehat{iab})^{-1}|\widehat{iab}]
-[\hat{c}|\widehat{ia}|(\widehat{ia})^{-1}|\widehat{ia}]
-[\hat{c}|\widehat{ib}|(\widehat{ib})^{-1}|\widehat{ib}]+[\hat{c}|\hat{i}|\hat{i}|\hat{i}]\\
&+[\widehat{iabc}|\hat{b}^{-1}|(\widehat{ia})^{-1}|\hat{c}^{-1}]
-[\widehat{iabc}|\hat{b}^{-1}|\hat{c}^{-1}|(\widehat{ia})^{-1}]
-[\hat{c}|\widehat{ia}|\hat{b}|(\widehat{iab})^{-1}]-[\widehat{ia}|\hat{b}|\hat{c}|(\widehat{iabc})^{-1}]\\
&+[\widehat{ia}|\hat{c}|\hat{c}^{-1}|(\widehat{ia})^{-1}]
-[\widehat{ia}|\widehat{bc}|\hat{b}^{-1}|\hat{c}^{-1}]
+[\widehat{ia}|\hat{c}|\hat{b}|\hat{b}^{-1}]+[\widehat{iac}|\hat{b}|\hat{b}^{-1}|(\widehat{ia})^{-1}]\\
&-[\widehat{ia}|\widehat{bc}|(\widehat{bc})^{-1}|(\widehat{ia})^{-1}]
-[\hat{b}|\hat{c}|(\widehat{bc})^{-1}|(\widehat{ia})^{-1}]
+[\hat{c}|\hat{c}^{-1}|\hat{b}^{-1}|(\widehat{ia})^{-1}]+[\hat{i}|\hat{b}|\hat{c}|(\widehat{ibc})^{-1}]\\
&-[\hat{i}|\hat{c}|\hat{b}|(\widehat{ibc})^{-1}]+[\widehat{ic}|\hat{b}|(\widehat{ib})^{-1}|\hat{c}^{-1}]
+[\hat{c}|\hat{i}|\hat{b}|(\widehat{ib})^{-1}]+[\hat{b}|\hat{c}|(\widehat{bc})^{-1}|\hat{i}]
-[\hat{c}|\hat{b}|(\widehat{bc})^{-1}|\hat{i}]\\
&-[\hat{c}|\hat{c}^{-1}|\hat{b}^{-1}|\hat{i}]+[\hat{b}|\hat{b}^{-1}|\hat{c}^{-1}|\hat{i}]
-[\hat{b}|\hat{b}^{-1}|\hat{i}|\hat{c}^{-1}]
-[\hat{c}|\hat{b}|\hat{b}^{-1}|\hat{c}^{-1}]\bigg\}\otimes 1)
\end{align*}
we see that 
$d_{2,2}^2(\Lambda(a,b,c))=\overline{Z_{a,b,c}}$, where $Z_{a,b,c}$ is the cycle
\begin{align*}
Z_{a,b,c}= &-([\widehat{ia}|\hat{b}|\hat{c}]+[\hat{c}|\widehat{ia}|\hat{b}]+[\hat{b}|\hat{c}|\widehat{ia}]
-[\hat{b}|\widehat{ia}|\hat{c}]-[\hat{c}|\hat{b}|\widehat{ia}]-[\widehat{ia}|\hat{c}|\hat{b}]\\
&-[\hat{i}|\hat{b}|\hat{c}]-[\hat{c}|\hat{i}|\hat{b}]-[\hat{b}|\hat{c}|\hat{i}]+[\hat{b}|\hat{i}|\hat{c}]
+[\hat{c}|\hat{b}|\hat{i}]+[\hat{i}|\hat{c}|\hat{b}])\otimes 1.
\end{align*}
Therefore, 
\begin{align*}
d_{2,2}^2(\Lambda(a,b,c)) =\overline{Z_{a,b,c}}=-\Big({\bf c}(\widehat{ia}, \hat{b}, \hat{c})-{\bf c}(\hat{i}, \hat{b}, \hat{c})\Big)
=-{\bf c}(\hat{a}, \hat{b}, \hat{c}).
\end{align*}
This completes the proof of the lemma.
\end{proof}

\begin{cor}
Let $H_3(\PB(A),\PT(A);\z)=0$ and $-1\in \aa^2$. Then the composite
\[
\begin{array}{c}
\bigwedge_\z^3 \PT(A) \arr H_3(\PT(A), \z)\arr H_3(\PSL_2(A), \z)
\end{array}
\]
is trivial.
\end{cor}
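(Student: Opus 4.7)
The plan is to factor the composite through the edge map of the spectral sequence of Section~\ref{sec2} and then invoke Lemma~\ref{wedge3}. The inclusion $\PT(A)\se \PSL_2(A)$ factors through $\PB(A)$, so the composite can be rewritten as
\[
\bigwedge{}_\z^3 \PT(A)\arr H_3(\PT(A),\z)\arr H_3(\PB(A),\z)\arr H_3(\PSL_2(A),\z).
\]
Under the hypothesis $H_3(\PB(A),\PT(A);\z)=0$, i.e.\ $\Aa_3=0$, the middle map is an isomorphism, by the split extension decomposition $H_3(\PB(A),\z)\simeq H_3(\PT(A),\z)\oplus \Aa_3$ noted just before Proposition~\ref{H3A}.

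By Shapiro's lemma $E^1_{0,3}=H_3(\PSL_2(A),X_0(A^2))\simeq H_3(\PB(A),\z)$, and under this identification the last arrow above coincides with the edge homomorphism $E^1_{0,3}\two E^\infty_{0,3}\harr H_3(\PSL_2(A),\z)$ of the spectral sequence of Section~\ref{sec2}. It therefore suffices to show that the image of $\bigwedge_\z^3 \PT(A)$ in $E^\infty_{0,3}$ is trivial.

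To this end I track the image through the pages. Since $d^1_{0,3}$ has target $E^1_{-1,4}=0$, we have $E^2_{0,3}=E^1_{0,3}/\im(d^1_{1,3})$. Because $d^1_{1,3}$ restricted to $\bigwedge_\z^3 \PT(A)$ is multiplication by $2$ (as recorded in the discussion preceding Lemma~\ref{wedge3}), the image of $\bigwedge_\z^3 \PT(A)$ in $E^2_{0,3}$ equals the image of $(\bigwedge_\z^3 \PT(A))/2$. By Lemma~\ref{wedge3} this image lies in $\im(d^2_{2,2})$, hence it vanishes already in $E^3_{0,3}=E^2_{0,3}/\im(d^2_{2,2})$. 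For $r\geq 2$ the differentials $d^r_{0,3}$ have zero target, so each subsequent transition $E^r_{0,3}\two E^{r+1}_{0,3}$ is merely a further quotient; consequently the image remains zero at $E^\infty_{0,3}$.

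The substantive work is already contained in Lemma~\ref{wedge3}; the only point requiring care here is to verify that the inclusion-induced map $H_3(\PB(A),\z)\arr H_3(\PSL_2(A),\z)$ really agrees with the edge map of the spectral sequence, which is standard from Shapiro's lemma together with the quasi-isomorphism $L_\bullet(A^2)\arr \z$ that holds under the standing $\GE_2$ hypotheses of Section~\ref{sec2}.
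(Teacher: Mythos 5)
Your proof is correct and is exactly the argument the paper intends: the paper's proof of this corollary is simply ``This follows immediately from the previous lemma,'' and your write-up supplies the standard bookkeeping behind that claim (identifying the inclusion-induced map with the edge homomorphism at $E^1_{0,3}$, passing to $(\bigwedge_\z^3\PT(A))/2$ via $d^1_{1,3}$, and killing it with $d^2_{2,2}$ using Lemma~\ref{wedge3}). No discrepancy in approach; your remark that the identification with the edge map relies on the $\GE_2$ hypothesis of Section~\ref{sec2} is a fair observation about an assumption the corollary inherits implicitly.
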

\begin{proof}
This follows immediately from the previous lemma.
\end{proof}

\section{A projective refined Bloch-Wigner exact sequence}\label{sec3}

Here is the main result of this paper.

\begin{thm}\label{Proj-BW}
Let $A$ be a $\GE_2$-ring such that $H_3(\PT(A),\z)\simeq H_3(\PB(A),\z)$.
If moreover $-1\in \aa^2$, then we have the projective refined Bloch-Wigner exact sequence
\[
\tors(\widetilde{\mu}(A),\widetilde{\mu}(A))^{\Sigma_2'} \arr H_3(\PSL_2(A),\z) \arr \RB'(A) \arr 0.
\]
If $A$ is a domain, then we have the exact sequence
\[
0\arr \tors(\widetilde{\mu}(A),\widetilde{\mu}(A)) \arr H_3(\PSL_2(A),\z) \arr \RB'(A) \arr 0.
\]
\end{thm}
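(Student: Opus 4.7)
The proof proceeds via the spectral sequence
\[
E^1_{p,q}=H_q(\PSL_2(A),L_p(A^2))\Longrightarrow H_{p+q}(\PSL_2(A),\z)
\]
introduced in Section \ref{sec2}, whose convergence to group homology is guaranteed by the $\GE_2$-hypothesis on $A$. Since $L_p(A^2)=0$ for $p\ge 3$, the only possibly nonzero terms on the $p+q=3$ line of $E^\infty$ are $E_{0,3}^\infty$, $E_{1,2}^\infty$ and $E_{2,1}^\infty$; and since $E^r_{r,4-r}=0$ for $r\ge 3$ (by the length of $L_\bullet(A^2)$), we also have $E_{0,3}^\infty=E_{0,3}^3$.

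First I would assemble these three terms using the material already developed in Section~\ref{sec2}. Lemma \ref{coincide1} gives $E_{2,1}^\infty=\RB'(A)$; Lemma \ref{surj1}, which needs the hypothesis $-1\in\aa^2$, forces $E_{1,2}^2=0$, and hence $E_{1,2}^\infty=0$. For $E_{0,3}$, combining Proposition \ref{H3A} with the description of $d^1_{1,3}$ recalled just before Lemma \ref{wedge3} (which uses $H_3(\PT(A),\z)\simeq H_3(\PB(A),\z)$) identifies $E_{0,3}^2$ as an extension
\[
\begin{array}{c}
(\bigwedge_\z^3 \PT(A))/2\arr E_{0,3}^2 \arr \tors(\widetilde{\mu}(A),\widetilde{\mu}(A))^{\Sigma_2'}\arr 0,
\end{array}
\]
and Lemma \ref{wedge3} (using $-1\in\aa^2$ again) then ensures that the image of the left-hand term lies in $\im d^2_{2,2}$, so that $E_{0,3}^\infty=E_{0,3}^3$ is a quotient of $\tors(\widetilde{\mu}(A),\widetilde{\mu}(A))^{\Sigma_2'}$. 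The filtration $0\se F_0\se F_1\se F_2=H_3(\PSL_2(A),\z)$ with $F_p/F_{p-1}\simeq E^\infty_{p,3-p}$ immediately produces the short exact sequence
\[
0\arr E_{0,3}^\infty\arr H_3(\PSL_2(A),\z)\arr \RB'(A)\arr 0,
\]
and composing with the surjection $\tors(\widetilde{\mu}(A),\widetilde{\mu}(A))^{\Sigma_2'}\two E_{0,3}^\infty$ yields the first exact sequence claimed.

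For the stronger statement when $A$ is a domain, I would first invoke equation (\ref{tor}) to replace $\tors(\widetilde{\mu}(A),\widetilde{\mu}(A))^{\Sigma_2'}$ by $\tors(\widetilde{\mu}(A),\widetilde{\mu}(A))$; only injectivity of the resulting map into $H_3(\PSL_2(A),\z)$ remains. Equivalently, one must upgrade the containment $\im d^2_{2,2}\supseteq\im((\bigwedge_\z^3\PT(A))/2\arr E_{0,3}^2)$ furnished by Lemma \ref{wedge3} to an equality. The cleanest route is to construct enough elements of $H_3(\PSL_2(A),\z)$ to detect every finite-order root of unity: for each cyclic subgroup $\lan \bar\zeta\ran\se\widetilde{\mu}(A)$ the composite
\[
H_3(\lan \bar\zeta\ran,\z)\arr H_3(\PT(A),\z)\arr H_3(\PSL_2(A),\z)
\]
should hit the corresponding generator of $\tors(\widetilde{\mu}(A),\widetilde{\mu}(A))$, and since $A$ is a domain $\mu(A)$ is a filtered colimit of such finite cyclic groups, so these detection elements assemble compatibly. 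The main obstacle will be precisely this last step. A plausible route is to compare with the analogous spectral sequence for $\SL_2(A)$ studied in \cite{B-E-2023}, \cite{B-E--2023} via the Lyndon/Hochschild-Serre spectral sequence of the central extension $1\arr\mu_2(A)\arr\SL_2(A)\arr\PSL_2(A)\arr 1$, and to transport Hutchinson's $\SL_2$-version of the refined Bloch-Wigner sequence across this comparison.
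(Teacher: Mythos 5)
Your treatment of the first exact sequence is correct and follows the paper's own route essentially verbatim: the same identification of $E^\infty_{2,1}$ via Lemma \ref{coincide1}, the vanishing of $E^2_{1,2}$ via Lemma \ref{surj1}, and the killing of the image of $(\bigwedge_\z^3\PT(A))/2$ in $E^2_{0,3}$ via Lemma \ref{wedge3}. No issues there.

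The gap is in the second statement, the injectivity of $\tors(\widetilde{\mu}(A),\widetilde{\mu}(A))\arr H_3(\PSL_2(A),\z)$ when $A$ is a domain. You correctly reduce to upgrading ``$E^3_{0,3}$ is a quotient of $\tors(\widetilde{\mu}(A),\widetilde{\mu}(A))$'' to an isomorphism, but your two proposed routes do not close this. The detection strategy via the composites $H_3(\lan\bar\zeta\ran,\z)\arr H_3(\PT(A),\z)\arr H_3(\PSL_2(A),\z)$ is exactly a restatement of what must be proved --- that these classes survive --- and you offer no mechanism for showing they are nonzero in the abutment. The fallback of transporting Hutchinson's $\SL_2$-version through the central extension $1\arr\mu_2(A)\arr\SL_2(A)\arr\PSL_2(A)\arr 1$ fails under the hypotheses of the theorem: that result is proved for infinite fields (where it is only exact at the middle term up to $4$-torsion) and for local rings with sufficiently large residue field, neither of which is assumed here (the theorem allows any $\GE_2$-domain with $H_3(\PT(A),\z)\simeq H_3(\PB(A),\z)$ and $-1\in\aa^2$); and a statement exact only modulo $4$-torsion cannot yield the injectivity of a map whose source is largely $2$-power torsion after passing to $\widetilde{\mu}$.

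The paper's actual argument is different and you should adopt it: let $F$ be the fraction field of $A$ and $\overline{F}$ its algebraic closure. By naturality of the entire construction in $A$, the sequence for $A$ maps to the classical Bloch--Wigner exact sequence
\[
0\arr\tors(\widetilde{\mu}(\overline{F}),\widetilde{\mu}(\overline{F}))\arr H_3(\PSL_2(\overline{F}),\z)\arr\BB(\overline{F})\arr 0,
\]
which is known to be exact (Dupont--Sah, Suslin). Since $\tors(\widetilde{\mu}(A),\widetilde{\mu}(A))\arr\tors(\widetilde{\mu}(\overline{F}),\widetilde{\mu}(\overline{F}))$ is injective, a diagram chase in the resulting commutative ladder forces the left-hand map for $A$ to be injective as well. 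This is where the domain hypothesis is genuinely used --- it provides the embedding into $\overline{F}$ --- not merely in the identification $\tors(\widetilde{\mu}(A),\widetilde{\mu}(A))^{\Sigma_2'}=\tors(\widetilde{\mu}(A),\widetilde{\mu}(A))$ of equation (\ref{tor}).
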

\begin{proof}
We have seen that $E_{2,1}^2\simeq \RP_1'(A)$ and $E_{1,1}^2\simeq \widetilde{\mu}_4(A)$ (see Lemma \ref{d21}). 
On the other hand, $E_{0,2}^2\simeq H_2(\PB(A),\z)$ (see the paragraph above Lemma~\ref{d21}) and by 
Lemma~\ref{surj1}, $E_{1,2}^2=0$. Since $H_3(\PT(A),\z)\simeq H_3(\PB(A),\z)$, we have $E_{0,3}^2\simeq \LL$
(see the paragraph following Proposition \ref{H3A}). Also note that $E_{0,1}^2\simeq \widetilde{\GG}_A\oplus A_{\aa}$
(see the paragraph above Lemma \ref{GG}) and $E_{02}^2\simeq I'(A)$ (see Remark \ref{IA}).
Finally by Lemma \ref{coincide1}, the differential 
\[
d_{2,1}^2:\RP_1'(A) \arr H_2(\PB(A),\z)
\]
coincides with $\lambda_1'$.  Therefore the second page $E_{p,q}^2$ of the spectral sequence has the following form:
\[
\begin{tikzcd}
\LL & \ast & \ast & 0  \\
H_2(\PB(A),\z) & 0 &  \ar[llu,"d^2_{2,2}"']E_{2,2}^2 & 0  \\
\widetilde{\GG}_A\oplus A_{\aa} & \widetilde{\mu}_4(A) & \ar[llu,"d^2_{2,1}=\lambda_1'"'] \RP_1'(A) & 0  \\
\z & 0 & \ar[llu,"d^2_{2,0}"'] I'(A) & 0 
\end{tikzcd}
\]
Thus by Lemma \ref{coincide1}, $E_{2,1}^\infty\simeq E_{2,1}^3\simeq \RB'(A)$ and
by Lemma \ref{wedge3}, $E_{0,3}^3$ is a quotient of $\tors(\tilde{\mu}(A),\tilde{\mu}(A))^{\Sigma_2'}$.
Now by an easy analysis of the spectral sequence we obtain the exact sequence
\[
\tors(\widetilde{\mu}(A),\widetilde{\mu}(A))^{\Sigma_2'} \arr H_3(\PSL_2(A),\z) \arr 
\RB'(A) \arr 0.
\]
Now let $A$ be a domain. Then 
$\tors(\widetilde{\mu}(A),\widetilde{\mu}(A))^{\Sigma_2'} =\tors(\widetilde{\mu}(A),
\widetilde{\mu}(A))$ (see the paragraph following Proposition \ref{H3A}).
Let $F$ be the quotient field of $A$ and $\overline{F}$ the algebraic 
closure of $F$. It is very easy to see that 
\[
\RB'(\overline{F})=\RB(\overline{F})\simeq\BB(\overline{F}),
\]
where $\BB(\overline{F})$ is the classical Bloch group of $\overline{F}$ \cite[\S1]{suslin1991},
\cite[\S2.2]{hutchinson2017}. 
The classical Bloch-Wigner exact sequence claims that the sequence
\[
0\arr\tors(\widetilde{\mu}(\overline{F}),\widetilde{\mu}(\overline{F}))\arr H_3(\PSL_2(\overline{F}),\z) 
\arr \BB(\overline{F})\arr 0
\]
is exact (see \cite[Theorem, App. A]{dupont-sah1982} or \cite[Theorem 5.2]{suslin1991}). Now the final 
claim follows from the commutative diagram with exact rows
\[
\begin{tikzcd}
&\tors(\widetilde{\mu}(A),\widetilde{\mu}(A))\ar[r]\ar[d, hook]&H_3(\PSL_2(A),\z) \ar[r]\ar[d]
&\RB'(A)\ar[r]\ar[d]&0\\
0 \ar[r] & \tors (\widetilde{\mu}(\overline{F}), \widetilde{\mu}(\overline{F})) \ar[r] & 
H_3(\PSL_2(\overline{F}),\z) \ar[r] & \BB(\overline{F}) \ar[r] & 0
\end{tikzcd}
\]
and the fact that the left vertical map is injective.
\end{proof}

\begin{thm}\label{G<4}
Let $A$ be a $\GE_2$-domain such that  
$H_3(\PT(A),\z)\simeq H_3(\PB(A),\z)$. If $|\GG_A|\leq 4$, then we have the exact sequence
\[
0 \arr \tors(\widetilde{\mu}(A),\widetilde{\mu}(A)) \arr H_3(\PSL_2(A), \z)\arr \RB'(A) \arr 0.
\]
\end{thm}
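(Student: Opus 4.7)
The plan is to adapt the spectral-sequence argument of Theorem \ref{Proj-BW}, replacing the two steps that used $-1\in \aa^2$ --- Lemma \ref{surj1} (for the vanishing of $E_{1,2}^2$) and Lemma \ref{wedge3} (for the control of $E_{0,3}^3$) --- by arguments that exploit the bound $|\GG_A|\le 4$. Since Theorem \ref{Proj-BW} already yields the stated exact sequence whenever $-1\in \aa^2$, I may assume throughout that $-1\notin \aa^2$; then $\lan -1\ran$ is a nontrivial class in $\GG_A$ and $|\widetilde{\GG}_A|=|\GG_A|/2\le 2$.

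For $E_{1,2}^2$: by Lemma \ref{GG}, this group is a quotient of $(\GG_A\wedge \GG_A)/(\mu_2(A)\wedge \GG_A)$. If $|\GG_A|\le 2$ the numerator is already zero. If $|\GG_A|=4$, write $\GG_A=\{1,\lan-1\ran,\lan b\ran,\lan-b\ran\}$; then $\GG_A\wedge \GG_A\simeq \z/2$ is generated by $\lan-1\ran\wedge \lan b\ran$, which already lies in $\mu_2(A)\wedge \GG_A$, so the quotient vanishes. Hence $E_{1,2}^2=0$ in either subcase.

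For $E_{0,3}^3$: since $H_3(\PT(A),\z)\simeq H_3(\PB(A),\z)$, we have $\Aa_3=0$ and $E_{0,3}^2\simeq \LL$, where (by the paragraph preceding Theorem \ref{Proj-BW}) $\LL$ fits into
\[
(\bigwedge_\z^3 \PT(A))/2 \arr \LL \arr \tors(\widetilde{\mu}(A),\widetilde{\mu}(A))^{\Sigma_2'}\arr 0.
\]
By Lemma \ref{wedge}, $(\bigwedge_\z^3 \PT(A))/2\simeq \bigwedge_\z^3 \widetilde{\GG}_A$, and this vanishes because $|\widetilde{\GG}_A|\le 2$. Therefore $\LL\simeq \tors(\widetilde{\mu}(A),\widetilde{\mu}(A))^{\Sigma_2'}$, which equals $\tors(\widetilde{\mu}(A),\widetilde{\mu}(A))$ by (\ref{tor}) since $A$ is a domain. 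In particular $E_{0,3}^2$ is already a quotient of $\tors(\widetilde\mu(A),\widetilde\mu(A))$, so no analogue of Lemma \ref{wedge3} is needed.

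With $E_{1,2}^2=0$, the above description of $E_{0,3}^2$, and Lemma \ref{coincide1} identifying $d_{2,1}^2$ with $\lambda_1'$, the $E^2$-page has exactly the shape drawn in the proof of Theorem \ref{Proj-BW}; the $\GE_2$-hypothesis identifies the abutment with $H_\bullet(\PSL_2(A),\z)$, so the same convergence analysis yields the right-exact sequence
\[
\tors(\widetilde\mu(A),\widetilde\mu(A)) \arr H_3(\PSL_2(A),\z) \arr \RB'(A) \arr 0.
\]
Left-exactness is obtained exactly as at the end of the proof of Theorem \ref{Proj-BW}, via comparison with the classical Bloch--Wigner sequence over the algebraic closure $\overline F$ of the quotient field of $A$ together with the injectivity of $\tors(\widetilde\mu(A),\widetilde\mu(A))\hookrightarrow \tors(\widetilde\mu(\overline F),\widetilde\mu(\overline F))$. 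The main obstacle to anticipate is the substitute for Lemma \ref{wedge3}; fortunately, once one identifies $\PT(A)/2$ with $\widetilde{\GG}_A$ via Lemma \ref{wedge} and notes that $|\widetilde{\GG}_A|\le 2$ under our hypotheses, $\bigwedge_\z^3 \widetilde{\GG}_A$ vanishes for free, making this step essentially automatic rather than requiring the elaborate cycle computation of Lemma \ref{wedge3}.
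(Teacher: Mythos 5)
Your proposal is correct and follows essentially the same route as the paper: reduce to the case $-1\notin\aa^2$ via Theorem \ref{Proj-BW}, kill $E_{1,2}^2$ by showing $\GG_A\wedge\GG_A=\mu_2(A)\wedge\GG_A$ when $|\GG_A|\le 4$, observe that the rank of $\GG_A$ forces $(\bigwedge_\z^3\PT(A))/2=0$ so that $E_{0,3}^2\simeq\tors(\widetilde{\mu}(A),\widetilde{\mu}(A))$ without any analogue of Lemma \ref{wedge3}, and finish by the same spectral-sequence analysis and comparison with $\overline{F}$ for left-exactness. The only cosmetic difference is that the paper notes $\bigwedge_\z^3\GG_A=0$ where you use $\bigwedge_\z^3\widetilde{\GG}_A=0$; both suffice.
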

\begin{proof}
If $-1\in \aa^2$, then the claim follows from the previous theorem. 
So let $-1\notin \aa^2$. Then  $\GG_A=\{1, -1, a, -a\}$ for some $a\in \aa$. 
It follows from this that
\[
\GG_A \wedge \GG_A= \GG_A \wedge \mu_2(A)
\]
and hence by Lemma~\ref{GG}, $E_{1,2}^2=0$. Moreover, since $\GG_A$ is isomorphic with a subgroup of 
$\z/2 \times \z/2$, we have $\bigwedge_\z^3 \GG_A=0$. Therefore 
\[
E_{0,3}^2\simeq \tors(\widetilde{\mu}(A),\widetilde{\mu}(A))
\]
(see Lemma \ref{wedge} and the paragraph following Proposition \ref{H3A}). 
Now the claim follows from an easy analysis of the main spectral sequence. 
\end{proof}

\begin{cor}\label{PRBW-cor}
Let $A$ be a local domain such that its residue field either is infinite or if it has 
$p^d$ elements, then $(p-1)d>6$. If moreover either $-1\in \aa^2$ or $|\GG_A|\leq 4$, then we 
have the exact sequence
\[
0\arr \tors(\widetilde{\mu}(A),\widetilde{\mu}(A)) \arr H_3(\PSL_2(A),\z) \arr \RB(A) \arr 0.
\]
\end{cor}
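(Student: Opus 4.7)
The plan is to reduce the corollary to either Theorem~\ref{Proj-BW} or Theorem~\ref{G<4} by checking the remaining hypotheses for the given class of local domains. Since $A$ is local, it is a $\GE_2$-ring by Example~\ref{exact-11}(1), and the complex $X_\bullet(A^2)$ is exact in dimension~$1$. By Remark~\ref{RPRP'}, this forces $\RP_1(A)=\RP_1'(A)$, and consequently $\RB(A)=\RB'(A)$. Thus the only nontrivial input needed from Theorems \ref{Proj-BW} and \ref{G<4} is the isomorphism $H_3(\PT(A),\z)\simeq H_3(\PB(A),\z)$.

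To obtain this isomorphism, I would apply Lemma~\ref{T-PT}, which reduces the problem to showing $H_k(\Tt(A),\z)\simeq H_k(\Bb(A),\z)$ for $k\le 3$. Since the extension $1\arr \PN(A)\to \Bb(A)\to \Tt(A)\arr 1$ is split (conjugation by diagonal matrices acts on the unipotent radical through the square of the diagonal character), the comparison map is split injective and we must show that the relative homology $H_k(\Bb(A),\Tt(A);\z)$ vanishes for $k\le 3$. The Lyndon/Hochschild--Serre spectral sequence of this split extension expresses these relative groups in terms of coinvariants of homology of $\PN(A)\simeq A$ under the $\Tt(A)$-action by squared characters; the vanishing is precisely the content of the detailed analysis carried out in \cite[\S3.2]{hutchinson2017} and \cite[\S3]{B-E--2023}, and the numerical hypothesis ``residue field infinite, or has $p^d$ elements with $(p-1)d>6$'' is tailored so that the relevant torsion in $\GG_A$ and $A$ is killed.

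With both $H_3(\PT(A),\z)\simeq H_3(\PB(A),\z)$ and the equality $\RB(A)=\RB'(A)$ established, the corollary splits into the two stated cases. If $-1\in (\aa)^2$, Theorem~\ref{Proj-BW} applied to the domain $A$ yields the desired exact sequence
\[
0\arr \tors(\widetilde{\mu}(A),\widetilde{\mu}(A))\arr H_3(\PSL_2(A),\z) \arr \RB(A)\arr 0.
\]
If instead $|\GG_A|\le 4$, Theorem~\ref{G<4} applies directly and gives the same exact sequence, since the hypotheses of that theorem ($A$ a $\GE_2$-domain with $H_3(\PT(A),\z)\simeq H_3(\PB(A),\z)$) have already been verified.

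The main technical obstacle is the verification of $H_k(\Tt(A),\z)\simeq H_k(\Bb(A),\z)$ for $k\le 3$ under the quantitative residue-field condition $(p-1)d>6$; this is where the numerical hypothesis is really consumed, and is the step that cannot be bypassed by a purely formal argument. Everything else is either immediate from local-ring theory (the $\GE_2$ and exactness properties) or a direct invocation of the preceding theorems.
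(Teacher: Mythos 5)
Your proposal is correct and follows essentially the same route as the paper: exactness of $X_\bullet(A^2)$ in dimension $<2$ for local rings gives $\RB(A)=\RB'(A)$ via Remark~\ref{RPRP'}, the isomorphism $H_3(\PT(A),\z)\simeq H_3(\PB(A),\z)$ is obtained from the corresponding statement for $\Tt(A)\subseteq\Bb(A)$ (which is \cite[Proposition 3.19]{hutchinson2017}, where the hypothesis $(p-1)d>6$ is consumed) together with Lemma~\ref{T-PT}, and the conclusion then follows from Theorem~\ref{Proj-BW} or Theorem~\ref{G<4} according to the case.
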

\begin{proof}
By Example \ref{exact-11}, $X_\bullet(A^2)\arr \z$ is exact in dimension $<2$. Thus
\[
\RB(A)=\RB'(A)
\]
(see Remark \ref{RPRP'}). Moreover, by \cite[Proposition 3.19]{hutchinson2017} and  Lemma \ref{T-PT}, 
\[
H_3(\PT(A),\z)\simeq H_3(\PB(A),\z).
\]
Now the claim follows from the previous two theorems.
\end{proof}

\begin{cor}\label{DVR}
Let $A$ be a discrete valuation ring with quotient field $F$ and residue field $k$. If 
$|k|=p^d$, we assume that $(p-1)d>6$. If $|\GG_F|\leq 8$, then we have the exact sequence
\[
0\arr \tors(\widetilde{\mu}(A),\widetilde{\mu}(A)) \arr H_3(\PSL_2(A),\z)\arr \RB(A) \arr 0.
\]
\end{cor}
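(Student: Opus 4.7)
The plan is to reduce this corollary directly to Corollary \ref{PRBW-cor}. The DVR hypotheses already match the ``local domain with suitable residue field'' condition there, so the only thing to verify is the alternative condition ``$-1\in (A^\times)^2$ or $|\GG_A|\leq 4$''. I will aim for the second clause, by controlling $|\GG_A|$ in terms of $|\GG_F|$.

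First, I would record the standard splitting of the unit group of a DVR. Fixing a uniformizer $\pi$, the valuation gives a split short exact sequence
\[
1\arr A^\times \arr F^\times \arr \z \arr 1,
\]
so $F^\times \simeq A^\times \times \langle \pi\rangle$ as abelian groups. Squaring this decomposition yields
\[
\GG_F \simeq \GG_A \times \langle \pi\rangle/\langle \pi^2\rangle \simeq \GG_A \times \z/2.
\]
In particular $|\GG_F| = 2|\GG_A|$, and this identity is independent of the characteristic of the residue field. From the hypothesis $|\GG_F|\leq 8$ we immediately deduce $|\GG_A|\leq 4$.

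With this in hand, the three hypotheses of Corollary \ref{PRBW-cor} are all met: $A$ is a local domain (since a DVR is local), the residue-field condition on $k$ is identical in both statements, and we have just shown $|\GG_A|\leq 4$. Applying Corollary \ref{PRBW-cor} then produces the exact sequence
\[
0\arr \tors(\widetilde{\mu}(A),\widetilde{\mu}(A)) \arr H_3(\PSL_2(A),\z)\arr \RB(A) \arr 0,
\]
which is the desired conclusion.

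There is no real obstacle here: the content of the corollary lies entirely in Corollary \ref{PRBW-cor}, and the only step to justify is the elementary group-theoretic computation $\GG_F\simeq \GG_A\times \z/2$ for a DVR. The mild point to keep in mind is that this splitting truly works in all characteristics (including residue characteristic $2$), because it depends only on the valuation splitting the unit group as $A^\times \times \langle\pi\rangle$, not on any $2$-adic subtleties of $A^\times$ itself.
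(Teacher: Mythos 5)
Your proposal is correct and follows essentially the same route as the paper: the paper likewise uses the valuation to get the exact sequence $1 \arr \GG_A \arr \GG_F \arr \z/2 \arr 0$, concludes $|\GG_A|\leq 4$, and invokes Corollary \ref{PRBW-cor}. Your version merely makes the splitting $F^\times\simeq A^\times\times\langle\pi\rangle$ explicit, which is a harmless elaboration of the same argument.
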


\begin{proof}
From the valuation of $F$ we obtain the exact sequence 
\[
1 \arr \GG_{A} \arr \GG_{F} \arr \z/2 \arr 0.
\]
It follows from this that $|\GG_A|\leq 4$ and hence the claim follows from the above corollary.
\end{proof}

\begin{cor}\label{fields}
Let $F$  be either a  quadratically closed field, a real closed field or a local field which is 
not a finite extension of $\q_2$. Then we have the exact sequence
\[
0 \arr \tors(\widetilde{\mu}(F),\widetilde{\mu}(F)) \arr H_3(\PSL_2(F), \z)\arr \RB(F) \arr 0.
\]
\end{cor}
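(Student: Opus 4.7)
The plan is to verify, case by case, that each field $F$ in the hypothesis satisfies the hypotheses of Corollary~\ref{PRBW-cor}, and then apply that corollary directly. Viewing $F$ as a local ring, its maximal ideal is $(0)$ and its residue field is $F$ itself. In every case listed, $F$ is infinite (local fields are infinite; real closed and quadratically closed fields, in the standard sense, contain an infinite subfield), so the residue-field hypothesis of Corollary~\ref{PRBW-cor} is automatic. What remains is to verify the condition ``$-1\in(F^\times)^2$ or $|\GG_F|\le 4$'' in each of the three cases.

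If $F$ is quadratically closed, every element of $F^\times$ is a square, so trivially $-1\in(F^\times)^2$. If $F$ is real closed, the classical theory of orderings gives $\GG_F=\{1,-1\}$, hence $|\GG_F|=2\le 4$. For the third case I would split $F$ further using the classification of local fields: $F=\C$ is quadratically closed; $F=\R$ is real closed; a finite extension of $\q_p$ with $p$ odd has $F^\times\simeq\z\times\mu_{q-1}\times(1+\mathfrak{m})$, with $1+\mathfrak{m}$ a pro-$p$ group, which yields $|\GG_F|=2\cdot 2\cdot 1=4$; a local field of positive characteristic with odd residue characteristic is of the form $\F_q((t))$ with $q$ odd, and the same decomposition gives $|\GG_F|=4$; finally, a local field of characteristic $2$ has $-1=1\in(F^\times)^2$ tautologically.

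In every case the hypotheses of Corollary~\ref{PRBW-cor} are satisfied and the claimed short exact sequence follows. I expect no real obstacle here: the whole content is bookkeeping against the previously established Corollary~\ref{PRBW-cor}, and the only substantive step is the standard square-class computation $|\GG_F|=4$ for non-archimedean local fields of odd residue characteristic.
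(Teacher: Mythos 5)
Your proposal is correct and follows essentially the same route as the paper: reduce to Corollary~\ref{PRBW-cor} by checking, case by case, that either $-1$ is a square or $|\GG_F|\le 4$ (quadratically closed gives $\GG_F$ trivial, real closed gives $\GG_F=\{1,-1\}$, non-archimedean local fields of odd residue characteristic give $|\GG_F|=4$, and characteristic $2$ gives $-1=1$ a square). The only cosmetic differences are that you separate out the archimedean local fields explicitly and note the residue-field hypothesis is automatic for fields, both of which the paper leaves implicit.
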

\begin{proof}
If $F$ is  quadratically closed, then clearly 
$\GG_F=1$. If $F$ is real closed, then $F^\times\simeq \mu_2(F) \oplus F^{>0}$ and 
$F^{>0}=(F^{>0})^2$. This implies that $\GG_F=\{1,-1\}\simeq \z/2$. Now let $F$ be a 
local field which is not a finite extension of $\q_2$. If $\char(F)=2$, then 
$-1=1\in {F^\times}^2$. If $\char(F)\neq 2$, then the residue field of its valuation ring is of 
odd characteristic. In this case $\GG_F$ has four elements
\cite[Theorem 2.2, Chap. VI]{lam2005}. Therefore the claim follows from Corollary \ref{PRBW-cor}.
\end{proof}

\begin{cor}\label{finite}
Let $\F_q$ be a finite field with $q=p^d$ elements. If $(p-1)d>6$, then we have the exact sequence
\[
0\arr \tors(\widetilde{\mu}(\F_q),\widetilde{\mu}(\F_q)) \arr H_3(\PSL_2(\F_q), \z)\arr \RB(\F_q) \arr 0.
\]
\end{cor}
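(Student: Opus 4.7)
The plan is to observe that Corollary~\ref{finite} is an immediate specialization of Corollary~\ref{PRBW-cor}, so the whole job is to verify the hypotheses of that corollary for $A=\F_q$.

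First, I would note that $\F_q$ is a field, hence a local domain, and its own residue field. So the residue-field condition in Corollary~\ref{PRBW-cor}, namely that the residue field has $p^d$ elements with $(p-1)d>6$, is exactly the hypothesis we are given.

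Next, I would check the condition ``$-1\in\aa^{2}$ or $|\GG_A|\le 4$''. If $p=2$ then $-1=1$ is already a square, so the first alternative holds. If $p$ is odd then $\F_q^\times$ is cyclic of even order $q-1$, so the subgroup of squares has index exactly $2$ in $\F_q^\times$; thus $|\GG_{\F_q}|=2\le 4$, and the second alternative holds. Either way the hypotheses of Corollary~\ref{PRBW-cor} are met.

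Applying Corollary~\ref{PRBW-cor} to $A=\F_q$ yields the desired exact sequence
\[
0\arr \tors(\widetilde{\mu}(\F_q),\widetilde{\mu}(\F_q)) \arr H_3(\PSL_2(\F_q), \z)\arr \RB(\F_q) \arr 0.
\]
There is no real obstacle here beyond this case split on the characteristic; the corollary is essentially a translation of Corollary~\ref{PRBW-cor} into the finite-field setting, made possible because $\GG_{\F_q}$ is automatically very small.
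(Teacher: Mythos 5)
Your proposal is correct and matches the paper's proof, which likewise deduces the corollary from Corollary~\ref{PRBW-cor} by noting that $\GG_{\F_q}$ is small (the paper simply says it has two elements, whereas you split on the characteristic; in either case $|\GG_{\F_q}|\le 2\le 4$, so the hypothesis is verified). No issues.
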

\begin{proof}
Since $\GG_{\F_q}$ has two elements, the claim follows from Corollary \ref{PRBW-cor}.
\end{proof}

\begin{cor}
Let $p$ be either $1$ or a prime number. Then 
\[
\begin{array}{c}
H_3(\PSL_2(\z\pth),\z)\simeq \RB'(\z\pth).
\end{array}
\]
In particular,
for $p=1, 2$ or $3$ we have  $H_3(\PSL_2(\z\pth),\z)\simeq \RB(\z\pth)$.
\end{cor}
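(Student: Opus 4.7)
The plan is to apply Theorem \ref{G<4} to $A:=\z\pth$. The unit group $\aa=\{\pm p^n:n\in\z\}$ equals $\{\pm 1\}$ when $p=1$ and is multiplicatively generated by $\{-1,p\}$ otherwise, so $|\GG_A|\le 4$. Moreover, since $\z\pth\subset \q\subset \R$, the only roots of unity in $A$ are $\pm 1$, so $\mu(A)=\mu_2(A)$ and hence $\widetilde{\mu}(A)=1$ and $\tors(\widetilde{\mu}(A),\widetilde{\mu}(A))=0$.

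It remains to check the two structural hypotheses of Theorem \ref{G<4}. First, that $\z\pth$ is a $\GE_2$-domain: for $p\in\{1,2,3\}$ this is subsumed by Example \ref{exact-11}(4), and for general primes it follows from the classical results on elementary generation of $\SL_2$ over localizations of $\z$. Second, the identification $H_3(\PT(A),\z)\simeq H_3(\PB(A),\z)$ is obtained through Lemma \ref{T-PT} from the corresponding statement $H_k(\Tt(A),\z)\simeq H_k(\Bb(A),\z)$ for $k\leq 3$, which for $\z\pth$ is available via the analysis of the Lyndon/Hochschild–Serre spectral sequence of $\Bb(A)=\Tt(A)\ltimes \Ee_{12}(A)$ carried out in \cite[\S3.2]{hutchinson2017} and \cite[\S3]{B-E--2023}.

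With all hypotheses verified, Theorem \ref{G<4} yields
\[
0\arr \tors(\widetilde{\mu}(A),\widetilde{\mu}(A))\arr H_3(\PSL_2(A),\z)\arr \RB'(A)\arr 0,
\]
and vanishing of the left term gives $H_3(\PSL_2(\z\pth),\z)\simeq \RB'(\z\pth)$. For the sharpening when $p\in\{1,2,3\}$, Example \ref{exact-11}(4) applies and ensures that $X_\bullet(A^2)\arr \z$ is exact in dimension $<2$; by Remark \ref{RPRP'} this forces $\RP_1(A)=\RP_1'(A)$ and hence $\RB(A)=\RB'(A)$, producing the refined isomorphism $H_3(\PSL_2(\z\pth),\z)\simeq \RB(\z\pth)$. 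The chief obstacle is uniform verification of the $\GE_2$ property and of $H_3(\Tt(A),\z)\simeq H_3(\Bb(A),\z)$ across all primes $p$, since Example \ref{exact-11}(4) provides the full package (including exactness of $X_\bullet$) only when $p\in\{1,2,3\}$; for larger primes one must extract these facts separately from the structural results on $\SL_2(\z\pth)$ in the literature.
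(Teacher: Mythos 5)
Your overall strategy (reduce to Theorem \ref{G<4}, note $|\GG_A|\le 4$ and $\widetilde{\mu}(A)=1$, and upgrade $\RB'$ to $\RB$ for $p=1,2,3$ via Example \ref{exact-11}(4) and Remark \ref{RPRP'}) is the same as the paper's, and those parts are fine. But your verification of the hypothesis $H_3(\PT(A),\z)\simeq H_3(\PB(A),\z)$ has a genuine gap: you propose to deduce it from $H_k(\Tt(A),\z)\simeq H_k(\Bb(A),\z)$ for $k\le 3$ via Lemma \ref{T-PT}, citing \cite[\S3.2]{hutchinson2017} and \cite[\S3]{B-E--2023}. Those analyses apply to local rings with sufficiently large residue field, not to $A=\z\pth$, and in fact the statement you want to import is \emph{false} here. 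Writing $\Bb(A)=\Tt(A)\ltimes N(A)$ with $N(A)\simeq A$ and $\Tt(A)\simeq\aa\simeq\z/2\times\z$ acting by $\diag(a,a^{-1})\cdot x=a^2x$, the (split, two-row) Lyndon/Hochschild--Serre spectral sequence gives $H_n(\Bb(A),\z)\simeq H_n(\Tt(A),\z)\oplus H_{n-1}(\Tt(A),A)$. Since $-I_2\in\Tt(A)$ acts trivially on $A$ and $H_j(\langle p\rangle,A)$ is $\z/(p^2-1)$ for $j=0$ and $0$ for $j\ge 1$, one finds $H_{n-1}(\Tt(A),A)\simeq H_{n-1}(\z/2,\z/(p^2-1))$, which equals $\z/2$ in degrees $1$ and $2$ whenever $p$ is odd. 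So $H_2(\Tt(A),\z)\to H_2(\Bb(A),\z)$ and $H_3(\Tt(A),\z)\to H_3(\Bb(A),\z)$ both fail to be isomorphisms for odd $p$ (and the $k=2$ case already fails for $A=\z$, where $H_2(\Tt(\z),\z)=0$ but $H_2(\Bb(\z),\z)\simeq\z/2$). Lemma \ref{T-PT} therefore cannot be invoked.

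The fix, and what the paper actually does, is to work directly with the projective groups: $\PT(A)\simeq\aa/\mu_2(A)\simeq\z$ is torsion-free of cohomological dimension one, the same splitting argument gives $H_n(\PB(A),\z)\simeq H_n(\PT(A),\z)\oplus H_{n-1}(\PT(A),A)$, and now $H_m(\PT(A),A)=0$ for $m\ge 1$ (the $m=1$ term is $\ker(p^2-1:A\to A)=0$ and higher terms vanish for dimension reasons), so $H_3(\PB(A),\z)\simeq H_3(\PT(A),\z)$; the case $p=1$ is immediate since $\PT(\z)=1$ and $\PB(\z)\simeq\z$. One smaller point: your justification of the $\GE_2$ property for general $p$ (``classical results on elementary generation'') should be made precise --- the paper's reason is simply that $\z\pth$ is a euclidean domain, and euclidean domains are $\GE_2$-rings by \cite{cohn1966}.
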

\begin{proof}
It is known that  $A:=\z\pth$ is a $\GE_2$-ring. In fact any euclidean domain is a $\GE_2$-ring 
\cite[\S2]{cohn1966}.
Note that $\z^\times= \mu_2(\z)$ and for a prime $p$,
\[
\begin{array}{c}
\aa=(\z\pth)^\times= \{\pm 1\}\times \lan p\ran\simeq \z/2\times \z.
\end{array}
\]
Thus $\GG_A$ has at most $4$ elements and $\widetilde{\mu}(A)=1$. By Theorem \ref{G<4}, to complete the 
proof, we need to prove that $H_3(\PT(A),\z)\simeq H_3(\PB(A),\z)$.
Since $\PB(\z)=\PN(\z)\simeq\z$, we have 
\[
H_3(\PB(\z),\z)=0=H_3(\PT(\z),\z).
\]
Let $p$ be a prime. With an argument similar to the proof of \cite[Lemma 3.5]{B-E--2023}
we obtain the isomorphism
\[
H_n(\PB(A),\z)\simeq H_n(\PT(A),\z)\oplus H_{n-1}(\PT(A), A),
\]
where $\PT(A)$ acts on $A$ by the formula $D(a).x=a^2x$.
Since $\PT(A)\simeq \z$, we have $H_{m}(\PT(A), A)=0$ for $m\geq 2$. 
Thus 
\[
H_n(\PB(A),\z)\simeq H_n(\PT(A),\z), \ \ \ \text{for $n\geq 3$.}
\]
This implies the first claim. The second claim follows 
from the fact that $X_\bullet(A^2)\arr \z$ is exact in dimension $<2$ for $A=\z$, $\z\half$ and 
$\z[\frac{1}{3}]$ \cite[Examples 6.12, 6.13]{hutchinson2022}.
\end{proof}

\begin{rem}\label{RB(Z)}
(i) There is a bit difference between the refined Bloch group of $\z$ defined in here and in 
\cite{C-H2022}. Let denote the refined Bloch group of $\z$ define in \cite[page 16]{C-H2022} by
$\RB_{\rm ch}(\z)$. Then it was shown that we have the exact sequence
\[
0\arr \z/4 \arr H_3(\SL_2(\z),\z) \arr \RB_{\rm ch}(\z) \arr 0
\]
(see \cite[Theorem 8.16]{C-H2022}), where the left injective map is induce by the 
inclusion $\lan w\ran \harr \SL_2(\z)$. On the other hand, since $H_3(\SL_2(\z),\z)\simeq \z/12$, 
$H_3(\PSL_2(\z),\z)\simeq \z/6$ and the composition
\[
\z/4\simeq H_3(\lan w\ran,\z) \arr H_3(\SL_2(\z),\z) \arr H_3(\PSL_2(\z),\z)
\]
is trivial, we obtain the exact sequence
\[
0\arr \z/4 \arr H_3(\SL_2(\z),\z) \arr H_3(\PSL_2(\z),\z) \arr \z/2 \arr 0.
\]
Now from these and  the above corollary we obtain the exact sequence
\[
0\arr \RB_{\rm ch}(\z) \arr \RB(\z) \arr \z/2 \arr 0.
\]
\par (ii) Since $H_2(\Tt(\z\half),\z)\simeq H_2(\Bb(\z\half),\z)$ \cite[Lemma 8.28]{C-H2022} and 
$H_2(\PT(\z\half),\z)\simeq H_2(\PB(\z\half),\z)$, we have 
\[
\begin{array}{c}
\RB_{\rm ch}(\z\half)\simeq \RB(\z\half)
\end{array}
\]
(see Remark \ref{IA}). Thus by \cite[Proposition 8.31]{C-H2022}, we have the exact sequence
\[
\begin{array}{c}
0\arr \z/4 \arr H_3(\SL_2(\z\half),\z) \arr \RB(\z\half) \arr 0.
\end{array}
\]
The structure of $\RB_{\rm ch}(\z\half)$ has been studied extensively by Coronado and Hutchinson in 
\cite[\S8.4]{C-H2022}.
\par (iii) By a result of \cite[Lemma 4.6]{C-H2022}, there is a natural surjective map 
\[
\begin{array}{c}
H_3(\SL_2(\z\third),\z) \two \RB_{\rm ch}(\z\third).
\end{array}
\]
But the natural map 
\[
\begin{array}{c}
H_3(\SL_2(\z\third),\z) \arr H_3(\PSL_2(\z\third),\z)\simeq \RB(\z\third)
\end{array}
\]
is not surjective. In fact, we have the exact sequence
\[
\begin{array}{c}
H_3(\SL_2(\z\third),\z) \arr H_3(\PSL_2(\z\third),\z) \arr \z/2\arr 0
\end{array}
\]
(see \cite[Proposition 4.1]{B-E-2025}). Thus $\RB_{\rm ch}(\z\third)$ and $\RB(\z\third)$ are not isomorphic.
\end{rem}

\begin{cor}\label{Od}
Let $\OO_d$ be the ring of algebraic integers of the number field $\q(\sqrt{d})$, where 
$d\in \{-1, -2, -3, -7, -11\}$. Then we have the exact sequence
\[
\begin{array}{c}
0\arr\tors(\widetilde{\mu}(\OO_d),\widetilde{\mu}(\OO_d))\arr H_3(\PSL_2(\OO_d),\z)\arr 
\RB'(\OO_d) \arr 0.
\end{array}
\]
In particular, if $d=-1,-3$, then we may replace $\RB'(\OO_d)$ with $\RB(\OO_d)$.
\end{cor}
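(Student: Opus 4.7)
The strategy is to apply Theorem \ref{G<4} to each ring $\OO_d$, with a separate argument identifying $\RB'(\OO_d)$ with $\RB(\OO_d)$ for $d=-1,-3$. For every $d \in \{-1,-2,-3,-7,-11\}$ the ring $\OO_d$ is norm-Euclidean, hence a $\GE_2$-domain by Cohn's theorem \cite{cohn1966}. The unit groups are computed explicitly: $\OO_{-1}^\times = \lan i\ran \cong \z/4$, $\OO_{-3}^\times = \lan\omega\ran \cong \z/6$ with $\omega = (1+\sqrt{-3})/2$, and $\OO_d^\times = \{\pm 1\}$ for $d \in \{-2,-7,-11\}$. In all five cases a direct check gives $|\GG_{\OO_d}| = 2$, so the square-class hypothesis of Theorem \ref{G<4} holds.

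Next, I would verify $H_3(\PT(\OO_d),\z) \simeq H_3(\PB(\OO_d),\z)$. For $d \in \{-2,-7,-11\}$ this is immediate: $\PT(\OO_d) = 1$ since $\OO_d^\times = \mu_2$, and because $\mu_2$ acts trivially on $\OO_d$ via squaring, we have $\PB(\OO_d) \simeq \OO_d$, a free abelian group of rank two, so both $H_3$'s vanish (the first trivially, the second as $\bigwedge_\z^3 \z^2 = 0$). For $d \in \{-1,-3\}$, where $\PT(\OO_d)$ is a nontrivial finite cyclic group of order $2$ or $3$ acting on $\PN(\OO_d) \simeq \OO_d$ via multiplication by $a^2$, I would analyze the Lyndon/Hochschild-Serre spectral sequence of the split extension $1 \arr \PN(\OO_d) \arr \PB(\OO_d) \arr \PT(\OO_d) \arr 1$, using the specific $\PT$-module structure on $\OO_d$ (i.e.\ the action of a root of unity on a ring of cyclotomic integers) to control the contributions at $(p,q) = (2,1), (1,2)$ on the total-degree-$3$ diagonal.

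Once these hypotheses are in place, Theorem \ref{G<4} produces the desired exact sequence
\[
0 \arr \tors(\widetilde{\mu}(\OO_d),\widetilde{\mu}(\OO_d)) \arr H_3(\PSL_2(\OO_d),\z) \arr \RB'(\OO_d) \arr 0.
\]
For $d \in \{-1,-3\}$, the final step is to identify $\RB'(\OO_d)$ with $\RB(\OO_d)$ via Remark \ref{RPRP'}, which reduces to showing that $X_\bullet(\OO_d^2)$ is exact in dimension $1$. By Proposition \ref{GE2C}(ii), this amounts to verifying that ${\rm K}_2(2,\OO_d)/{\rm C}(2,\OO_d)$ is perfect for $\OO_{-1} = \z[i]$ and $\OO_{-3} = \z[\omega]$, a property that should be accessible from the Euclidean structure and known low-dimensional $K$-theory of these rings, in the spirit of Example \ref{exact-11}(4).

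The main obstacle is the verification of $H_3(\PT(\OO_d),\z) \simeq H_3(\PB(\OO_d),\z)$ for $d = -1,-3$. A naive inspection of the Lyndon/Hochschild-Serre $E^2$-page shows several potentially nontrivial terms (from $H_p(\PT, H_q(\PN,\z))$ with $q \ge 1$) on the diagonal $p+q=3$, and one needs either to establish explicit cancellations via differentials, or to adapt the decomposition argument of \cite[Lemma 3.5]{B-E--2023} to the finite-cyclic setting of these cyclotomic rings. Once the cohomological isomorphism is in hand, the rest of the proof is a direct application of the theorems of Section \ref{sec3}.
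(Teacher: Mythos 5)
Your overall strategy is exactly the paper's: verify the hypotheses of Theorem \ref{G<4} for each $\OO_d$, and for $d=-1,-3$ upgrade $\RB'$ to $\RB$ by showing $H_1(X_\bullet(\OO_d^2))=0$. The $\GE_2$ property, the unit groups, $|\GG_{\OO_d}|=2$, and the cases $d=-2,-7,-11$ (where $\PT(\OO_d)=1$ and $\PB(\OO_d)\simeq \OO_d\simeq \z^2$, so both $H_3$'s vanish) are all handled correctly --- in fact more carefully than in the paper's own proof, which asserts $\PB(\OO_d)\simeq\z$ at that point. For the final identification $\RB'(\OO_d)=\RB(\OO_d)$ the paper simply quotes $H_1(X_\bullet(\OO_d^2))=0$ from \cite[Proposition 1.1]{B-E--2023} and Cohn, rather than arguing via perfectness of ${\rm K}_2(2,\OO_d)/{\rm C}(2,\OO_d)$; the two routes are equivalent by Proposition \ref{GE2C}(ii), but yours is not carried out.

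The genuine gap is the one you flag yourself: the hypothesis $H_3(\PT(\OO_d),\z)\simeq H_3(\PB(\OO_d),\z)$ for $d=-1,-3$ is left unverified, and the cancellation you hope to extract from the Lyndon/Hochschild--Serre differentials does not occur. Concretely, $\PB(\OO_{-1})\simeq \OO_{-1}\rtimes \PT(\OO_{-1})$, where the generator of $\PT(\OO_{-1})\simeq\z/2$ acts on $\OO_{-1}=\z[i]\simeq\z^2$ by multiplication by $i^2=-1$. In the associated spectral sequence $E^2_{p,q}=H_p\big(\z/2,H_q(\z^2,\z)\big)$ one finds $E^2_{2,1}=H_2(\z/2,\z^2)\simeq(\z/2)^2$ (sign action); the only differential out of this position lands in $E^2_{0,2}\simeq\z$ and hence vanishes, and the only differential into it comes from $E^2_{4,0}=H_4(\z/2,\z)=0$, so $(\z/2)^2$ survives to $E^\infty_{2,1}$. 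Since the extension splits, $E^\infty_{3,0}=H_3(\PT(\OO_{-1}),\z)=\z/2$ already exhausts the image of $H_3(\PT)$, so $H_3(\PB(\OO_{-1}),\z)$ is strictly larger than $H_3(\PT(\OO_{-1}),\z)$. The same happens for $d=-3$: the generator of $\PT(\OO_{-3})\simeq\z/3$ acts on $\z[\zeta_3]$ by multiplication by the primitive cube root $\zeta_3$, and $E^2_{2,1}=H_2(\z/3,\z[\zeta_3])\simeq\z[\zeta_3]/(\zeta_3-1)\simeq\z/3$ survives for the same reason. (The paper closes this step by asserting $\PB(\OO_{-1})\simeq\z\times\z/2$ and $\PB(\OO_{-3})\simeq\z\times\z/3$; since $\PB(\OO_d)$ contains the additive group $\OO_d\simeq\z^2$, those identifications cannot be correct, so your caution here is well founded rather than a mere technicality.) As it stands, then, neither your spectral-sequence analysis nor an adaptation of the decomposition of \cite[Lemma 3.5]{B-E--2023} will deliver the hypothesis of Theorem \ref{G<4} for $d=-1,-3$; those two cases require either a genuinely different argument or a weaker conclusion that accounts for the nonzero $H_3(\PB(\OO_d),\PT(\OO_d);\z)$.
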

\begin{proof}
It is known that $\OO_{-1}^\times$ has $4$ elements, $\OO_{-3}^\times$ has $6$ elements and 
$\OO_{d}^\times=\{1,-1\}$ otherwise. Thus $|\GG_{\OO_d}|\leq 2$. Moreover, if
$d\neq-1,-3$, then $\PB(\OO_d)\simeq \z$ and thus 
\[
H_3(\PB(\OO_d),\z)=0=H_3(\PT(\OO_d),\z). 
\]
For $d=-1,-3$, we have
\[
\PB(\OO_{-1})\simeq \z \times \z/2, \ \ \ \  \PT(\OO_{-1})\simeq \z/2, 
\]
\[
\PB(\OO_{-3})\simeq \z \times \z/3, \ \ \ \ \PT(\OO_{-3})\simeq \z/3.
\]
Therefore 
\[
H_3(\PB(\OO_d),\z)\simeq H_3(\PT(\OO_d),\z).
\]
Now the exact sequence follows from Theorem~\ref{G<4}. For the last claim, note that for 
$A=\OO_{-1}$ and $A=\OO_{-3}$, $H_1(X_\bullet(A^2))=0$ (see \cite[Proposition~1.1]{B-E--2023} 
and \cite[Theorem 5.2, \S6]{cohn1966}). It follows from this that $\RP'(A)=\RP(A)$ (Remark 
\ref{RPRP'}) an thus $\RB'(A)=\RB(A)$.
\end{proof}

\begin{exa}
For $d=-1.-2,-3,-7,-11$, the homology groups of $\OO_d$ have been calculated by Schwermer and Vogtmann 
in \cite{S-V1983}. In particular, they showed that
\[
H_3(\PSL_2(\OO_d),\z)\simeq 
\begin{cases}
\z/6\times (\z/2)^3  & \text{if $d=-1$} \\
\z/6\times \z/2      & \text{if $d=-2$} \\
\z/6 \times \z/3     & \text{if $d=-3$}\\
\z/6 & \text{if $d=-7,-11$}
\end{cases}
\]
(see \cite[Theorems 5.5, 5.3, 5.7, 5.9, 5.11]{S-V1983}). Thus by Corollary \ref{Od},
\[
\RB'(\OO_d)\simeq 
\begin{cases}
\z/6\times (\z/2)^2  & \text{if $d=-1$} \\
\z/6\times \z/2      & \text{if $d=-2$} \\
\z/6     & \text{if $d=-3,-7,-11$.}
\end{cases}
\]
\end{exa}

\begin{exa}
Let $A=\OO_d$ be the ring of algebraic integers of $\q(\sqrt{d})$, where $d$ is a positive square free integer.
Then $A^\times=\{\pm u^n\}\simeq\z/2\times\z$, where
$u$ is a fundamental units of $A$. Therefore $|\GG_A|=4$ and
\[
\PT(A)\simeq \z.
\]
Since $\aa$ is infinite, $A$ is a $\GE_2$-ring \cite[page 321, Theorem]{vas1972}. Let study $H_3(\PB(A),\z)$.
Consider the extension 
\[
1 \arr \PN(A) \arr \PB(A) \arr \PT(A)\arr 1.
\]
Since $\PN(A)\simeq \z\times \z$, $H_n(\PN(A), \z)=0$ for $n\geq 3$ and 
\[
H_2(\PN(A), \z) \simeq \PN(A)\wedge \PN(A) \simeq \z.
\]
Now easy analysis of the Lyndon/Hochschild-Serre spectral sequence of the above extension implies that 
\[
H_3(\PB(A),\z)\simeq H_1(\PT(A), H_2(\PN(A),\z))=(\PN(A)\wedge \PN(A))^{\PT(A)}\simeq (A\wedge A)^{\PT(A)}
\]
(see \cite[Example 6.1.4]{weibel1994}). Let $x= a+b\sqrt{d}$ and  $y=a'+b'\sqrt{d}$. Then
\[
x\wedge y=(a+b\sqrt{d})\wedge (a'+b'\sqrt{d})=(ab'-a'b)(1\wedge \sqrt{d}).
\]
Now if $u=r+s\sqrt{d}\in \aa$, then $u^2=(r^2+s^2d)+2rs\sqrt{d}=r'+s'\sqrt{d}$. Thus we have 
\[
D(u).(x\wedge y)=(ab'-a'b)(u^2\wedge u^2\sqrt{d})=(ab'-a'b)((r'+s'\sqrt{d}) \wedge (s'd+r'\sqrt{d}))
\]
\[
=(ab'-a'b)(r'^2-s'^2d)(1\wedge \sqrt{d})=(ab'-a'b)(1\wedge \sqrt{d})=x\wedge y.
\]
This shows that $\PT(A)$ acts trivially on $A\wedge A$ and thus $H_3(\PB(A),\z)\simeq A\wedge A\simeq \z$. 
Now by an easy analysis of the main  spectral sequence we obtain the exact sequence
\[
\z \arr H_3(\PSL_2(\OO_d), \z)\arr \RB'(\OO_d) \arr 0.
\]
\end{exa}

\section{Homological description of the refined scissors congruence group}\label{sec4}

Let $\SM_2(A)$ denotes the group of monomial matrices in $\SL_2(A)$. Thus $\SM_2(A)$ 
consists of matrices 
$\begin{pmatrix}
a & 0\\
0 & a^{-1}
\end{pmatrix}$
and 
$\begin{pmatrix}
0 & a\\
-a^{-1} & 0
\end{pmatrix}$,
where $a\in \aa$. Let
\[
\PSM_2(A):=\SM_2(A)/\mu_2(A) I_2.
\]
For any $a\in\aa$, let
\[
\psi_1(a):=({\pmb \infty}, {\pmb 0}, {\pmb a})+({\pmb 0}, {\pmb \infty}, {\pmb a})
-({\pmb \infty}, {\pmb 0}, {\pmb 1})-({\pmb 0}, {\pmb \infty}, {\pmb 1}) \in \RP(A).
\]
Then 
\[
\lambda(\psi_1(a))=p_{-1}^+\Lan a\Ran,
\]
where $p_{-1}^+:=\lan-1\ran +1$ and $\Lan a\Ran:=\lan a\ran-1$ are elements of $\z[\GG_A]$. 
For more on $\psi_1(a)$ see \cite[\S3.2]{C-H2022}, \cite{hutchinson2013}, \cite{hutchinson2017} 
and \cite{hmm2022}.

\begin{thm}\label{general-exa}
Assume that $A$ satisfies the condition that $X_\bullet(A^2)\arr \z$ is exact in dimension $<2$.
If $\mu_2(A)=\{\pm 1\}$ and $H_i(\PT(A), \z)\simeq H_i(\PB(A),\z)$ for $i=2,3$, then we have the 
exact sequence
\[
H_3(\PSM_2(A),\mathbb{Z})\arr H_3(\PSL_2(A),\mathbb{Z}) \arr \RB(A)/\z[\GG_A]\psi_1(-1)\arr 0.
\]
\end{thm}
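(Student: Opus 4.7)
The plan is to analyse the main spectral sequence
\[
E^1_{p,q}=H_q(\PSL_2(A),L_p(A^2))\Longrightarrow H_{p+q}(\PSL_2(A),\z)
\]
from Section~\ref{sec2} and to track how much of $H_3(\PSL_2(A),\z)$ is captured by $H_3(\PSM_2(A),\z)$. Since $X_\bullet(A^2)$ is exact in dimensions $<2$, Remark~\ref{RPRP'} gives $\RP_1(A)=\RP_1'(A)$ and $\RB(A)=\RB'(A)$. By Lemma~\ref{d21} and Lemma~\ref{coincide1}, together with the vanishing $E^1_{p,q}=0$ for $p\ge 3$ (so no differential enters $E^r_{2,1}$ for $r\ge 3$), we conclude $E^\infty_{2,1}=E^3_{2,1}=\RB(A)$. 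Thus $H_3(\PSL_2(A),\z)$ carries a filtration $0\se F_0\se F_1\se F_2=H_3(\PSL_2(A),\z)$ with $F_0=E^\infty_{0,3}$, $F_1/F_0=E^\infty_{1,2}$ and $F_2/F_1=\RB(A)$.

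The first step is to show $F_1\se\im(H_3(\PSM_2(A),\z)\arr H_3(\PSL_2(A),\z))$. For this I would run the analogous spectral sequence for $\PSM_2(A)$ acting on $L_\bullet(A^2)$; since the exactness of $L_\bullet(A^2)$ is intrinsic to the complex, this abuts to $H_\bullet(\PSM_2(A),\z)$. The inclusion $\PSM_2(A)\harr\PSL_2(A)$ induces a morphism of spectral sequences, and at $E^1$ the $\PSM_2(A)$-orbit of $({\pmb\infty})$ (respectively of $({\pmb\infty},{\pmb 0})$) has stabilizer $\PT(A)$, so the $\PSM_2$ side contains $H_3(\PT(A),\z)$ at bidegree $(0,3)$ and $H_2(\PT(A),\z)$ at bidegree $(1,2)$. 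Using $H_3(\PT(A),\z)\simeq H_3(\PB(A),\z)$, this morphism surjects onto $E^1_{0,3}$ of the main spectral sequence and hence onto its subquotient $E^\infty_{0,3}$; analogously it hits all of $E^1_{1,2}=H_2(\PT(A),\z)$, and therefore also $E^\infty_{1,2}$. A diagram chase then yields $F_1\se\im(H_3(\PSM_2(A),\z))$.

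The crucial step is to identify the image of $H_3(\PSM_2(A),\z)$ in the top quotient $E^\infty_{2,1}=\RB(A)$ as the $\z[\GG_A]$-submodule $\lan\psi_1(-1)\ran$. The key identity is
\[
\psi_1(-1)=(1-w)\bigl[({\pmb\infty},{\pmb 0},{\pmb{-1}})+({\pmb 0},{\pmb\infty},{\pmb{-1}})\bigr],
\]
so $\psi_1(-1)$ arises from a chain supported on the $\PSM_2$-orbit of cells fixed (up to sign) by $w$. Mirroring the constructions in the proofs of Lemmas~\ref{surj1} and~\ref{wedge3}, I would build an explicit $3$-cycle $\alpha\in B_3(\PSM_2(A))\otimes_{\PSM_2(A)}\z$ and trace its image through the double complex $B_\bullet(\PSL_2(A))\otimes_{\PSL_2(A)}L_\bullet(A^2)$ via the zigzag $B_3\otimes X_0\larr B_3\otimes X_1\larr B_2\otimes Z_1$, verifying that its class in $E^\infty_{2,1}$ equals $\psi_1(-1)$. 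Since conjugation by $\PGL_2(A)$ preserves $\PSM_2(A)$ (it normalises both $\PT(A)$ and the monomial coset $w\PT(A)$), the image of $H_3(\PSM_2(A),\z)$ in $\RB(A)$ is automatically a $\z[\GG_A]$-submodule and hence contains $\z[\GG_A]\cdot\psi_1(-1)$.

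For the reverse inclusion, I would analyse $H_3(\PSM_2(A),\z)$ directly through the Lyndon--Hochschild--Serre spectral sequence of the split extension $1\arr\PT(A)\arr\PSM_2(A)\arr\z/2\arr 1$, in which the generator acts on $\PT(A)$ by inversion (hence trivially on $\bigwedge_\z^2\PT(A)\simeq H_2(\PT(A),\z)$). The resulting description of $H_3(\PSM_2(A),\z)$ in terms of $H_\bullet(\PT(A),\z)$ and the canonical $\z/2$-classes, combined with the morphism to the main spectral sequence, should show that any class whose image survives in $E^\infty_{2,1}=\RB(A)$ must be a $\z[\GG_A]$-multiple of $\psi_1(-1)$. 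The main obstacle is the third step: constructing the cycle $\alpha$ explicitly and carrying out the zigzag bookkeeping without the hypothesis $-1\in\aa^2$ that greatly simplified the analogous calculations in Lemmas~\ref{surj1} and~\ref{wedge3}.
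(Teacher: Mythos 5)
Your overall architecture matches the paper's: compare the main spectral sequence with one for $\PSM_2(A)$, use $H_i(\PT(A),\z)\simeq H_i(\PB(A),\z)$ to show the lower filtration step $F_1$ is covered by $H_3(\PSM_2(A),\z)$, and identify the cokernel with $\RB(A)$ modulo the image of the monomial contribution in $E^\infty_{2,1}$. But the heart of the theorem is the exact identification of that image with $\z[\GG_A]\psi_1(-1)$, and this is where your proposal has a genuine gap. Your third step (an explicit $3$-cycle $\alpha\in B_3(\PSM_2(A))\otimes_{\PSM_2(A)}\z$ hitting $\psi_1(-1)$ after the zigzag) would at best give the inclusion $\z[\GG_A]\psi_1(-1)\se\im$, and is not actually constructed; your fourth step, which must supply the reverse inclusion $\im\se\z[\GG_A]\psi_1(-1)$, is only the assertion that a Lyndon--Hochschild--Serre analysis of $1\arr\PT(A)\arr\PSM_2(A)\arr\z/2\arr 1$ ``should show'' it. That reverse inclusion does not follow from knowing $H_3(\PSM_2(A),\z)$ as an abstract group: you would still have to trace every class through the zigzag of the double complex, which is exactly the bookkeeping you flag as the obstacle.

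The missing idea that makes this tractable in the paper is to replace $L_\bullet(A^2)$, viewed over $\PSM_2(A)$, by its subcomplex $\hat{L}_\bullet$ spanned by the $\PSM_2(A)$-orbits of $({\pmb\infty})$ and $({\pmb\infty},{\pmb 0})$. Its degree-two kernel $\hat{Z}_1(A^2)=\z\{({\pmb\infty},{\pmb 0})+({\pmb 0},{\pmb\infty})\}$ is the \emph{trivial} $\PSM_2(A)$-module $\z$, so the entire $p=2$ column of the monomial spectral sequence is $H_q(\PSM_2(A),\z)$, the $d^1$ out of it is the transfer, and $\hat{E}^\infty_{2,1}\simeq H_1(\PSM_2(A),\z)\simeq \z/2\oplus\widetilde{\GG}_A$. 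The image in $E^2_{2,1}=\RP_1(A)$ is then computed on this small set of generators: $\overline{w}\mapsto\psi_1(-1)$ and $\lan a\ran\mapsto\psi_1(a^2)=\Lan a\Ran\psi_1(-1)$, which yields \emph{both} inclusions at once and identifies the image as $\z[\GG_A]\psi_1(-1)$. (Your identity $\psi_1(-1)=(1-w)\bigl[({\pmb\infty},{\pmb 0},{\pmb{-1}})+({\pmb 0},{\pmb\infty},{\pmb{-1}})\bigr]$ is correct and is morally why $\overline{w}$ maps to $\psi_1(-1)$, but it is used at the level of $H_1$ of the monomial group, not via a hand-built $3$-cycle.) By contrast, with the full $L_\bullet(A^2)$ over $\PSM_2(A)$ your $(2,q)$-entries are $H_q(\PSM_2(A),Z_1(A^2))$, which are not computable in this way. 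A second, minor point: surjectivity of $\hat{E}^\infty_{1,2}\arr E^\infty_{1,2}$ needs more than surjectivity at $E^1$, since $(1,2)$ is not in the zeroth column; one must also check compatibility of the $d^1_{1,2}$'s (both vanish here), which your ``diagram chase'' elides.
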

\begin{proof}
Let $\hat{X}_0(A^2)$ and $\hat{X}_1(A^2)$ be the free $\z$-modules generated by the sets
\[
\PSM_2(A)({\pmb \infty}):=\{(g{\pmb\infty}):g\in \PSM_2(A)\}
\]
and
\[
\PSM_2(A)({\pmb\infty}, {\pmb 0}):=\{(g{\pmb \infty}, g{\pmb 0}):g\in \PSM_2(A)\},
\]
respectively. It is easy to see that
\[
\hat{X}_1(A^2)  \overset{\hat{\partial}_1}{\arr} \hat{X}_0(A^2) \overset{\hat{\epsilon}}{\arr} \z \arr 0,
\]
is an exact sequence of $\PSM_2(A)$-modules, where $\hat{\partial}_1$ and $\hat{\epsilon}$ are defined 
similar to ${\partial}_1$ and $\epsilon$ discussed in Section \ref{sec1}, respectively. Moreover, 
$\hat{Z}_1(A^2):=\ker(\hat{\partial}_1)$ 
is equal to $\z\{({\pmb\infty},{\pmb 0})+({\pmb 0},{\pmb \infty})\}$. Observe that $\hat{Z}_1(A^2)\simeq\z$ 
and the action of $\PSM_2(A)$ on it is trivial. From the complex
\begin{equation}\label{comp2}
0 \arr\hat{Z}_1(A^2)\overset{\widehat{\inc}}{\arr} \hat{X}_1(A^2)\overset{\hat{\partial}_1}{\arr}
\hat{X}_0(A^2)\arr 0,
\end{equation}
we obtain the first quadrant spectral sequence
\[
\hat{E}^1_{p.q}=\left\{\begin{array}{ll}
H_q(\PSM_2(A),\hat{X}_p(A^2)) & p=0,1\\
H_q(\PSM_2(A),\hat{Z}_1(A^2)) & p=2\\
0, & p>2
\end{array}
\right.
\Longrightarrow H_{p+q}(\PSM_2(A),\z).
\]
The  complex (\ref{comp2}) is a $\PSM_2(A)$-subcomplex of (\ref{comp1}). This inclusion 
induces a natural morphism of spectral sequences
\begin{equation}\label{seqspmorph}
\begin{tikzcd}
\hat{E}^1_{p,q} \ar[d, "\inc_\ast"] \ar[r,Rightarrow] & H_{p+q}(\PSM_2(A),\z)\ar[d, "\inc_\ast"]\\
E^1_{p,q} \ar[r,Rightarrow] & H_{p+q}(\PSL_2(A),\z).
\end{tikzcd}
\end{equation}
As in case of $\PSL_2(A)$, we have
\[
\hat{X}_0(A^2)\simeq \Ind _{\PT(A)}^{\PSM_2(A)}\z, \ \ \ \ \ 
\hat{X}_1(A^2)\simeq \Ind _{\PT(A)}^{\PSM_2(A)}\z.
\]
Thus  by Shapiro's lemma we have
\[
\hat{E}_{0,q}^1 \simeq H_q(\PT(A),\z), \ \ \ \ \
\hat{E}_{1,q}^1 \simeq H_q(\PT(A),\z).
\]
Moreover, $\hat{d}_{1, q}^1=H_q(\hat{\sigma}) - H_q(\widehat{\inc})$, where $\hat{\sigma}(D(a))=wD(a)w^{-1}=D(a)^{-1}$. Thus
$\hat{d}_{1,0}^1$ is trivial, $\hat{d}_{1,1}^1$ is induced by the map $D(a)\mt D(a)^{-2}$ and 
$\hat{d}_{1,2}^1$ is trivial (see the paragraph above Lemma \ref{d21}). As in Lemma \ref{d21} we can show that $\hat{d}_{2,1}^1$ 
is trivial. Moreover, $\hat{d}_{1,3}^1$ can be studied similar to $d_{1,3}^1$ (see the paragraph after Lemma \ref{H3A}).

A direct calculation shows that the map $\hat{d}_{2,q}^1: H_q(\PSM_2(A),\z) \arr H_q(\PT(A),\z)$ 
is the transfer map \cite[\S 9, Chap. III]{brown1994}. Hence the composite
\[
H_q(\PSM_2(A),\z)\overset{\hat{d}_{2,q}^1}{\arr} H_q(\PT(A),\z)\overset{\inc_\ast}{\arr}H_q(\PSM_2(A),\z)
\]
coincides with multiplication by 2 \cite[Proposition 9.5, Chap. III]{brown1994}. In particular, 
$\hat{d}_{2,0}^1:\z \arr \z $ is multiplication by 2. This shows that 
\[
H_1(\PSM_2(A),\z)\simeq \z/2 \oplus \widetilde{\GG}_A,
\]
where $\widetilde{\GG}_A=\GG_A/\mu_2(A)$. If fact, from the extension
$1 \arr \PT(A) \arr \PSM_2(A) \arr \lan \overline{w}\ran \arr 1$, we obtain the split exact sequence 
\[
1 \arr \widetilde{\GG}_A \arr H_1(\PSM_2(A),\z) \arr \lan \overline{w}\ran \arr 1.
\]
Observe that $w^2=
\overline{\begin{pmatrix}
-1&0\\
0&-1
\end{pmatrix}}=1 \in \PT(A)$. 
Again a direct calculation shows that 
\[
\hat{d}_{2,1}^2: H_1(\PSM_2(A),\z)\arr H_2(\PT(A),\z)\simeq \PT(A) \wedge \PT(A)
\]
is trivial. In fact,
$\hat{d}_{2,1}^2|_{\widetilde{\GG}_A}$ is given by $\overline{\lan a\ran} \mapsto D(a)\wedge (\overline{-I_2})=0$ and  
$\hat{d}_{2,1}^2(\overline{w})=0$. Therefore from the spectral sequence $\hat{E}_{p,q}^ 1$
we obtain the isomorphism 
\[
H_2(\PSM_2(A),\z) \simeq \PT(A) \wedge \PT(A)\simeq \displaystyle\frac{\aa \wedge \aa}{\aa \wedge \mu_2(A)}.
\]
It follows from this that the image of the map
$\hat{d}_{2,2}^1:H_2(\PSM_2(A),\z)  \arr \PT(A)\wedge \PT(A)$
is $2 (\PT(A)\wedge \PT(A))$. Thus
\[
\hat{E}_{1,2}^2 \simeq \frac{\PT(A) \wedge \PT(A)}{2(\PT(A)\wedge \PT(A))}\simeq
\frac{\GG_A \wedge \GG_A}{\mu_2(A)\wedge \GG_A} , \ \ \ \hat{E}_{2,2}^2 
\simeq{}_2(\PT(A) \wedge \PT(A)).
\]
A direct calculation shows that the image of the map 
\[
\tilde{\GG}_A\oplus\langle\overline{\omega}\rangle\simeq H_1(\PSM_2(A),\z)\to 
H_1(\PSL_2(A),Z_1(A^2))\simeq \RP_1(A)
\]
is $\langle\psi_1(a^2),\psi_1(-1)\mid a\in A^{\times}\rangle$.

The morphism (\ref{seqspmorph}) induces morphism of filtrations
\[
\begin{array}{cccccccc}
0\se & \hat{F}_0 & \se &\hat{F}_1   &\se &\hat{F}_2   &\se & \hat{F}_3=H_3(\PSM_2(A),\z)\\
&\downarrow &      &\downarrow &    & \downarrow &    & \downarrow\\
0\se &    F_0    &\se   &  F_1      &\se & F_2        &\se & F_3 =H_3(\PSL_2(A),\z),
\end{array}
\]
where $E_{p,3-p}^\infty=F_p/F_{p-1}$  and $\hat{E}_{p,3-p}^\infty=\hat{F}_p/\hat{F}_{p-1}$.
Clearly $F_2=F_3$ and $\hat{F}_2=\hat{F}_3$. Consider the following commutative 
diagram with exact rows 
\begin{equation}\label{diagram}
\begin{tikzcd}
0 \ar[r] & \hat{F}_1 \ar[r] \ar[d] & H_3(\PSM_2(A),\z) \ar[r] \ar[d,"\inc_\ast "] 
& \hat{E}^{\infty}_{2,1} \ar[r] \ar[d] & 0 \\
0 \ar[r] & F_1 \ar[r] & H_3(\PSL_2(A),\z) \ar[r] & E^{\infty}_{2,1} \ar[r] & 0.
\end{tikzcd}
\end{equation}
We have seen that $\hat{E}^{\infty}_{2,1}\simeq  H_1(\PSM_2,\z)$ and $E^{\infty}_{2,1}\simeq\RB(A)$.
Now consider the commutative diagram with exact rows
\[
\begin{tikzcd}
0 \ar[r] & \hat{F}_0 \ar[r] \ar[d] & \hat{F}_1 \ar[r] \ar[d] & \hat{E}^{\infty}_{1,2} \ar[r] \ar[d] & 0 \\
0 \ar[r] & F_0 \ar[r] & F_1 \ar[r] & E^{\infty}_{1,2} \ar[r] & 0.
\end{tikzcd}
\]
Since $\hat{E}^1_{0,3}\simeq E^1_{0,3} $, the natural map  $\hat{F}_0 \arr F_0$ is surjective. Moreover, 
since $\hat{E}^1_{1,2}\simeq E^1_{1,2} $, the map $\hat{E}^{\infty}_{1,2}\arr E^{\infty}_{1,2}$ is 
surjective. These imply that the map $\hat{F}_1\arr F_1$ is surjective. Now  from the diagram (\ref{diagram})
we obtain the exact sequence
\[
H_3(\PSM_2(A),\mathbb{Z})\arr H_3(\PSL_2(A),\mathbb{Z}) \arr 
\frac{\mathcal{RB}(A)}{\langle\psi_1(a^2),\psi_1(-1)\mid a\in A^{\times}\rangle}\arr 0.
\]
Note that $\psi_1(a^2)=\Lan a\Ran \psi_1(-1)=\lan a\ran \psi_1(-1)-\psi_1(-1)$ (see \cite[Lemma 3.21]{C-H2022}). Thus
\[
\Big\langle\psi_1(a^2),\psi_1(-1)\mid a\in A^{\times}\Big\rangle
=\Big\lan \lan a\ran \psi_1(-1)\mid a \in \aa\Big\ran=\z[\GG_A]\psi_1(-1).
\]
This completes the proof of the theorem.

\end{proof}

\begin{thm}\label{general-exa-1}
Let $A$ satisfies the conditions of Theorem $\ref{general-exa}$. Then we have the isomorphism
\[
H_3(\PSL_2(A),\PSM_2(A);\z)\simeq \RP_1(A)/\z[\GG_A]\psi_1(-1).
\]
\end{thm}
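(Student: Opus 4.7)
The plan is to extract the desired isomorphism from the long exact sequence of the pair $(\PSL_2(A),\PSM_2(A))$ in integral homology, using Theorem~\ref{general-exa} to describe the cokernel of $\inc_\ast$ in degree~$3$ and the morphism of spectral sequences~\eqref{seqspmorph} to describe its kernel in degree~$2$. Splicing in Theorem~\ref{general-exa}, the long exact sequence yields the short exact sequence
\[
0 \arr \frac{\RB(A)}{\z[\GG_A]\psi_1(-1)} \arr H_3(\PSL_2(A),\PSM_2(A);\z) \arr K \arr 0,
\]
where $K:=\ker\bigl(\inc_\ast\colon H_2(\PSM_2(A),\z)\arr H_2(\PSL_2(A),\z)\bigr)$.

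On the other hand, since $\RB(A)=\ker(\lambda_1)$ and $\psi_1(-1)\in\RB(A)$, one has the natural short exact sequence
\[
0 \arr \frac{\RB(A)}{\z[\GG_A]\psi_1(-1)} \arr \frac{\RP_1(A)}{\z[\GG_A]\psi_1(-1)} \overset{\overline{\lambda_1}}{\arr} \lambda_1\bigl(\RP_1(A)\bigr) \arr 0.
\]
By the Five Lemma, the theorem reduces to producing a natural isomorphism $K\simeq\lambda_1(\RP_1(A))$ compatible with the connecting map of the pair and with $\overline{\lambda_1}$.

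The identification of $K$ is obtained by comparing the two hyperhomology spectral sequences from~\eqref{seqspmorph} in total degree~$2$. The computations in the proof of Theorem~\ref{general-exa} give $\hat{E}^\infty_{2,0}=0$ and reassemble the remaining pieces $\hat{E}^\infty_{1,1}$ and $\hat{E}^\infty_{0,2}$ into $H_2(\PSM_2(A),\z)\simeq \PT(A)\wedge\PT(A)$; under the hypothesis $H_2(\PT(A),\z)\simeq H_2(\PB(A),\z)$, both induced maps $\hat{E}^\infty_{1,1}\arr E^\infty_{1,1}\simeq\widetilde{\mu}_4(A)$ and $\hat{E}^\infty_{0,2}\arr E^\infty_{0,2}\simeq H_2(\PB(A),\z)/\lambda_1(\RP_1(A))$ turn out to be surjective. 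The kernel $K$ therefore concentrates in the part of $\hat{E}^\infty_{0,2}$ that is sent to the image of $d^2_{2,1}=\lambda_1$ (Lemma~\ref{coincide1}); a diagram chase through the short exact sequence $0\arr Z_1(A^2)\arr X_1(A^2)\arr B_0(A^2)\arr 0$ and the filtrations of the two $H_2$'s identifies this part with $\lambda_1(\RP_1(A))/\lambda_1(\z[\GG_A]\psi_1(-1))=\lambda_1(\RP_1(A))$. The main obstacle is this last identification: isolating the contribution of $\z[\GG_A]\psi_1(-1)$ in the filtration of $H_2(\PSM_2(A),\z)$ and matching it with the corresponding contribution in the filtration of $H_2(\PSL_2(A),\z)$ requires careful bookkeeping, analogous to but one homological degree higher than the identification of the image of $H_1(\PSM_2(A),\z)$ in $\RP_1(A)$ carried out in the proof of Theorem~\ref{general-exa}.
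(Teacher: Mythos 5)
Your route is genuinely different from the paper's: the paper never touches the long exact sequence of the pair in ordinary homology, but instead builds a \emph{relative} hyperhomology spectral sequence $\Eb_{p,q}^1$ from the inclusion of the complex $\hat{X}_\bullet$ into $X_\bullet$ (via \cite[Proposition 7.1]{B-E--2023}); there $\Eb_{1,q}^1=0$ and $\Eb_{0,q}^1\simeq H_q(\PB(A),\PT(A);\z)$ vanishes for $q=2,3$, so $H_3(\PSL_2(A),\PSM_2(A);\z)\simeq \Eb_{2,1}^2=\coker\bigl(H_1(\PSM_2(A),\z)\to \RP_1(A)\bigr)$ in one step, with no extension problem. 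Your identification of the two ends of your first short exact sequence is essentially correct: granting the computations in the proof of Theorem \ref{general-exa}, the graded pieces $\hat{E}^\infty_{1,1}\to E^\infty_{1,1}$ and $\hat{E}^\infty_{2,0}\to E^\infty_{2,0}$ are injective, so $K=\ker\bigl(\inc_*\colon H_2(\PSM_2(A),\z)\to H_2(\PSL_2(A),\z)\bigr)$ is concentrated in $\ker\bigl(\hat{E}^\infty_{0,2}\to E^\infty_{0,2}\bigr)\simeq\lambda_1(\RP_1(A))$, as you claim.

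The genuine gap is the final step. You have produced two short exact sequences
\[
0 \arr \RB(A)/\z[\GG_A]\psi_1(-1) \arr \RP_1(A)/\z[\GG_A]\psi_1(-1) \arr \lambda_1(\RP_1(A)) \arr 0
\]
and
\[
0 \arr \RB(A)/\z[\GG_A]\psi_1(-1) \arr H_3(\PSL_2(A),\PSM_2(A);\z) \arr K \arr 0
\]
with isomorphic outer terms, but the Five Lemma cannot be invoked until you exhibit a \emph{map} between the middle terms making the resulting diagram commute -- and no such map is constructed anywhere in your argument. Two extensions of the same groups need not be isomorphic (compare $\z/4$ and $\z/2\oplus\z/2$ as extensions of $\z/2$ by $\z/2$), so without that comparison morphism the argument only pins down $H_3(\PSL_2(A),\PSM_2(A);\z)$ up to an unresolved extension class. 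The phrase ``requires careful bookkeeping'' is precisely where the proof is missing: the natural candidate for the middle map is the edge homomorphism $H_3(\PSL_2(A),\PSM_2(A);\z)\to \Eb_{2,1}^2\simeq\RP_1(A)/\z[\GG_A]\psi_1(-1)$ of the relative spectral sequence, but constructing and analyzing that map is exactly the content of the paper's proof, so at this point your argument either stops short or collapses into the paper's.
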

\begin{proof}
From \cite[Proposition 7.1]{B-E--2023} and the morphism of complexes
\[
\begin{tikzcd}
0\ar[r]&\hat{Z}_1(A^2) \ar[r]\ar[d]&\hat{X}_1(A^2)\ar[r]\ar[d]&\hat{X}_0(A^2)\ar[r]\ar[d] &0\\
0 \ar[r] & Z_1(A^2)  \ar[r]  & X_1(A^2)  \ar[r]& X_0(A^2) \ar[r] & 0,
\end{tikzcd}
\]
we obtain the first quadrant spectral sequence
\[
\Eb_{p,q}^ 1\!=\!\!
\begin{cases}
H_q(\PSL_2(A), \PSM_2(A); X_p(A^2), \hat{X}_p(A^2)) & \text{$p=0,1$}\\
H_q(\PSL_2(A), \PSM_2(A); Z_1(A^2), \hat{Z}_1(A^2))  & \text{$p=2$}\\
0 & \text{$p>2$}
\end{cases}
\!\!\!\!\Rightarrow\! H_{p+q}(\PSL_2(A), \PSM_2(A); \z).
\]
We refer the readers to \cite[\S7]{B-E--2023} for the notation involved in the above spectral sequence.
Consider the long exact sequence
\[
\cdots\! \arr\! H_q(\PSM_2(A), \hat{X}_p(A^2))\! \arr\! H_q(\PSL_2(A),X_p(A^2))
\!\arr\! \Eb_{p,q}^1 \!\arr\! H_{q-1}(\PSM_2(A), \hat{X}_p(A^2)) 
\]
\[
\arr H_{q-1}(\PSL_2(A),X_p(A^2)) \arr \cdots.
\]
Since
\[
H_q(\PSL_2(A), X_0(A^2))\simeq H_q(\PB(A),\z), \ \ \ H_q(\PSL_2(A), X_1(A^2))\simeq H_q(\PT(A),\z),
\] 
\[
H_q(\PSM_2(A), \hat{X}_0(A^2))\simeq H_q(\PT(A),\z), \ \ \ H_q(\PSM_2(A), \hat{X}_1(A^2))\simeq H_q(\PT(A),\z),
\]
from the above exact sequence, for any $q$, we get 
\[
\Eb_{0,q}^1\simeq H_q(\PB(A),\PT(A);\z), \ \ \ \ \Eb_{1,q}^1=0.
\]
Therefore
\[
\Eb_{0,q}^2\simeq H_q(\PB(A),\PT(A);\z), \ \ \ \ \Eb_{1,q}^2=0, \ \ \ \ \Eb_{2,q}^2\simeq\Eb_{2,q}^1.
\]
Now by an easy analysis of the above spectral sequence we get the exact sequence
\[
\hspace{-3.5cm}
\cdots \!\arr\! H_{n+2}(\PSL_2(A),\! \PSM_2(A); \z) \!\arr\! \Eb_{2,n}^2\!\! \arr\! H_{n+1}(\PB(A),\!\PT(A);\z)\! 
\]
\[
\hspace{1.5cm}
\arr \!H_{n+1}(\PSL_2(A),\! \PSM_2(A); \z)\arr \Eb_{2,n-1}^2 \arr H_{n}(\PB(A),\PT(A);\z)\arr \cdots.
\]
Since $H_{2}(\PB(A),\PT(A);\z)=0=H_{3}(\PB(A),\PT(A);\z)$, we have
\[
\Eb_{2,1}^2\simeq  H_{3}(\PSL_2(A), \PSM_2(A); \z).
\]
On the other hand, we have the exact sequence
\[
H_1(\PSM_2(A), \z) \arr H_1(\PSL_2(A), Z_1(A^2)) \arr  \Eb_{2,1}^2 \arr 0.
\]
From these results we obtain the isomorphism
\[
H_3(\PSL_2(A), \PSM_2(A); \z)\simeq 
\frac{\mathcal{RP}_1(A)}{\langle\psi_1(a^2),\psi_1(-1)\mid a\in A^{\times}\rangle}.
\]
In the proof of the previous theorem, we showed that 
\[
\Big\langle\psi_1(a^2),\psi_1(-1)\mid a\in A^{\times}\Big\rangle=\z[\GG_A]\psi_1(-1).
\]
This completes the proof of the theorem.
\end{proof}

\begin{cor}\label{cor-last}
Let $A$ be a local domain such that its residue field either is infinite or has $p^d$ elements, 
where $(p-1)d>6$. Then we have the exact sequence
\[
H_3(\PSM_2(A),\mathbb{Z})\arr H_3(\PSL_2(A),\mathbb{Z}) \arr \RB(A)/\z[\GG_A]\psi_1(-1)\arr 0.
\]
Moreover,
\[
H_3(\PSL_2(A),\PSM_2(A);\z)\simeq \RP_1(A)/\z[\GG_A]\psi_1(-1).
\]
\end{cor}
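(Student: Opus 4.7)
The plan is to deduce the corollary directly from Theorems \ref{general-exa} and \ref{general-exa-1} by verifying their three standing hypotheses for the local domain $A$ in question. The theorems require: (i) $X_\bullet(A^2)\arr \z$ is exact in dimension $<2$; (ii) $\mu_2(A)=\{\pm 1\}$; and (iii) $H_i(\PT(A),\z)\simeq H_i(\PB(A),\z)$ for $i=2,3$. Once these are in place, Theorem~\ref{general-exa} yields the desired exact sequence and Theorem~\ref{general-exa-1} yields the isomorphism $H_3(\PSL_2(A),\PSM_2(A);\z)\simeq \RP_1(A)/\z[\GG_A]\psi_1(-1)$.

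Verifying (i) is immediate: by Example~\ref{exact-11}(1), every local ring has $X_\bullet(A^2)\arr \z$ exact in dimension less than $2$. For (ii), since $A$ is a domain, the equation $x^2=1$ forces $(x-1)(x+1)=0$ and hence $x=\pm 1$, so $\mu_2(A)=\{\pm 1\}$.

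The main step is (iii). The analogous comparison $H_k(\Tt(A),\z)\simeq H_k(\Bb(A),\z)$ for $k\leq 3$ between the standard Borel and diagonal torus in $\SL_2(A)$ holds under the given residue-field hypothesis by \cite[Proposition~3.19]{hutchinson2017}; this is precisely the same input that was invoked in Corollary~\ref{PRBW-cor}. Lemma~\ref{T-PT} then transfers this comparison to the projective versions $\PT(A)\se \PB(A)$, giving $H_k(\PT(A),\z)\simeq H_k(\PB(A),\z)$ for $k\leq 3$, which in particular covers the cases $i=2,3$ needed here.

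With (i)--(iii) verified, the two statements of the corollary are immediate applications of Theorems~\ref{general-exa} and~\ref{general-exa-1}. The substantive content lies entirely in those theorems; the only real issue at the level of this corollary is checking that $(p-1)d>6$ (or the infinite residue-field hypothesis) is exactly what \cite[Proposition~3.19]{hutchinson2017} demands, so the deduction is routine once that citation is in hand.
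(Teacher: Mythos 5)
Your proposal is correct and follows exactly the route the paper intends: the corollary is a direct application of Theorems \ref{general-exa} and \ref{general-exa-1}, with the three hypotheses checked precisely as you do — Example \ref{exact-11}(1) for the exactness of $X_\bullet(A^2)\arr\z$ in dimension $<2$, the domain hypothesis for $\mu_2(A)=\{\pm 1\}$, and \cite[Proposition~3.19]{hutchinson2017} combined with Lemma \ref{T-PT} for $H_i(\PT(A),\z)\simeq H_i(\PB(A),\z)$, $i=2,3$, just as in Corollary \ref{PRBW-cor}.
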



\end{document}